\definecolor{bleudefrance}{rgb}{0.19, 0.55, 0.91}
\definecolor{ao(english)}{rgb}{0.0, 0.5, 0.0}
\newcommand{\addcite}[0]{\ifthenelse{\boolean{showcomments}}
{\textcolor{purple}{(add cite(s)) }}{}}%
\newcommand{\you}[1]{  \ifthenelse{\boolean{showcomments}}
% {\todo[inline,color=bleudefrance]{Enrique: #1}}{}}
{{\color{blue} PCY: #1}}{}}
\newcommand{\youmargin}[1]{\ifthenelse{\boolean{showcomments}}{\marginpar{\color{bleudefrance}\tiny PCY: #1}}{}}
\newcommand{\ayou}[1]{
\ifthenelse{\boolean{showedits}}
{\added[id=PCY]{#1}}
{\!#1\hspace{-4.75pt}}
}
\newcommand{\ryou}[2]{
\ifthenelse{\boolean{showedits}}
{\replaced[id=PCY]{#1}{#2}}
{\!#1\hspace{-4.75pt}}
}
\newcommand{\dyou}[1]{
\ifthenelse{\boolean{showedits}}
{\deleted[id=PCY]{#1}}
{}
}
\newcommand{\hl}[1]{\ifthenelse{\boolean{showcomments}}
{\textcolor{purple}{#1}}{}}%
\newcommand{\aem}[1]{
\ifthenelse{\boolean{showedits}}
{\added[id=EM]{#1}}
{\!#1\hspace{-4.75pt}}
}
\newcommand{\rem}[2]{
\ifthenelse{\boolean{showedits}}
{\replaced[id=EM]{#1}{#2}}
{\!#1\hspace{-4.75pt}}
}
\newcommand{\dem}[1]{
\ifthenelse{\boolean{showedits}}
{\deleted[id=EM]{#1}}
{}
}
\newcommand{\enrique}[1]{  \ifthenelse{\boolean{showcomments}}
{\todo[inline,caption={}]{Enrique: #1}}
{}
}
\def\ba{\begin{align}}
	\def\ea{\end{align}}
\newcommand{\beq}{\begin{equation}}
	\newcommand{\eeq}{\end{equation}}
\newcommand{\bq}{\begin{eqnarray}}
	\newcommand{\eq}{\end{eqnarray}}
\newcommand{\bqn}{\begin{eqnarray*}}
	\newcommand{\eqn}{\end{eqnarray*}}
\newcommand{\bee}{\begin{enumerate}}
	\newcommand{\eee}{\end{enumerate}}
\newcommand{\bi}{\begin{itemize}}
	\newcommand{\ei}{\end{itemize}}
\newcommand{\bseq}{\begin{subequations}}
	\newcommand{\eseq}{\end{subequations}}
\newcommand{\PreserveBackslash}[1]{\let\temp=\\#1\let\\=\temp}
\newcolumntype{C}[1]{>{\PreserveBackslash\centering}p{#1}}
\newcolumntype{R}[1]{>{\PreserveBackslash\raggedleft}p{#1}}
\newcolumntype{L}[1]{>{\PreserveBackslash\raggedright}p{#1}}
\newcommand{\R}{\mathbb{R}}
\newcommand{\norm}[1]{\|#1\|}
\newtheorem{remark}{Remark}
\newtheorem{theorem}{{Theorem}}%[section]
\newtheorem{lemma}[theorem]{{Lemma}}
\newtheorem{definition}{{Definition}}
\newtheorem{corollary}[theorem]{{Corollary}}
\newtheorem{proposition}[theorem]{{Proposition}}
\newtheorem{assumption}{{Assumption}}
\newcommand{\yingzhu}[1]{  \ifthenelse{\boolean{showcomments}}
% {\todo[inline,color=bleudefrance]{Enrique: #1}}{}}
{{\color{cyan} Yingzhu: #1}}{}}
\begin{document}

\title{A Unified Analysis of Saddle Flow Dynamics: Stability and Algorithm Design}

\author{Pengcheng~You,~\IEEEmembership{Member,~IEEE},~Yingzhu~Liu,~\IEEEmembership{Student~Member,~IEEE},~and~Enrique~Mallada,~\IEEEmembership{Senior~Member,~IEEE}% <-this % stops a space
\thanks{}
\thanks{}
\thanks{}
\thanks{P. You and Y. Liu are with the College of Engineering, Peking University, Beijing, China (email: \{pcyou,yzliucoe\}@pku.edu.cn).}
\thanks{E. Mallada is with the Department of Electrical and Computer Engineering, Johns Hopkins University, Baltimore, MD 21218, USA (email: mallada@jhu.edu).}% <-this % stops a space
\thanks{A preliminary version of this work was presented in \cite{ym2021acc}.}
}

% \makeatletter
% \g@addto@macro\maketitle{\thispagestyle{plain}}
% \makeatother

\IEEEoverridecommandlockouts

% make the title area
\maketitle

%EM: To show page numbers.
\thispagestyle{plain} 
\pagestyle{plain}

\begin{abstract}
%% Summary less that 1200 characters
This work examines the conditions for asymptotic and exponential convergence of saddle flow dynamics of convex-concave functions. First, we propose an observability-based certificate for asymptotic convergence, directly bridging the gap between the invariant set in a LaSalle argument and the equilibrium set of saddle flows. This certificate generalizes conventional conditions for convergence, e.g., strict convexity-concavity, and leads to a novel state-augmentation method that requires minimal assumptions for asymptotic convergence. We also show that global exponential stability follows from strong convexity-strong concavity, providing a lower-bound estimate of the convergence rate. This insight also explains the convergence of proximal saddle flows for strongly convex-concave objective functions. Our results generalize to dynamics with projections on the vector field and have applications in solving constrained convex optimization via primal-dual methods. Based on these insights, we study four algorithms built upon different Lagrangian function transformations. We validate our work by applying these methods to solve a network flow optimization and a Lasso regression problem.
\end{abstract}

\begin{IEEEkeywords}
Saddle flow dynamics, stability of nonlinear systems, optimization algorithms, nonlinear systems, optimization
\end{IEEEkeywords}

\IEEEpeerreviewmaketitle

\section{Introduction}

Studying optimization algorithms from a continuous time dynamical-systems viewpoint has become an insightful technique in the analysis of algorithms, providing alternative means to understand their stability~\cite{holding2020stability_a,holding2020stability_b}, rate of convergence~\cite{wibisono2016variational,francca2021gradient,mohammadi2020robustness}, and robustness~\cite{mohammadi2020robustness,lessard2016analysis,richert2015robust,cherukuri2017role}.
For example, in the basic case of gradient descent dynamics for unconstrained convex optimization, the objective function monotonically decreases along trajectories towards the optimum, naturally rendering a Lyapunov function~\cite{khalil2002nonlinear}. Such realization leads to multiple extensions, including finite-time convergence~\cite{cortes2006finite,garg2020fixed}, acceleration~\cite{mohammadi2020robustness,francca2021gradient}, and time-varying optimization~\cite{rahili2016distributed,fazlyab2017prediction,zheng2020implicit}, and in many cases it can be used to help understand the convergence of the discrete-time counterparts~\cite{lu2020mathprog}. 

%%%%%%%%%%%%%%%%%%%%%%%%%%%%%%%%%%%%%%%%%%%%%%%
% 2. Saddle-flows are a prominent example, with many applications.
One prominent problem in this field is the study of saddle flow dynamics, i.e., dynamics that move in the gradient descent direction for a sub-set of variables and the gradient ascent direction for the complement. Designed for locating min-max saddle points, saddle flow dynamics, or saddle flows, are particularly suited
for solving constrained optimization problems via primal-dual methods~\cite{cherukuri2016asymptotic} and finding Nash equilibria of zero-sum games via gradient play~\cite{gharesifard2013distributed}, which have led to a broad application spectrum, including power systems~\cite{zhao2014design,mzl2017tac}, communication networks~\cite{chiang2007layering,pm2009ton}, and cloud computing~\cite{goldsztajn2019proximal}.
As a result, understanding the properties that affect the behavior of these dynamics has been a major focus of study.

%%%%%%%%%%%%%%%%%%%%%%%%%%%%%%%%%%%%%%%%%%%%%%%
% \subsubsection*{Related Work}
The seminal work~\cite{arrow1958studies} first explored the asymptotic convergence of primal-dual dynamics using first principles. 
Since then, emerging advanced analytical tools have been used to re-validate conventional conditions or unveil new, weaker ones. 
%%% weaker conditions
%Furthermore, recent studies have demonstrated that asymptotic convergence can be achieved under weaker conditions. 
%%%%%%% convexity-related conditions
A major set of conditions are related to the convexity properties of objective functions, including \cite{arrow1958studies}.
For instance,~\cite{cherukuri2016asymptotic} revisits the strict convexity-concavity condition in the case of discontinuous vector fields, using LaSalle's invariance principle for discontinuous Caratheodory systems. Besides, weaker conditions have been discovered, such as local strong convexity-concavity~\cite{cherukuri2017role}, convexity-linearity, or strong quasiconvexity-quasiconcavity~\cite{cherukuri2017saddle}. 
%%%%%%% regularization method
Regularization serves as an alternative to circumventing the above conditions. This includes various forms of penalty terms on equality/inequality constraints to handle the Lagrangian of constrained optimization~\cite{cherukuri2017distributed,richert2015robust}, as well as the proximal methods~\cite{goldsztajn2019proximal, Goldsztajn2020proximal}. 
%%%%%%% Gap
Despite the merit of regularization that often relaxes conditions for convergence, the extra penalty terms may introduce couplings that require additional computation and communication overheads in distributed implementation.

% Examples of exponential convergence
More recently, the research focus has shifted toward the exponential stability of saddle flows and characterizing the convergence rate.
%%%%%%%%%%%%%%%%%%%%%%%%%%%%%%%%%%%%%%%%%%%%%%%
%%% Examples of exponential convergence on PD
This has been made possible through diverse conditions and techniques. For instance, the saddle flow dynamics of Lagrangian functions augmented with specific projection are proven exponentially stable in~\cite{Cortes2019Distributeda}. However, the convergence rate estimate is not provided therein.
Instead, \cite{Qu2019Exponentialb} proposes a projection-free design of dynamics for the Lagrangian of affine inequality-constrained convex programs. It is followed up by~\cite{Tang2020Semiglobal} that further accounts for convex inequality constraints with semi-global exponential stability guarantee -- the convergence rate is estimated to be dependent on initial points.
Besides, \cite{dhingra2018proximal, Ding2018Exponentially, Ding2020Global} use the Moreau envelope of Lagrangian functions as a proxy and validate the exponential stability of the resulting proximal primal-dual gradient flows using integral quadratic constraints. 
However, with this method, accurately estimating the convergence rate is difficult, thus leading to other alternatives, including contraction \cite{Nguyen2018Contraction, Cisneros-Velarde2022Contraction}, Riemannian geometric frameworks \cite{Bansode2019Exponential}, and two-timescale approaches \cite{Ozaslan2023Tighta}.
%% Gap
It shall be noted that most existing results are restricted to primal-dual dynamics of Lagrangian functions, and remain highly case-specific.
Thus, the conditions required for the exponential convergence of general saddle flow dynamics and any convenient methods to estimate the convergence rate are still not fully understood.

In this paper, we develop a unified analysis for saddle flow dynamics that renders general conditions for asymptotic and exponential convergence. Our results hold for, but are not limited to, Lagrangian functions derived from optimization problems and can readily be extended for scenarios where the saddle flow is projected onto a closed convex polyhedron. The novel analysis provides a unified framework to explain the convergence of several existing algorithms in the literature and offers valuable insights for developing new ones. We showcase several new algorithms that converge under weaker conditions and/or exhibit superior performance. Specifically, the paper makes the following contributions to the literature:
\begin{itemize}[leftmargin=1em, labelsep=.5em, itemindent=0em, itemsep=2pt]
% \begin{itemize}[label=\roman*.,leftmargin=1em, labelsep=.5em, itemindent=1em, itemsep=2pt]
    \item \emph{Asymptotic Convergence Condition:} We provide a general certificate for the asymptotic convergence of saddle flow dynamics. The proposed certificate, which is based on an observability condition, directly connects the equilibrium set with the invariant set that is derived from applying LaSalle's invariance principle and can be shown to generalize existing asymptotic convergence results.
    \item \emph{Universal Algorithm for Convex-Concave Saddle Problems:} We further leverage our condition for asymptotic convergence to design a novel saddle flow algorithm, based on state augmentation, that is able to guarantee convergence to \emph{some} saddle point, depending on the initial condition, under minimal assumptions on convexity-concavity. In particular, our algorithm is able to converge even for bilinear functions.
    \item \emph{Exponential Convergence Condition:} We show that a strongly convex-strongly concave objective function (with respective constants $\mu$ and $q$) is sufficient for saddle flows' exponential convergence and that a lower bound $c=\min\{\mu,q\}$ on the rate of convergence can be readily established. We further show how proximal regularization on a strongly convex-concave function can achieve exponential convergence by ``shifting" some of the strong convexity, making the resulting function also strongly concave. 
    \item \emph{Projected Saddle Flows:} Our analysis on both asymptotic and exponential convergence can be readily extended to the case where saddle flow dynamics are projected so as to constrain the trajectories within a closed convex polyhedron. This is achieved without requiring any strengthening of the conditions for convergence of the unconstrained case.
    \item \emph{Applications to Constrained Convex Optimization:} Finally, we bring together all our theoretical development and analyze a wide variety of primal-dual algorithms for constrained convex optimization, new and existing ones, that lead to a novel augmented primal-dual algorithm to solve linear programs, a novel bound on the convergence rate of proximal primal-dual algorithms, a novel preconditioned primal-dual algorithm that outperforms the proximal methods for strongly convex problems with affine constraints, and a novel reduced primal-dual algorithm that further exploits separable problem structures and exponentially converges with a potentially even faster rate than the preconditioned case.
    %under weaker conditions than those typically required by the literature.
\end{itemize}
The versatility of the analysis and new algorithms are validated with numerical illustrations, showcasing the solution to a network flow optimization problem using our novel primal-dual LP algorithm and the solution to a Lasso regression problem via a combination of our novel preconditioning method with proximal regularization.

%%%%%%%%%%%%%%%%%%%%%%%%%%%%%%%%%%%%%%%%%%%%%%%
% \subsubsection*{Paper Organization}
% 6. The rest of the paper is org. as follows.
\subsubsection*{Organization}
The remainder of the paper is organized as follows.
Section~\ref{sec:saddle-flows} introduces the problem formulation with basic definitions and assumptions, followed by the main results on the asymptotic convergence of saddle flow dynamics in Section~\ref{sec:asymp_convergece}. 
The exponential stability analysis of saddle flow dynamics is demonstrated in Section~\ref{sec:exponential convergence}.
We further generalize the results to the projected version in Section~\ref{sec:proj_saddle_flow_dynamics}, with
 various applications in algorithm designs for constrained convex optimization in Section~\ref{sec:applications}.
Section~\ref{sec:simulation} provides simulation validations and Section~\ref{sec:conclusion} concludes.

\subsubsection*{Contributions w.r.t. \cite{ym2021acc}}
A preliminary version of this work appeared in \cite{ym2021acc}. This paper extends those results in many ways. First, our asymptotic convergence of the proximal regularization method in Section \ref{sec:asymp_convergece} is extended to general convex-concave saddle functions. The exponential convergence analysis in Section \ref{sec:exponential convergence}, and its extension to projected saddle flows in Section \ref{ssec:exp_convergence_projected_saddle}, is entirely novel, as well as its application to (a) characterizing conditions for the exponential convergence of proximal primal-dual algorithms (Section \ref{ssec:proximal_primal_dual}), and (b) deriving two new algorithms, the preconditioned primal-dual (Section \ref{ssec:exp_convergence_regularized_pdd}) and the reduced primal-dual (Section \ref{sec:reduced_primal_dual}), with guaranteed exponential convergence. Finally, the numerical validation in Section \ref{sec:simulation} is also a new contribution.

\subsubsection*{Notation}
Let~$\mathbb{R}$ and~$\mathbb{R}_{\ge 0}$ be the sets of real and non-negative real numbers, respectively.
$I_n\in\mathbb{R}^{n\times n}$ denotes the identity matrix of size~$n$.
Given two vectors~$x,y\in\R^n$,~$x_i$ and~$y_i$ denote their~$i^{\rm th}$ entries, respectively; and~$x \le y$ holds if~$x_i \le y_i$ holds for~$\forall i$. 
Given a continuously differentiable function~$S(x,y)\in\mathcal C^1$ with~$S:\R^n\times \R^m\mapsto \R$, we use~${\partial_x}S(x,y)\in \R^{1\times n}$ and~${\partial_y}S(x,y)\in \R^{1\times m}$ to denote the partial derivatives with respect to~$x$ and~$y$, respectively, and define~$\nabla_x S(x,y):=\left[\partial_xS(x,y)\right]^T$. 
%while ~$\partial^2_{xy} S(x,y)$ represents taking the partial derivative of~$S(x,y)$ with respect to~$y$ first, then with respect to~$x$. 
We further use $\partial^2_{xy}S(x,y):=\partial_x \left(\nabla_y S(x,y)\right)$ to denote the corresponding second-order partial derivative.

% -----------------------------------------------------------------------------------------
\section{Problem Formulation}\label{sec:saddle-flows}
% General definitions of S(x,y), convex-concave functions and the saddle point
We consider a function~$S:\mathcal{D}\mapsto\mathbb{R}$ with~$\mathcal{D}=\mathcal{X}\times\mathcal{Y}$, where both~$\mathcal{X}\subseteq\mathbb{R}^n$ and~$\mathcal{Y}\subseteq\mathbb{R}^m$ are convex sets. 
Our goal is to study different dynamic laws that seek to converge to some saddle point~$(x_\star, y_\star)$ of~$S(x,y)$.
While in general, such questions could be asked in a setting without any further restrictions, neither the existence of saddle points nor convergence towards them is easy to guarantee.
For this paper, we focus our attention on functions~$S(x,y)$ that are \emph{convex-concave}.
\begin{definition}[Convex-Concave Functions]\label{def:convex-concave}
$S(x,y)$ is convex-concave, if~$S(\cdot,y)$ is convex for~$\forall y\in \mathcal{Y}$ and~$S(x,\cdot)$ is concave for~$\forall x\in\mathcal{X}$.
$S(x,y)$ is strictly convex-concave, if $S(x,y)$ is convex-concave, and further either~$S(\cdot,y)$ is strictly convex for~$\forall y\in\mathcal{Y}$ or~$S(x,\cdot)$ is strictly concave for~$\forall x\in\mathcal{X}$.
\end{definition}
In this case, a general definition of a saddle point of $S(x,y)$ is given as follows:
\begin{definition}[Saddle Point]\label{def:saddle-point}
A point~$(x_\star,y_\star)\in\mathcal{D}$ is a saddle point of a convex-concave function~$S(x,y)$ if 
\begin{equation}\label{eq:saddle-inequality}
   S(x_\star,y)\leq S(x_\star,y_\star) \leq S(x,y_\star)\,
\end{equation}
holds for~$\forall x\in\mathcal{X}$ and~$\forall y\in\mathcal{Y}$.
\end{definition}
Due to the convexity-concavity of~$S(x,y)$, we are specifically interested in minimizing~$S(x,y)$ over~$x$ and meanwhile maximizing~$S(x,y)$ over~$y$.
Throughout this work, we will assume that~$S(x,y)$ is continuously differentiable, i.e.,~$S(x,y)\in \mathcal C^1$, as formally summarized below.

\begin{assumption}\label{ass:paper-assumption}
$S(x,y)$ is convex-concave, continuously differentiable, and there exists at least one saddle point~$(x_\star,y_\star)\in\mathcal{D}$ satisfying~\eqref{eq:saddle-inequality}.
\end{assumption}

The continuous differentiability in Assumption~\ref{ass:paper-assumption} is introduced to simplify the exposition. 
It does not significantly limit the scope of the results as one can always derive (albeit possibly increased computational complexity) a continuously differentiable surrogate of a continuous convex-concave function by means of the Moreau Envelope~\cite{parikh2014proximal}.

%\subsection{Saddle Flow Dynamics}

Given a convex-concave function~$S(x,y)$ satisfying Assumption~\ref{ass:paper-assumption}, we refer to the following dynamic law
\begin{subequations}\label{eq:saddle-flow}
\begin{align}
    \dot x &= -\,\nabla_x S(x,y)\,,\label{eq:p-saddle-flow}\\
    \dot y &= +\,\nabla_y S(x,y)\,, \label{eq:d-saddle-flow}
\end{align}
\end{subequations}
as the saddle flow dynamics of~$S(x,y)$. 
Basically the dynamic law \eqref{eq:saddle-flow} drives the system %towards such stationary points 
in gradient descent and ascent directions for~$x$ and~$y$, respectively. 
% Notice that, the equilibrium of~\eqref{eq:saddle-flow} may not be located in~$\mathcal{D}$. 

%Next, we suppose that such a point~$(x_\star,y_\star)$ does exist in the feasible set.
Suppose the saddle flow~\eqref{eq:saddle-flow} does converge to an equilibrium point $(x_\star,y_\star)\in\mathcal{D}$. It has to be a stationary point of $S(x,y)$, defined by
% \begin{assumption}\label{ass:stationary-point-assumption}
%     There exists at least an equilibrium point of~\eqref{eq:saddle-flow} in the domain $\mathcal{X}\times\mathcal{Y}$.
% \end{assumption}
%Under Assumption~\ref{ass:stationary-point-assumption}, we further define stationary points.
\begin{definition}[Stationary Point]\label{def:stationary-point}
A point~$(x_\star,y_\star)\in\mathcal{D}$ is a stationary point of a function~$S(x,y)$ if 
\begin{equation}\label{eq:saddle-critical}
   \left\{
   \begin{aligned}
   \nabla_x S(x_\star,y_\star) & =0 \\
   \nabla_y S(x_\star,y_\star) & =0 
   \end{aligned} \right.
\end{equation}
holds.
%Only when $S(x,y)$ is not convex-concave need we that $S(x_\star,y_\star)$ is not a local extremum.
\end{definition}
% The connection between saddle points and stationary points.
\begin{remark}
    In the case of a convex-concave function $S(x,y)$, any stationary point must be a saddle point:
   \begin{IEEEeqnarray*}{rCl}
       \nabla_x S(x_\star,y_\star) & =0 \ \Rightarrow \ S(x_\star,y_\star)\leq S(x,y_\star), & \ \forall x\in\mathcal{X} \, ,\\
   \nabla_y S(x_\star,y_\star) & =0 \ \Rightarrow \ S(x_\star,y)\leq S(x_\star,y_\star), &\ \forall y\in\mathcal{Y} \, .
   \end{IEEEeqnarray*}
    However, the converse does not necessarily hold. %without Assumption~\ref{ass:stationary-point-assumption}.
\end{remark}

We will mainly work with the standard form \eqref{eq:saddle-flow} of saddle flow dynamics to locate a saddle point of~$S(x,y)$.
In the following Sections~\ref{sec:asymp_convergece} and \ref{sec:exponential convergence}, we first consider a special case where the feasible domain of $S(x,y)$ is full space, i.e.,~$\mathcal{D}=\mathbb{R}^n\times\mathbb{R}^m$, to derive our main results. 
In this case Assumption~\ref{ass:paper-assumption} implies that any saddle point $(x_\star,y_\star)\in\mathbb{R}^n\times \mathbb{R}^m$ is a stationary point and satisfies \eqref{eq:saddle-critical}.
Then we move on to relax the restriction and introduce a solution using a projection on the vector field to handle more general cases in Section~\ref{sec:proj_saddle_flow_dynamics}. % the extension of projected version over the feasible set.

%%%%%%%%%%%%%%%%%%%%%%%%%%%%%%%%%%%%%%%%%%%%%%%%%%%%%%%%%%%%%%%%%%%%%%%%%%%%%%%%

% -----------------------------------------------------------------------------------------
\section{Asymptotic Convergence}\label{sec:asymp_convergece}

This section presents an observable certificate for a convex-concave function $S(x,y)$ that, if exists, ensures the asymptotic convergence of the saddle flow dynamics~\eqref{eq:saddle-flow} towards a saddle point.
We show that such certificates exist in the cases of two conventional sufficient conditions -- strict convexity-concavity and proximal regularization -- and are thus weaker.
We further build on this certificate to develop a state-augmentation method that entails minimal convexity-concavity requirements on~$S(x,y)$ for saddle flow dynamics to converge to a saddle point asymptotically.

\subsection{Observable Certificates}\label{ssec:general_principle}

We first define the proposed observable certificate for $S(x,y)$.
%for the saddle flow dynamics~\eqref{eq:saddle-flow} to asymptotically converge to a saddle point of~$S(x,y)$.
\begin{definition}[Observable Certificate]
\label{def:certificate}
A function~$h(x,y)$ with~$h:\mathbb{R}^n \times \mathbb{R}^m \mapsto \mathbb{R}_{\ge0}^2$ is an observable certificate of~$S(x,y)$, if there exists a saddle point~$(x_\star,y_\star)$ such that
\begin{equation}\label{eq:bounded-auxiliary-function}
    \begin{bmatrix}
    S(x_\star,y_\star) - S(x_\star,y)\\
    S(x,y_\star) - S(x_\star,y_\star) 
    \end{bmatrix}\ge h(x,y) \ge 0 
\end{equation}
holds and for any trajectory~$(x(t),y(t))$ of~\eqref{eq:saddle-flow} that satisfies~$h(x(t),y(t)) \equiv 0$, we have~$\dot x \equiv 0,~ \dot y \equiv 0$.
\end{definition}
%\begin{remark}
We call~$h(x,y)$ an observable certificate~\cite{khalil2002nonlinear}, due to the second property of Definition~\ref{def:certificate}, which is akin to~\eqref{eq:saddle-flow} having~$h(x,y)$ as an observable output.
%It is exactly this observability property that will allow us to connect invariant sets with saddle-points.
%\end{remark}
Our first main result is that the existence of an observable certificate suffices to guarantee the asymptotic convergence of the saddle flow dynamics \eqref{eq:saddle-flow}, as we formally state below.
\begin{assumption}~\label{ass:auxiliary-function}
$S(x,y)$ has an observable certificate~$h(x,y)$ as given by Definition~\ref{def:certificate}.
\end{assumption}
%Under this assumption, asymptotic convergence of the saddle flow dynamics~\eqref{eq:saddle-flow} is formally stated below.
\begin{theorem}\label{th:general-principle}
Let Assumptions~\ref{ass:paper-assumption} and~\ref{ass:auxiliary-function} hold. Then the saddle flow dynamics~\eqref{eq:saddle-flow} globally asymptotically converge to some saddle point~$(x_\star,y_\star)$ of~$S(x,y)$. 
\end{theorem}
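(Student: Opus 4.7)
The plan is to apply LaSalle's invariance principle using the classical quadratic Lyapunov candidate
\[
V(x,y) := \tfrac{1}{2}\|x-x_\star\|^2 + \tfrac{1}{2}\|y-y_\star\|^2,
\]
anchored at the saddle point $(x_\star,y_\star)$ furnished by Assumption~\ref{ass:auxiliary-function}. Differentiating $V$ along trajectories of \eqref{eq:saddle-flow} and invoking the first-order characterizations of convexity in $x$ and concavity in $y$, namely $(x-x_\star)^T\nabla_x S(x,y)\ge S(x,y)-S(x_\star,y)$ and $(y-y_\star)^T\nabla_y S(x,y)\le S(x,y)-S(x,y_\star)$, yields
\[
\dot V \;\le\; \bigl[S(x_\star,y)-S(x_\star,y_\star)\bigr] + \bigl[S(x_\star,y_\star)-S(x,y_\star)\bigr].
\]
Applying the certificate bound \eqref{eq:bounded-auxiliary-function} entry-wise to the two brackets then gives $\dot V \le -h_1(x,y)-h_2(x,y)\le 0$, so the sublevel sets of $V$ are forward-invariant and compact, and every trajectory is bounded.

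Next, LaSalle's invariance principle implies that every trajectory approaches the largest invariant set $M$ contained in $\{(x,y):h(x,y)=0\}$. On $M$ the observability property of Definition~\ref{def:certificate} forces $\dot x\equiv\dot y\equiv 0$, so $M$ consists entirely of equilibria of \eqref{eq:saddle-flow}. These are stationary points of $S(x,y)$ and, by the Remark following Definition~\ref{def:stationary-point}, are saddle points.

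The final and most delicate step is to promote this set-convergence into convergence to a single saddle point. The standard device is to pick an $\omega$-limit point $(\tilde x,\tilde y)$ of the trajectory: it lies in $M$ and is therefore itself a saddle point of $S(x,y)$. Since the preceding Lyapunov computation only used the saddle-point inequalities at $(x_\star,y_\star)$, one may re-run it verbatim with $(\tilde x,\tilde y)$ in place of $(x_\star,y_\star)$, so the shifted candidate $V_{(\tilde x,\tilde y)}(x,y):=\tfrac{1}{2}\|x-\tilde x\|^2+\tfrac{1}{2}\|y-\tilde y\|^2$ is also non-increasing along trajectories. By definition of $\omega$-limit point there is a sub-sequence of times $t_k\to\infty$ with $V_{(\tilde x,\tilde y)}(x(t_k),y(t_k))\to 0$, and monotonicity in $t$ then upgrades this to $V_{(\tilde x,\tilde y)}(x(t),y(t))\to 0$, yielding $(x(t),y(t))\to(\tilde x,\tilde y)$.

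I expect the main obstacle to be precisely this $\omega$-limit upgrade, since LaSalle alone delivers only convergence to a set rather than to a point; the key observation that makes it work is that the certificate inequality \eqref{eq:bounded-auxiliary-function} and the convex-concave structure together allow $V$ to be re-anchored at \emph{any} saddle point, which is what lets a single $\omega$-limit point attract the whole trajectory. A secondary technical care is to confirm that $h$ being a two-component non-negative vector is used correctly, so that both brackets in the bound on $\dot V$ are separately non-positive.
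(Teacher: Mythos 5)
Your proposal is correct and follows essentially the same route as the paper: the quadratic Lyapunov function anchored at the certificate's saddle point, LaSalle's invariance principle combined with the observability property to place the invariant set inside the equilibrium (hence saddle) set, and the re-anchoring of $V$ at an $\omega$-limit point plus monotonicity to upgrade set-convergence to point-convergence. The only cosmetic difference is that you fold the certificate bound directly into the estimate $\dot V \le -h_1 - h_2$, whereas the paper keeps $\dot V \le 0$ via the saddle inequalities and invokes \eqref{eq:bounded-auxiliary-function} only when identifying the invariant set.
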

\begin{proof}
The proof follows from applying LaSalle's invariance principle~\cite{khalil2002nonlinear} to the following candidate Lyapunov function
\begin{equation}\label{eq:Vxy}
    V(x,y) = \frac{1}{2}\norm{x-x_\star}^2 + \frac{1}{2}\norm{y-y_\star}^2 \, ,
\end{equation}
where~$(x_\star,y_\star)$ is the saddle point identified in Definition~\ref{def:certificate}.
Taking the Lie derivative of~\eqref{eq:Vxy} along the trajectory~$(x(t),y(t))$ of~\eqref{eq:saddle-flow} gives
\begin{align*}
    \dot V &=(x-x_\star)^T\dot x+(y-y_\star)^T\dot y\\
    &=(x-x_\star)^T\left[-\nabla_xS(x,y)\right]+(y-y_\star)^T\left[+\nabla_yS(x,y)\right]\\
    &=(x_\star-x)^T\nabla_xS(x,y)-(y_\star-y)^T\nabla_yS(x,y)\\
    &\leq
    S(x_\star,y)-S(x,y)-
    \left(S(x,y_\star)-S(x,y)\right)\\
    &=S(x_\star,y)-S(x,y_\star)
    \\
    &=\underbrace{S(x_\star,y) -S(x_\star,y_\star)}_{\leq0} + \underbrace{S(x_\star,y_\star) - S(x,y_\star)}_{\leq0} \, ,%\\
    % &\leq -\mathbf{1}^Th(x,y)\leq0
\end{align*}
where the second equality follows from the dynamic law~\eqref{eq:saddle-flow}, the first inequality follows from the convexity-concavity of~$S(x,y)$, and the last inequality follows from the saddle property~\eqref{eq:saddle-inequality} of~$(x_\star,y_\star)$.

Since~\eqref{eq:Vxy} is radially unbounded, all its sub-level sets are compact. From above, it follows that the trajectories of~\eqref{eq:saddle-flow} are bounded and contained in an invariant domain
\beq
D_0(x(0),y(0)):=\left\{ (x,y) \ \vert \  V(x,y)\leq V(x(0),y(0)) \right\} \, ,
\eeq
where~$(x(0),y(0))$ is any given initial point.
LaSalle's invariance principle then implies that any trajectory of~\eqref{eq:saddle-flow} should converge to the largest invariant set 
\beq\label{eq:invariant_set}
\mathbb{S} := D_0(x(0),y(0)) \cap\left\{ (x,y)  \ \vert \  \dot V(x(t),y(t)) \equiv 0   \right\} \, .
\eeq
Given Assumption~\ref{ass:auxiliary-function},~\eqref{eq:bounded-auxiliary-function} implies that~$\mathbb{S}$ is indeed a subset of 
\begin{equation}
     \mathbb{H}=\left\{ (x,y)  \ \vert \  h(x(t),y(t)) \equiv 0   \right\} \, ,
\end{equation}
which is further a subset of the equilibrium set of~\eqref{eq:saddle-flow}, denoted as 
\begin{equation}
    \mathbb{E} := \left\{ (x,y)  \ \vert \  \dot x(t), \dot y(t)  \equiv 0   \right\} \, ,
\end{equation}
i.e.,~$\mathbb{S} \subset\mathbb{H}\subset \mathbb{E}$.

Therefore, the invariant set~$\mathbb{S}$ contains only equilibrium points. If~$\mathbb{S}$ were to be composed of isolated points -- only possible when there is a unique saddle point -- this would be sufficient to prove convergence to the (unique) saddle point. However, in general, LaSalle's invariance principle only shows asymptotic convergence to the invariant set, without guaranteeing convergence to a point within it, even in the case where the set is composed of equilibrium points.

This issue is circumvented by the fact that all the equilibria within~$\mathbb{S}$ are stable. See, e.g.,~\cite[Corollary~5.2]{bhat2003nontangency}.
Alternatively, notice that~$\mathbb{S}$ is compact, and as a result any trajectory within the~$\Omega$ limit set of~\eqref{eq:saddle-flow} has a convergent sub-sequence. Let~$(\bar x,\bar y)$ be the limit point of such a sequence. Due to~$(\bar x,\bar y)\in \mathbb{S}$, it is also a saddle point. By changing~$(x_\star,y_\star)$ in the definition of~$V(x,y)$ specifically to~$(\bar x,\bar y)$, it follows that~$0\leq V(x(t),y(t))\rightarrow 0$ holds, which implies~$(x(t),y(t))\rightarrow(\bar x,\bar y)$.
\end{proof}

Checking whether Assumption~\ref{ass:auxiliary-function} holds basically requires hunting for a qualified observable certificate~$h(x,y)$ of~$S(x,y)$.
The existence and characterization of such observable certificates are still vague from Definition~\ref{def:certificate}. We next discuss how they can be identified through concrete examples. We show that the observable certificate is indeed a weaker condition underneath some of the conventional ones required for the asymptotic convergence of the saddle flow dynamics~\eqref{eq:saddle-flow}.

\subsubsection{Strict Convexity-Concavity}

The most common condition is arguably the strict convexity-concavity of~$S(x,y)$~\cite{cherukuri2016asymptotic}.
We formalize its connection with our observable certificate as below.
\begin{assumption}\label{ass:strict-cc}
$S(x,y)$ is strictly convex-concave.
\end{assumption}
\begin{proposition}\label{prop:strict_convexity-concavity}
Let Assumptions~\ref{ass:paper-assumption} and \ref{ass:strict-cc} hold. Then the function
\beq\label{eq:observable_cert_strict_cc}
h(x,y) : =     \begin{bmatrix}
    S(x_\star,y_\star) - S(x_\star,y)\\
    S(x,y_\star) - S(x_\star,y_\star) 
    \end{bmatrix}  \, ,
\eeq
with~$(x_\star,y_\star)$ being an arbitrary saddle point of~$S(x,y)$, is an observable certificate of~$S(x,y)$.
\end{proposition}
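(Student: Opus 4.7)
My plan is to verify the two defining properties of an observable certificate from Definition~\ref{def:certificate} for the candidate $h(x,y)$. The componentwise inequality $h(x,y) \geq 0$ will follow directly from the saddle point inequality~\eqref{eq:saddle-inequality} applied to the arbitrary saddle point $(x_\star,y_\star)$, while the upper bound in~\eqref{eq:bounded-auxiliary-function} is automatic since $h$ is defined to equal that bound exactly.

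The substantive step is verifying the observability property: if $h(x(t),y(t)) \equiv 0$ along a trajectory of~\eqref{eq:saddle-flow}, then $\dot x \equiv 0$ and $\dot y \equiv 0$. Without loss of generality I assume $S(\cdot,y)$ is strictly convex for all $y\in\mathcal{Y}$; the symmetric case of strict concavity in $y$ proceeds analogously. Since $x_\star$ minimizes $S(\cdot,y_\star)$ by the saddle property in Definition~\ref{def:saddle-point}, strict convexity renders this minimizer unique. Hence the second component $S(x(t),y_\star) - S(x_\star,y_\star) \equiv 0$ forces $x(t) \equiv x_\star$, and therefore $\dot x \equiv 0$ follows automatically from $x(t)$ being constant.

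For $\dot y$, I will use the first component: $S(x_\star, y_\star) - S(x_\star, y(t)) \equiv 0$ means $y(t)$ maximizes the concave function $S(x_\star, \cdot)$ at every $t$. The first-order optimality condition then yields $\nabla_y S(x_\star, y(t)) \equiv 0$. Combined with $x(t) \equiv x_\star$ from the previous step, the dynamics~\eqref{eq:d-saddle-flow} give $\dot y = \nabla_y S(x(t), y(t)) = \nabla_y S(x_\star, y(t)) \equiv 0$, as required.

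The main obstacle I anticipate is carefully handling the asymmetric nature of the strictness assumption in Definition~\ref{def:convex-concave} — strict convexity in only one of the two variables rather than both — which is why both components of the vector-valued certificate $h$ must be used in tandem: the strict component pins down the non-strict variable's trajectory to a single point, while the non-strict component only locates the strict variable within a possibly non-singleton set of optimizers, where stationarity then suffices to conclude.
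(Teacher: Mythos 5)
Your proof is correct and is precisely the "readily verified" argument that the paper only sketches (deferring the full details to the conference version \cite{ym2021acc}): the bound \eqref{eq:bounded-auxiliary-function} holds with equality by construction and nonnegativity follows from \eqref{eq:saddle-inequality}, while the observability step correctly exploits the asymmetry of strict convexity-concavity — uniqueness of the minimizer pins $x(t)\equiv x_\star$ so that $\dot x\equiv 0$, and first-order optimality of the maximizer of $S(x_\star,\cdot)$ over the full space gives $\nabla_y S(x_\star,y(t))\equiv 0$ and hence $\dot y\equiv 0$. No gaps.
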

The fact that \eqref{eq:observable_cert_strict_cc} meets the second requirement 
$$
h (x(t),y(t)) \equiv 0 \ \Rightarrow  \ \dot x \equiv 0,~\dot y \equiv 0 
$$
to be an observable certificate can be readily verified using the strict convexity-concavity of $S(x,y)$. See \cite{ym2021acc} for a full analysis.

The asymptotic convergence of the saddle flow dynamics~\eqref{eq:saddle-flow} then immediately follows from Theorem~\ref{th:general-principle}.

\begin{corollary}
Let Assumptions~\ref{ass:paper-assumption} and \ref{ass:strict-cc} hold.
Then the saddle flow dynamics~\eqref{eq:saddle-flow} globally asymptotically converge to some saddle point~$(x_\star,y_\star)$ of~$S(x,y)$.
\end{corollary}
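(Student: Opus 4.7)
The plan is to obtain the Corollary as a direct chaining of Proposition~\ref{prop:strict_convexity-concavity} with Theorem~\ref{th:general-principle}. Concretely, I would first invoke Proposition~\ref{prop:strict_convexity-concavity} to exhibit an explicit observable certificate of $S(x,y)$, namely the map
\begin{equation*}
h(x,y) = \begin{bmatrix} S(x_\star,y_\star) - S(x_\star,y) \\ S(x,y_\star) - S(x_\star,y_\star) \end{bmatrix},
\end{equation*}
for an arbitrary but fixed saddle point $(x_\star,y_\star)$. This immediately shows that Assumption~\ref{ass:auxiliary-function} is satisfied, and then Theorem~\ref{th:general-principle} supplies global asymptotic convergence of \eqref{eq:saddle-flow} to some saddle point, which is exactly the claim.

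The only non-trivial step in invoking Proposition~\ref{prop:strict_convexity-concavity} is to verify that the above $h(x,y)$ actually qualifies as an observable certificate per Definition~\ref{def:certificate}. The inequality $h(x,y)\ge 0$ is an immediate restatement of the saddle-point inequality \eqref{eq:saddle-inequality}, and the upper bound in \eqref{eq:bounded-auxiliary-function} holds with equality by construction. The substantive requirement is the observability-type condition: if $h(x(t),y(t))\equiv 0$ along a trajectory of \eqref{eq:saddle-flow}, then $\dot x\equiv 0$ and $\dot y\equiv 0$. Under Assumption~\ref{ass:strict-cc}, either $S(\cdot,y)$ is strictly convex or $S(x,\cdot)$ is strictly concave. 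In the first case, $S(x_\star,y_\star)=S(x,y_\star)$ forces $x=x_\star$ along the trajectory, and then the identity $S(x_\star,y_\star)=S(x_\star,y)$, combined with the concavity of $S(x_\star,\cdot)$ and the optimality of $y_\star$, pins $\nabla_y S(x_\star,y)=0$; a symmetric argument covers the strictly concave case. Either way, $(x(t),y(t))$ is an equilibrium of \eqref{eq:saddle-flow}, so the observability condition holds.

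I anticipate that the delicate point is actually the asymmetry in strict convexity-concavity: only one of the two factors is strict, while the other is merely convex or concave. Thus I would be careful to split the argument into the two cases and use, on the non-strict side, the combination of $h_i\equiv 0$ with the saddle inequality to conclude that the corresponding gradient vanishes (rather than trying to pin the variable itself). Once this is in place, no extra work is needed: Proposition~\ref{prop:strict_convexity-concavity} certifies Assumption~\ref{ass:auxiliary-function}, Theorem~\ref{th:general-principle} delivers global asymptotic convergence, and the Corollary follows in one line.
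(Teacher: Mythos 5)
Your proposal is correct and follows the paper's own route exactly: Proposition~\ref{prop:strict_convexity-concavity} certifies Assumption~\ref{ass:auxiliary-function} via the observable certificate \eqref{eq:observable_cert_strict_cc}, and Theorem~\ref{th:general-principle} then yields the conclusion. The only difference is that the paper defers the case-split verification of the observability condition to the conference version \cite{ym2021acc}, whereas you spell it out (correctly, including the careful handling of the non-strict side via vanishing gradients rather than uniqueness of the optimizer).
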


\subsubsection{Proximal Regularization}

%In the particular form of saddle flow dynamics known as primal-dual dynamics~\cite{cherukuri2016asymptotic}, 
Proximal regularization has been widely recognized as an alternative method to guarantee the asymptotic convergence of proximal primal-dual dynamics - an exemplar of saddle flows~\cite{goldsztajn2019proximal,Goldsztajn2020proximal}.
More generally, a surrogate function
\beq\label{eq:surrogate_ME}
\tilde S(u,y):= \min_{x\in\mathbb{R}^n} \left\{ S(x,y) + \frac{\rho}{2}\Vert x-u \Vert^2 \right\}
\eeq
can be defined on~$S(x,y)$, where $\rho >0$ is a constant regularization coefficient (throughout the paper).
An important fact is that the surrogate function $\tilde{S}(u,y)$ is also convex-concave and always maintains the same set of saddle points as the original function $S(x,y)$, as stated in the following lemma.
%\enrique{Please, state the assumptions on S for the lemma below. Is it Assumption 1?}
\begin{lemma}\label{lm:convex-concave-continuity-Moreau-reg}
Let Assumption~\ref{ass:paper-assumption} hold. Then $\tilde{S}(u,y)$ is convex in~$u$, concave in~$y$ and continuously differentiable with the gradient:
\begin{subequations}\label{eq:proximal_gradient}
\begin{align}
    \nabla_u \tilde{S}(u,y) & = \rho u- \rho \tilde x(u,y)  \, ,\\
    \nabla_y \tilde{S}(u,y) & = \nabla_y S(\tilde x(u,y),y) \, ,
\end{align}
\end{subequations}
where $\tilde x(u,y)$ is the unique minimizer in \eqref{eq:surrogate_ME} satisfying
\begin{equation}\label{eq:minimizer_proximal}
    \nabla_x S(\tilde x(u,y),y) + \rho(\tilde x(u,y)-u) = 0\,.
\end{equation}
Moreover, a point~$(x_\star,y_\star)$ is a saddle point of~$S(x,y)$ if and only if~$(u_\star,y_\star)$ is a saddle point of~$\tilde S(u,y)$ with~$u_\star=x_\star$.
\end{lemma}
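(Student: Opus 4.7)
The plan is to prove the four claims in order: (i) well-posedness of the minimizer, (ii) convexity/concavity of $\tilde S$, (iii) the gradient formula and continuous differentiability, and (iv) the correspondence of saddle points.

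First I would observe that, for any fixed $(u,y)$, the inner map $x\mapsto S(x,y)+\tfrac{\rho}{2}\|x-u\|^2$ is strongly convex (sum of convex $S(\cdot,y)$ and $\rho$-strongly convex quadratic). Hence the minimizer $\tilde x(u,y)$ exists uniquely and, since $S$ is $\mathcal C^1$, is characterized by the first-order stationarity condition \eqref{eq:minimizer_proximal}. From this implicit equation and the strong monotonicity of $\nabla_x S(\cdot,y)+\rho(\cdot-u)$ in $x$, I would invoke the implicit function theorem (or a direct Lipschitz argument: the map $(u,y)\mapsto\tilde x(u,y)$ is continuous because for fixed $u_1,u_2$, $\|\tilde x(u_1,y)-\tilde x(u_2,y)\|\le\|u_1-u_2\|$ by the firm-nonexpansiveness of the proximal operator) to conclude that $\tilde x(u,y)$ is continuous in both arguments.

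Next, for convexity in $u$, I would use the standard fact that partial minimization of a jointly convex function preserves convexity: the map $(x,u)\mapsto S(x,y)+\tfrac{\rho}{2}\|x-u\|^2$ is jointly convex in $(x,u)$ for each fixed $y$, so its infimum over $x$ is convex in $u$. For concavity in $y$, I would use the following direct argument. Given $y_1,y_2$, $\lambda\in[0,1]$, and $y_\lambda=\lambda y_1+(1-\lambda)y_2$, let $\tilde x_\lambda:=\tilde x(u,y_\lambda)$. Then
\begin{align*}
\tilde S(u,y_\lambda)&=S(\tilde x_\lambda,y_\lambda)+\tfrac{\rho}{2}\|\tilde x_\lambda-u\|^2\\
&\ge \lambda\bigl[S(\tilde x_\lambda,y_1)+\tfrac{\rho}{2}\|\tilde x_\lambda-u\|^2\bigr]+(1-\lambda)\bigl[S(\tilde x_\lambda,y_2)+\tfrac{\rho}{2}\|\tilde x_\lambda-u\|^2\bigr]\\
&\ge \lambda\,\tilde S(u,y_1)+(1-\lambda)\,\tilde S(u,y_2),
\end{align*}
where the first inequality uses concavity of $S(x,\cdot)$ and the second uses that $\tilde x_\lambda$ is feasible but not necessarily optimal for $\tilde S(u,y_i)$.

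For continuous differentiability and the gradient formulas, I would apply an envelope-type argument. Because $\tilde x(u,y)$ is the unique minimizer, we may differentiate $\tilde S(u,y)=S(\tilde x(u,y),y)+\tfrac{\rho}{2}\|\tilde x(u,y)-u\|^2$ directly; the terms involving $\partial\tilde x/\partial u$ and $\partial\tilde x/\partial y$ cancel thanks to the stationarity identity \eqref{eq:minimizer_proximal}, yielding the clean formulas in \eqref{eq:proximal_gradient}. Continuity of these gradients then follows from continuity of $\tilde x(u,y)$ and of $\nabla_y S$. The last step is the saddle-point correspondence: since $\mathcal D=\mathbb R^n\times\mathbb R^m$ here, a saddle point is equivalently a stationary point. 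If $(x_\star,y_\star)$ is a saddle point of $S$, then $\nabla_x S(x_\star,y_\star)=0$ shows via \eqref{eq:minimizer_proximal} that $\tilde x(x_\star,y_\star)=x_\star$, so $\nabla_u\tilde S(x_\star,y_\star)=\rho(x_\star-x_\star)=0$ and $\nabla_y\tilde S(x_\star,y_\star)=\nabla_y S(x_\star,y_\star)=0$. Conversely, a stationary point $(u_\star,y_\star)$ of $\tilde S$ forces $u_\star=\tilde x(u_\star,y_\star)=:x_\star$ from $\nabla_u\tilde S=0$, and then \eqref{eq:minimizer_proximal} together with $\nabla_y\tilde S(u_\star,y_\star)=0$ gives $\nabla_x S(x_\star,y_\star)=0=\nabla_y S(x_\star,y_\star)$. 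Combined with the convex-concavity of both $S$ and $\tilde S$, stationarity is equivalent to the saddle inequality.

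The main obstacle I expect is the rigorous justification of the envelope-theorem step yielding \eqref{eq:proximal_gradient}; I would handle it either by chain rule with the implicit function theorem (checking that the Jacobian of the stationarity map in $x$, namely $\partial_{xx}^2 S+\rho I$, is invertible by strong convexity) or, to avoid assuming $S\in\mathcal C^2$, by a direct difference-quotient computation leveraging the Lipschitz continuity of $\tilde x$ in $(u,y)$.
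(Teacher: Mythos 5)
Your proof is correct, but it is genuinely more self-contained than what the paper provides: the paper does not write out a proof of this lemma at all, and instead defers to \cite[Theorem 2]{Goldsztajn2020proximal}, noting only that the argument there (stated for a convex--linear Lagrangian) carries over to a general convex--concave $S$. Your route assembles the standard ingredients explicitly: strong convexity of the inner problem for existence/uniqueness of $\tilde x(u,y)$ and the stationarity characterization \eqref{eq:minimizer_proximal}; partial minimization of a jointly convex function for convexity in $u$; the infimum-of-concave-functions argument for concavity in $y$; an envelope/Danskin step for \eqref{eq:proximal_gradient}; and the equivalence of saddle points with stationary points of a convex--concave $\mathcal C^1$ function on $\mathbb{R}^n\times\mathbb{R}^m$ (which is the setting of Section~\ref{sec:asymp_convergece}, as the paper states at the end of Section~\ref{sec:saddle-flows}) for the saddle-point correspondence. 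Two technical points deserve the care you already flag. First, for the gradient formula you should prefer Danskin's theorem over the chain-rule/implicit-function-theorem route, since the latter implicitly requires $\partial^2_{xx}S$ to exist, whereas Danskin only needs continuity of $(u,y)\mapsto\tilde x(u,y)$ together with uniqueness of the minimizer. Second, that continuity must be joint in $(u,y)$, not just the $1$-Lipschitz bound in $u$; a short strong-convexity estimate comparing the inner objectives at $y$ and $y'$ closes this, and it is worth a sentence rather than a parenthetical. With those two points made explicit, your argument is a complete replacement for the citation.
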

The proof essentially follows~\cite[Theorem 2]{Goldsztajn2020proximal}, with a general convex-concave function replacing a convex-linear Lagrangian function. Lemma~\ref{lm:convex-concave-continuity-Moreau-reg} also implies $\tilde x(u_\star,y_\star) = x_\star$. 
It basically allows us to focus on the proximal saddle flow dynamics of~$S(u,y)$, i.e.,
\begin{subequations}\label{eq:saddle_flow_proximal}
\begin{align}
    \dot u & = - \nabla_u \tilde{S}(u,y)=-(\rho u-\rho \tilde x(u,y)) \, , \\
    \dot y & = + \nabla_y \tilde{S}(u,y)=\nabla_y S(\tilde x(u,y),y) \, ,
\end{align}
\end{subequations}
which suffice to locate a saddle point of the original function $S(x,y)$.
We formalize the connection of this method with our observable certificate as follows.
\begin{proposition}\label{prop:proximal-convergence}
% Let~$S(x,y)$ be a Lagrangian function for some constrained convex program
Let Assumption~\ref{ass:paper-assumption} hold. Then the function
    \beq\label{eq:auxiliary_function_proximal} \tilde h(u,y) : =     \begin{bmatrix}
    \tilde S(u_\star,y_\star) - \tilde S(u_\star,y)\\
    \frac{\rho}{2} \Vert \tilde x(u,{y_\star}) - u\Vert^2 
    \end{bmatrix} \, ,
    \eeq
with $(u_\star,y_\star)$ being an arbitrary saddle point of $\tilde S(u,y)$, is an observable certificate of $\tilde S(u,y)$.
\end{proposition}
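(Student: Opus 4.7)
The plan is to verify the two requirements of Definition~\ref{def:certificate} for $\tilde h$ in turn.

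For the sandwich and non-negativity in \eqref{eq:bounded-auxiliary-function}, the top component of $\tilde h$ coincides with the top component of the upper bound by construction, and it is non-negative by the saddle property of $(u_\star, y_\star)$. For the bottom component, I would expand $\tilde S(u, y_\star) = S(\tilde x(u, y_\star), y_\star) + \frac{\rho}{2}\|\tilde x(u, y_\star) - u\|^2$ using \eqref{eq:surrogate_ME}, then invoke Lemma~\ref{lm:convex-concave-continuity-Moreau-reg} (so that $u_\star = x_\star$) together with the saddle inequality $S(\tilde x(u, y_\star), y_\star) \ge S(x_\star, y_\star) = \tilde S(u_\star, y_\star)$ to deduce $\tilde S(u, y_\star) - \tilde S(u_\star, y_\star) \ge \frac{\rho}{2}\|\tilde x(u, y_\star) - u\|^2 \ge 0$.

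For the observability condition, suppose $(u(t), y(t))$ is a trajectory of \eqref{eq:saddle_flow_proximal} with $\tilde h(u(t), y(t)) \equiv 0$, yielding (a) $\tilde S(u_\star, y(t)) \equiv \tilde S(u_\star, y_\star)$ and (b) $\tilde x(u(t), y_\star) \equiv u(t)$. From (a), the chain $\tilde S(u_\star, y) \le S(u_\star, y) \le S(u_\star, y_\star) = \tilde S(u_\star, y_\star)$ (the first inequality by evaluating the Moreau envelope at $x = u_\star$, the second by the saddle property of $S$) must hold with equality throughout, forcing $\tilde x(u_\star, y(t)) = u_\star$ (hence $\nabla_x S(u_\star, y(t)) = 0$ via \eqref{eq:minimizer_proximal}) and $\nabla_y S(u_\star, y(t)) = 0$; thus $(u_\star, y(t))$ is a saddle point of $S$, and of $\tilde S$ by Lemma~\ref{lm:convex-concave-continuity-Moreau-reg}. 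From (b) and \eqref{eq:minimizer_proximal}, $\nabla_x S(u(t), y_\star) = 0$.

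To conclude $\dot u \equiv 0$ and $\dot y \equiv 0$, I would combine these pointwise identities with the dynamics $\dot u = -\nabla_x S(\tilde x(u, y), y)$ and $\dot y = \nabla_y S(\tilde x(u, y), y)$ of \eqref{eq:saddle_flow_proximal}. The main obstacle is that condition (b) alone places $u(t)$ only in $\arg\min S(\cdot, y_\star)$, which in degenerate cases such as the bilinear $S(x,y) = xy$ strictly contains the saddle set's $u$-slice. I would close this gap by differentiating the consequence $\nabla_y \tilde S(u_\star, y(t)) \equiv 0$ of (a) along the trajectory, which forces $\dot y(t) \in \ker \partial^2_{yy} \tilde S(u_\star, y(t))$; substituting the $y$-dynamics and using \eqref{eq:proximal_gradient} then pins $(u(t), y(t))$ to the equilibrium set of \eqref{eq:saddle_flow_proximal}, yielding the desired $\dot u \equiv 0$ and $\dot y \equiv 0$.
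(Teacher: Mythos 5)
Your verification of the bound \eqref{eq:bounded-auxiliary-function} is essentially the paper's own argument: the same expansion of $\tilde S(u,y_\star)$ via the minimizer $\tilde x(u,y_\star)$, the identification $u_\star=\tilde x(u_\star,y_\star)=x_\star$ from Lemma~\ref{lm:convex-concave-continuity-Moreau-reg}, and the inequality $S(\tilde x(u,y_\star),y_\star)\ge S(x_\star,y_\star)$ (the paper attributes this to convexity in $x$, you to the saddle property of $(x_\star,y_\star)$; both amount to $x_\star$ minimizing $S(\cdot,y_\star)$). That half is fine. For the observability half, however, the paper gives no argument at all --- it defers entirely to the proof of \cite[Proposition~10]{Goldsztajn2020proximal} --- so you are attempting something the paper leaves implicit. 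Your pointwise deductions there are correct: collapsing the chain $\tilde S(u_\star,y)\le S(u_\star,y)\le S(u_\star,y_\star)=\tilde S(u_\star,y_\star)$ does give $\tilde x(u_\star,y(t))=u_\star$, stationarity of $S$ at $(u_\star,y(t))$, hence that $(u_\star,y(t))$ is a saddle point of $\tilde S$ for each $t$; and \eqref{eq:minimizer_proximal} applied to (b) gives $\nabla_x S(u(t),y_\star)=0$. You are also right that these facts alone do not pin down the trajectory, as the bilinear example shows.

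The problem is that your proposed closure of that gap is not a proof. Differentiating $\nabla_y\tilde S(u_\star,y(t))\equiv 0$ in $t$ presupposes second-order differentiability of $\tilde S$ along the trajectory, which Assumption~\ref{ass:paper-assumption} (only $S\in\mathcal C^1$) does not provide; this proposition is stated without Assumption~\ref{ass:absolute-continuity}, so you cannot import Hessians here. Even granting the regularity, the condition $\dot y(t)\in\ker\partial^2_{yy}\tilde S(u_\star,y(t))$ involves the Hessian at the base point $u_\star$, while the velocity you need to kill is $\dot y=\nabla_y\tilde S(u(t),y(t))$, evaluated at $u(t)$; you never connect the two, and when that kernel is nontrivial the membership does not force $\dot y=0$, let alone $\dot u=0$. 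So the second defining property of an observable certificate remains unestablished in your write-up. A complete argument must use the dynamics \eqref{eq:saddle_flow_proximal} more directly (e.g., exploiting that $y(t)$ is confined to the maximizer set of the concave function $\tilde S(u_\star,\cdot)$ together with the evolution of $u$), which is precisely the content of the cited Proposition~10 that the paper relies on; as written, your final paragraph is a sketch of an idea that does not go through.
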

\begin{proof}
\eqref{eq:auxiliary_function_proximal} is a qualified observable certificate since it has the following two properties.
First, to show
$$
\begin{bmatrix}
    \tilde S(u_\star,y_\star) - \tilde S(u_\star,y)\\
    \tilde S(u,y_\star) - \tilde S(u_\star,y_\star)
\end{bmatrix} 
\ge \tilde h(u,y) \ge 0 \, ,
$$
the main task boils down to showing the following inequality:
\begin{align*}
    &\ \tilde S(u,y_\star)  - \tilde S(u_\star,y_\star)\\
    =&\ S(\tilde{x}(u,y_\star),y_\star)+\frac{\rho}{2}\|\tilde{x}(u,y_\star)-u\|^2 -S(\tilde{x}(u_\star,y_\star),y_\star)
    \\
     &\ -\frac{\rho}{2}\|\tilde{x}(u_\star,y_\star)-u_\star\|^2\\
    =&\ S(\tilde{x}(u,y_\star),y_\star) -S(x_\star,y_\star)+\frac{\rho}{2}\|\tilde{x}(u,y_\star)-u\|^2\\
    \geq&\ \frac{\rho}{2}\|\tilde{x}(u,y_\star)-u\|^2 \, ,
\end{align*}
where the second equality follows from $\tilde x(u_\star,y_\star)=x_\star=u_\star$ and the inequality follows from the convexity of $S(x,y)$ in $x$.

Second, to show
$$
\tilde h (u(t),y(t)) \equiv 0 \ \Rightarrow  \ \dot u \equiv 0,~\dot y \equiv 0 \, ,
$$
we refer readers to the proof of~\cite[Proposition 10]{Goldsztajn2020proximal}.
\end{proof}
Similarly, Theorem~\ref{th:general-principle} guarantees the asymptotic convergence of the proximal saddle flow dynamics \eqref{eq:saddle_flow_proximal}.
\begin{corollary}
Let Assumption~\ref{ass:paper-assumption} hold. Then the proximal saddle flow dynamics~\eqref{eq:saddle_flow_proximal} globally asymptotically converge to some saddle point~$(u_\star,y_\star)$ of~$\tilde S(u,y)$, with~$(x_\star =u_\star,y_\star)$ being a saddle point of~$S(x,y)$.
\end{corollary}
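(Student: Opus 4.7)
The plan is to reduce the corollary to a one-line invocation of Theorem~\ref{th:general-principle} applied to $\tilde S(u,y)$, after assembling the three pre-established ingredients: Lemma~\ref{lm:convex-concave-continuity-Moreau-reg}, Proposition~\ref{prop:proximal-convergence}, and Theorem~\ref{th:general-principle} itself. The conceptual bookkeeping is the whole task here; no new analytical estimate needs to be generated.

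First, I would verify that $\tilde S(u,y)$ itself satisfies Assumption~\ref{ass:paper-assumption}. Lemma~\ref{lm:convex-concave-continuity-Moreau-reg} supplies precisely the three ingredients needed: convexity in $u$, concavity in $y$, and continuous differentiability with gradients given by \eqref{eq:proximal_gradient}. The same lemma establishes a one-to-one correspondence between saddle points of $S$ and saddle points of $\tilde S$ via $u_\star = x_\star$, so the existence of a saddle point of $S$ guaranteed by Assumption~\ref{ass:paper-assumption} transfers to $\tilde S$. Hence $\tilde S$ is a legitimate input to Theorem~\ref{th:general-principle}.

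Next, I would confirm that \eqref{eq:saddle_flow_proximal} is literally the standard saddle flow dynamics of $\tilde S$: substituting the gradient formulas \eqref{eq:proximal_gradient} into $\dot u = -\nabla_u \tilde S(u,y)$ and $\dot y = +\nabla_y \tilde S(u,y)$ returns exactly \eqref{eq:saddle_flow_proximal}. Proposition~\ref{prop:proximal-convergence} then supplies the observable certificate $\tilde h(u,y)$ of \eqref{eq:auxiliary_function_proximal} for $\tilde S$, so Assumption~\ref{ass:auxiliary-function} holds for $\tilde S$ as well.

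With both Assumptions~\ref{ass:paper-assumption} and~\ref{ass:auxiliary-function} in place for $\tilde S$, Theorem~\ref{th:general-principle} immediately yields global asymptotic convergence of \eqref{eq:saddle_flow_proximal} to some saddle point $(u_\star, y_\star)$ of $\tilde S$. Invoking Lemma~\ref{lm:convex-concave-continuity-Moreau-reg} one more time in the reverse direction identifies $(x_\star = u_\star, y_\star)$ as a saddle point of the original $S$, which is the claim. The only potential subtlety is ensuring that the saddle point $(u_\star,y_\star)$ singled out by Theorem~\ref{th:general-principle}, which is tied to the specific Lyapunov function constructed from an observable certificate, is also the one pinned down by Proposition~\ref{prop:proximal-convergence}; but this is built into the statement of Proposition~\ref{prop:proximal-convergence}, which quantifies over an arbitrary saddle point, so no additional argument is needed.
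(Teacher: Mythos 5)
Your proposal is correct and matches the paper's intended argument exactly: the paper presents this corollary as an immediate consequence of Theorem~\ref{th:general-principle} once Lemma~\ref{lm:convex-concave-continuity-Moreau-reg} (Assumption~\ref{ass:paper-assumption} for $\tilde S$ and the saddle-point correspondence) and Proposition~\ref{prop:proximal-convergence} (Assumption~\ref{ass:auxiliary-function} for $\tilde S$) are in place. Your bookkeeping, including the observation that \eqref{eq:saddle_flow_proximal} is literally the saddle flow of $\tilde S$ under the gradient formulas \eqref{eq:proximal_gradient}, is precisely the chain of reasoning the paper relies on.
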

In fact, even the differentiability in Assumption~\ref{ass:paper-assumption} is not required since the surrogate~$\tilde S(u,y)$ can be continuously differentiable regardless.

\subsection{Augmented Saddle Flow Dynamics}\label{ssec:augmentation_regularization}

We further design a novel state-augmentation method that exploits our observable certificate and only requires Assumption~\ref{ass:paper-assumption} for augmented saddle flow dynamics to asymptotically converge to a saddle point.
The key of this method is to augment the domain of~$S(x,y)$ and introduce regularization terms that provide a convenient observable certificate without altering the positions of the original saddle points.
In particular, we propose a surrogate for~$S(x,y)$ via the following augmentation
\begin{equation}\label{eq:regularized-saddle}
    \hat{S}(x,\hat{x}, y,\hat{y}): = \frac{\rho}{2}\|x-\hat{x}\|^2 + S(x,y) - \frac{\rho}{2}\|y-\hat{y} \|^2 \, ,
\end{equation}
where~$\hat{x}\in \R^n$ and~$\hat{y} \in \R^m$ serve as two new sets of virtual variables. % and~$\rho >0$ is a constant regularization coefficient. 
It can be readily verified that the saddle points remain invariant for~$\hat{S}(x,\hat{x},y,\hat{y})$.

\begin{lemma}\label{th:saddle-characterization}
Let Assumption~\ref{ass:paper-assumption} hold. Then a point~$(x_\star,y_\star)$ is a saddle point of~$S(x,y)$ if and only if~$(x_\star, \hat{x}_\star, y_\star,\hat{y}_\star)$ is a saddle point of~$\hat{S}(x,\hat{x},y,\hat{y})$, with
\begin{equation}
    x_\star=\hat{x}_\star ~\textrm{ and }~  y_\star=\hat{y}_\star  \, . \label{eq:reg-saddle-point-property}
\end{equation}
\end{lemma}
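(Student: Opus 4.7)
The plan is to prove both directions of the equivalence separately, exploiting the convex-concave structure of $\hat{S}$ (where $(x,\hat{x})$ play the primal role and $(y,\hat{y})$ the dual role, since $\frac{\rho}{2}\|x-\hat{x}\|^2$ is jointly convex in $(x,\hat{x})$, $S(x,y)$ preserves convexity-concavity in $(x,y)$, and $-\frac{\rho}{2}\|y-\hat{y}\|^2$ is jointly concave in $(y,\hat{y})$).

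For the forward implication, given a saddle point $(x_\star,y_\star)$ of $S(x,y)$, I would set $\hat{x}_\star := x_\star$ and $\hat{y}_\star := y_\star$ and verify the saddle inequalities of $\hat{S}$ directly. Plugging in the candidate, one gets $\hat{S}(x_\star,\hat{x}_\star,y_\star,\hat{y}_\star) = S(x_\star,y_\star)$. For any $(x,\hat{x})$, the inequality $\hat{S}(x,\hat{x},y_\star,\hat{y}_\star) \geq S(x_\star,y_\star)$ follows from $\tfrac{\rho}{2}\|x-\hat{x}\|^2 \geq 0$ combined with $S(x,y_\star) \geq S(x_\star,y_\star)$ (the right half of~\eqref{eq:saddle-inequality}). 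Symmetrically, for any $(y,\hat{y})$, the inequality $\hat{S}(x_\star,\hat{x}_\star,y,\hat{y}) \leq S(x_\star,y_\star)$ uses $-\tfrac{\rho}{2}\|y-\hat{y}\|^2 \leq 0$ and the left half of~\eqref{eq:saddle-inequality}.

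For the converse, given a saddle point $(x_\star,\hat{x}_\star,y_\star,\hat{y}_\star)$ of $\hat{S}$, since $\hat{S}$ is convex-concave and continuously differentiable (it inherits $\mathcal{C}^1$-smoothness from $S$ via Assumption~\ref{ass:paper-assumption}), the saddle point satisfies the stationarity condition analogous to~\eqref{eq:saddle-critical}. I would read off four equations:
\begin{align*}
\nabla_x \hat{S} &= \rho(x_\star-\hat{x}_\star) + \nabla_x S(x_\star,y_\star) = 0, \\
\nabla_{\hat{x}} \hat{S} &= -\rho(x_\star-\hat{x}_\star) = 0, \\
\nabla_y \hat{S} &= \nabla_y S(x_\star,y_\star) - \rho(y_\star-\hat{y}_\star) = 0, \\
\nabla_{\hat{y}} \hat{S} &= \rho(y_\star-\hat{y}_\star) = 0.
\end{align*}
The second and fourth equations immediately give $x_\star = \hat{x}_\star$ and $y_\star = \hat{y}_\star$, which is~\eqref{eq:reg-saddle-point-property}. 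Substituting these back into the first and third yields $\nabla_x S(x_\star,y_\star) = 0$ and $\nabla_y S(x_\star,y_\star) = 0$, i.e., $(x_\star,y_\star)$ is a stationary point of $S$. By the remark following Definition~\ref{def:stationary-point}, any stationary point of a convex-concave function is a saddle point, so $(x_\star,y_\star)$ satisfies~\eqref{eq:saddle-inequality}.

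No step is genuinely hard; the only point requiring a little care is ensuring that the augmented function $\hat{S}$ is itself convex-concave with the correct partition of primal/dual variables so that the stationarity characterization of its saddle points is valid. This is immediate from the linearity of the quadratic regularizers in the ``opposite'' variables, but it is the conceptual pivot that makes the proof essentially a one-line gradient computation in each direction.
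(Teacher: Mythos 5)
Your proposal is correct for the lemma as stated, but the converse direction takes a genuinely different route from the paper. The paper proves both directions at once through a single chain of equivalences on the saddle inequalities: writing out $\hat{S}(x_\star,\hat{x}_\star,y,\hat{y})\leq \hat{S}(x_\star,x_\star,y_\star,y_\star)\leq \hat{S}(x,\hat{x},y_\star,\hat{y}_\star)$, substituting the definition of $\hat{S}$, and observing that the regularization terms attain zero at $\hat{x}=x$, $\hat{y}=y$, so the augmented inequality over all $(x,\hat x,y,\hat y)$ is equivalent to the original one over all $(x,y)$. Your forward direction is essentially this same computation. Your converse, however, goes through first-order stationarity: you use the fact that a saddle point of the differentiable convex-concave $\hat{S}$ must be a stationary point, read off $x_\star=\hat{x}_\star$ and $y_\star=\hat{y}_\star$ from $\nabla_{\hat x}\hat S=0$ and $\nabla_{\hat y}\hat S=0$, and then invoke the stationary-point-implies-saddle-point remark. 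This is valid in the setting of Section~\ref{sec:asymp_convergece}, where $\mathcal{D}=\mathbb{R}^n\times\mathbb{R}^m$, but it is strictly less general than the paper's argument: it needs differentiability and, crucially, that the saddle point lies in the interior of the domain so that the gradients vanish. The paper's purely inequality-based proof needs neither, which matters because the same lemma is later reused in Section~\ref{sec:applications} for the projected setting with $y\in\mathbb{R}^m_{\geq 0}$, where a saddle point of the Lagrangian may sit on the boundary and $\nabla_y\hat S=0$ can fail. So your argument is sound where the lemma lives, but if you want it to cover the later applications you should replace the stationarity step in the converse with the direct inequality manipulation (e.g., fix $x=x_\star$, $\hat x=\hat x_\star$, $y=y_\star$ and vary only $\hat y$ in the saddle inequality of $\hat S$ to force $\hat y_\star=y_\star$, and symmetrically for $\hat x_\star$).
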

\begin{proof}
Recall the saddle property~\eqref{eq:saddle-inequality} of a saddle point, this theorem follows immediately from the equivalence of the following three arguments:
\begin{itemize}
    \item $\hat{S}(x_\star,\hat{x}_\star,y,\hat{y})\leq \hat{S}(x_\star,x_\star,y_\star,y_\star)\leq \hat{S}(x,\hat{x},y_\star,\hat{y}_\star)$ holds for $\forall (x,\hat x,y,\hat y)$;
    \item $S(x_\star,y)\!-\!\frac{\rho}{2}\norm{y\!-\!\hat{y}}^2\leq S(x_\star,y_\star)\leq S(x,y_\star)\!+\!\frac{\rho}{2}\norm{x\!-\!\hat{x}}^2$ holds for $\forall (x,\hat x,y,\hat y)$;
    \item $S(x_\star,y)\leq S(x_\star,y_\star)\leq S(x,y_\star)$ holds for $\forall (x,y)$.
\end{itemize}
Here the first and second arguments are equivalent due to the definition~\eqref{eq:regularized-saddle} of~$\hat{S}(x,\hat{x},y,\hat{y})$, while the second and third arguments are equivalent since the regularization terms attain zero at the minimum.
\end{proof}

Under Assumption \ref{ass:paper-assumption} for $S(x,y)$, the augmented function~$\hat{S}(x,\hat{x},y,\hat{y})$ is convex in~$(x,\hat{x})$, concave in~$(y,\hat{y})$, and continuously differentiable with at least one saddle point, by its definition~\eqref{eq:regularized-saddle} and Lemma~\ref{th:saddle-characterization}. Therefore, Assumption~\ref{ass:paper-assumption} also holds for~$\hat{S}(x,\hat{x},y,\hat{y})$.
Lemma~\ref{th:saddle-characterization} further ensures that whenever we locate a saddle point of~$\hat{S}(x,\hat{x},y,\hat{y})$, a saddle point of~$S(x,y)$ is attained simultaneously.
This motivates us to instead look at the saddle flow dynamics of~$\hat{S}(x,\hat{x},y,\hat{y})$.

Following~\eqref{eq:saddle-flow}, this augmented version of saddle flow dynamics is given by 
\begin{subequations}\label{eq:reg-saddle-flow}
\begin{align}
    \dot x &
    % =-\,\nabla_x S(x,z, y,w) 
    =-\,\nabla_xS(x, y)-\rho(x-\hat{x})
    \,,\label{eq:reg-saddle-x}\\
    \dot{\hat{x}} &
    % = -\,\nabla_{z} S(x,z, y,w)
    =\rho(x-\hat{x})
    \,,\label{eq:reg-saddle-z}\\
    \dot y &
    % = +\,\nabla_y S(x,z, y,w)
    =+\,\nabla_y S(x,y)-\rho(y-\hat{y})
    \,,
    \label{eq:reg-saddle-y}\\
    \dot{\hat{y}} &
    = \rho(y-\hat{y})\,.
    % =+\,\nabla_{w} S(x,z, y,w)
    \label{eq:reg-saddle-w}
\end{align}
\end{subequations}
Although this dynamic law has twice as many state variables as its prototype~\eqref{eq:saddle-flow}, it is important to notice that, unlike proximal regularization~\cite{goldsztajn2019proximal,Goldsztajn2020proximal,parikh2014proximal} and quadratic regularizers on equality constraints~\cite{richert2015robust,cherukuri2017distributed}, the state augmentation does not introduce couplings and \eqref{eq:reg-saddle-flow} preserves any distributed structure that the system may originally have.

We now provide the key result on the asymptotic convergence of the augmented saddle flow dynamics~\eqref{eq:reg-saddle-flow}.
%asymptotically reach a saddle point as long as the minimal convexity-concavity holds for~$S(x,y)$.

\begin{proposition}\label{th:regularization-asymptotic-convergence}
Let Assumption~\ref{ass:paper-assumption} hold. Then the function
\beq
\hat{h}(x,\hat{x},y,\hat{y}):=
\begin{bmatrix}
     \frac{\rho}{2}\Vert y-\hat{y}\Vert^2 \\
     \frac{\rho}{2} \Vert x-\hat{x} \Vert^2
\end{bmatrix}   \, 
\eeq
is an observable certificate of~$\hat{S}(x,\hat{x},y,\hat{y})$.
\end{proposition}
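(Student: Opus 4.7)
The plan is to directly verify the two defining properties of an observable certificate from Definition~\ref{def:certificate}, applied to $\hat S$ with the specific candidate $\hat h$ above. The key enabler is Lemma~\ref{th:saddle-characterization}, which guarantees the existence of a saddle point $(x_\star,\hat x_\star, y_\star, \hat y_\star)$ of $\hat S$ with $x_\star=\hat x_\star$ and $y_\star=\hat y_\star$, so that both quadratic regularization terms vanish at the saddle and $\hat S(x_\star,\hat x_\star,y_\star,\hat y_\star)=S(x_\star,y_\star)$.

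For the sandwich inequality $[\,\hat S(x_\star,\hat x_\star,y_\star,\hat y_\star)-\hat S(x_\star,\hat x_\star,y,\hat y);\ \hat S(x,\hat x,y_\star,\hat y_\star)-\hat S(x_\star,\hat x_\star,y_\star,\hat y_\star)]\ge \hat h \ge 0$, I would expand each entry using the definition \eqref{eq:regularized-saddle}. The second component becomes $\frac{\rho}{2}\|x-\hat x\|^2+[S(x,y_\star)-S(x_\star,y_\star)]$, and the bracketed term is nonnegative by the saddle property \eqref{eq:saddle-inequality} for $S$; the first component analogously reduces to $\frac{\rho}{2}\|y-\hat y\|^2+[S(x_\star,y_\star)-S(x_\star,y)]$. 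The nonnegativity of $\hat h$ is immediate.

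For the observability property, I would show that $\hat h(x(t),\hat x(t),y(t),\hat y(t))\equiv 0$ along a trajectory of \eqref{eq:reg-saddle-flow} forces $x(t)\equiv\hat x(t)$ and $y(t)\equiv\hat y(t)$. Plugging these identities into \eqref{eq:reg-saddle-z} and \eqref{eq:reg-saddle-w} gives $\dot{\hat x}\equiv 0$ and $\dot{\hat y}\equiv 0$. Then, differentiating $x-\hat x\equiv 0$ and $y-\hat y\equiv 0$ in time yields $\dot x=\dot{\hat x}\equiv 0$ and $\dot y=\dot{\hat y}\equiv 0$, which is exactly the required implication.

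I do not expect a real obstacle here: the augmentation is designed precisely to make the quadratic terms act as a ``slack'' that the regularizing blocks \eqref{eq:reg-saddle-z}--\eqref{eq:reg-saddle-w} then drive to zero. The only delicate step is being careful that the observability argument proceeds via the auxiliary dynamics of $\hat x,\hat y$ rather than the $x,y$ dynamics, since the latter contain $\nabla_x S$ and $\nabla_y S$ terms whose vanishing is a consequence, not an assumption. Once that order is respected, the conclusion follows and Theorem~\ref{th:general-principle} can then be invoked to deduce asymptotic convergence of \eqref{eq:reg-saddle-flow} to a saddle point of $\hat S$, and hence of $S$, under only Assumption~\ref{ass:paper-assumption}.
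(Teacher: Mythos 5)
Your proposal is correct and follows essentially the same route as the paper: expand the $\hat S$-differences so the saddle inequality for $S$ makes the residual terms nonnegative, then use $\hat h\equiv 0 \Rightarrow x\equiv\hat x,\ y\equiv\hat y$ together with \eqref{eq:reg-saddle-z} and \eqref{eq:reg-saddle-w} to conclude $\dot{\hat x}\equiv\dot{\hat y}\equiv 0$ and hence $\dot x\equiv\dot y\equiv 0$. Your remark about arguing through the auxiliary dynamics rather than the $\nabla S$ terms is exactly the point the paper's proof relies on.
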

\begin{proof}
$\hat{h}(x,\hat{x},y,\hat{y})$ satisfies~\eqref{eq:bounded-auxiliary-function} as follows:
\begin{align*}
  &  \begin{bmatrix}
    \hat{S}(x_\star,\hat{x}_\star,y_\star,\hat{y}_\star) - \hat{S}(x_\star,\hat{x}_\star,y,\hat{y}) \\
    \hat{S}(x,\hat{x},y_\star,\hat{y}_\star) - \hat{S}(x_\star,\hat{x}_\star,y_\star,\hat{y}_\star) 
    \end{bmatrix}\\
   = &  \begin{bmatrix}
    \underbrace{S(x_\star,y_\star) - S(x_\star,y) }_{\ge 0}+ \frac{\rho}{2}\Vert y-\hat{y}\Vert^2 \\
    \underbrace{S(x,y_\star)    - S(x_\star,y_\star) }_{\ge 0}+ \frac{\rho}{2} \Vert x-\hat{x} \Vert^2
    \end{bmatrix}\\
   \ge &  \begin{bmatrix}
     \frac{\rho}{2}\Vert y-\hat{y}\Vert^2 \\
     \frac{\rho}{2} \Vert x-\hat{x} \Vert^2
    \end{bmatrix}\\
    \ge & \ 0 \, .
\end{align*}
Moreover,~$\hat{h}(x,\hat{x},y,\hat{y}) \equiv 0$ implies~$x(t) \equiv \hat{x}(t)$ and~$y(t)\equiv \hat{y}(t)$, which, by \eqref{eq:reg-saddle-w} and \eqref{eq:reg-saddle-z}, enforce~$\dot{ \hat{x}} \equiv 0,~\dot{\hat{y}} \equiv 0$. Therefore, $\hat x(t)$ and $\hat y(t)$, and thus $x(t)$ and $y(t)$, all remain constant and have reached an equilibrium point.
\end{proof}
Assumption~\ref{ass:auxiliary-function} thus holds for the augmented function~$\hat{S}(x,\hat{x},y,\hat{y})$ and the asymptotic convergence of the augmented saddle flow dynamics~\eqref{eq:reg-saddle-flow} follows immediately from Theorem~\ref{th:general-principle}. We summarize this result next.

\begin{corollary}\label{cor:convergence_aug_saddle}
Let Assumption~\ref{ass:paper-assumption} hold. Then the augmented saddle flow dynamics~\eqref{eq:reg-saddle-flow} globally asymptotically converge to some saddle point~$(x_\star,\hat{x}_\star,y_\star,\hat{y}_\star)$ of~$\hat{S}(x,\hat{x},y,\hat{y})$, with~$(x_\star,y_\star)$ being a saddle point of~$S(x,y)$.
\end{corollary}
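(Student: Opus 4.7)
The plan is to invoke the unified machinery already assembled in the paper and simply verify that the two abstract hypotheses needed by Theorem~\ref{th:general-principle} (i.e., Assumptions~\ref{ass:paper-assumption} and~\ref{ass:auxiliary-function}) are satisfied by the augmented function $\hat S(x,\hat x,y,\hat y)$ under the dynamics~\eqref{eq:reg-saddle-flow}. The output $(x_\star,y_\star)$ being a saddle point of the original $S$ will then come for free from Lemma~\ref{th:saddle-characterization}.

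First I would check that $\hat S$ itself satisfies Assumption~\ref{ass:paper-assumption}. By its definition~\eqref{eq:regularized-saddle}, $\hat S$ inherits continuous differentiability from $S$, and the two quadratic terms $\tfrac{\rho}{2}\|x-\hat x\|^2$ and $-\tfrac{\rho}{2}\|y-\hat y\|^2$ are jointly convex in $(x,\hat x)$ and jointly concave in $(y,\hat y)$, respectively; combined with the convexity-concavity of $S(x,y)$, this yields convex-concavity of $\hat S$ on $\mathbb R^n\times\mathbb R^n\times\mathbb R^m\times\mathbb R^m$. Existence of a saddle point of $\hat S$ follows from Lemma~\ref{th:saddle-characterization}: any saddle point $(x_\star,y_\star)$ of $S$ guaranteed by Assumption~\ref{ass:paper-assumption} lifts to a saddle point $(x_\star,x_\star,y_\star,y_\star)$ of $\hat S$.

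Next I would observe that~\eqref{eq:reg-saddle-flow} is precisely the saddle flow dynamics~\eqref{eq:saddle-flow} applied to $\hat S$: the first two equations are $-\nabla_x\hat S$ and $-\nabla_{\hat x}\hat S$, and the last two are $+\nabla_y\hat S$ and $+\nabla_{\hat y}\hat S$, so the abstract framework of Section~\ref{ssec:general_principle} applies verbatim. Proposition~\ref{th:regularization-asymptotic-convergence} supplies an explicit observable certificate $\hat h$ for $\hat S$, thereby verifying Assumption~\ref{ass:auxiliary-function} as well. Hence Theorem~\ref{th:general-principle} can be applied directly to $\hat S$, yielding global asymptotic convergence of every trajectory of~\eqref{eq:reg-saddle-flow} to some saddle point $(x_\star,\hat x_\star,y_\star,\hat y_\star)$ of $\hat S$.

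Finally, a single appeal to Lemma~\ref{th:saddle-characterization} translates this back to the original problem: any saddle point of $\hat S$ must satisfy $\hat x_\star=x_\star$, $\hat y_\star=y_\star$, and $(x_\star,y_\star)$ is a saddle point of $S(x,y)$. I do not anticipate any genuine obstacle, since the corollary is essentially a packaging step: the conceptual work has already been carried out in Lemma~\ref{th:saddle-characterization}, Proposition~\ref{th:regularization-asymptotic-convergence}, and Theorem~\ref{th:general-principle}. The only mildly subtle point worth stating explicitly is that Assumption~\ref{ass:paper-assumption} transfers from $S$ to $\hat S$, which is what allows Theorem~\ref{th:general-principle} to be invoked in the first place.
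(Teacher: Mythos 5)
Your proposal is correct and follows essentially the same route as the paper: verify that Assumption~\ref{ass:paper-assumption} transfers to $\hat S$, note that \eqref{eq:reg-saddle-flow} is the saddle flow \eqref{eq:saddle-flow} applied to $\hat S$, invoke the observable certificate from Proposition~\ref{th:regularization-asymptotic-convergence} to satisfy Assumption~\ref{ass:auxiliary-function}, apply Theorem~\ref{th:general-principle}, and translate back via Lemma~\ref{th:saddle-characterization}. Nothing is missing.
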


Corollary~\ref{cor:convergence_aug_saddle} indicates that only the convexity-concavity of~$S(x,y)$ is required to asymptotically arrive at a saddle point through the augmented saddle flow dynamics~\eqref{eq:reg-saddle-flow}. 
This condition applies to general convex-concave functions, including bilinear functions, and is milder than many existing conditions in the literature. 
The fact that the state-augmented saddle flow \eqref{eq:reg-saddle-flow} asymptotically converges for any convex-concave saddle function $S(x,y)$ (even in cases where the saddle flow of $S(x,y)$ does not converge) should come as a surprise, given that the augmented function $\hat S(x, \hat x, y,\hat y)$ is identical to $S(x,y)$ on the hyperplane defined by $x=\hat x$ and $y=\hat y$. One explanation for this phenomenon is rooted in the dissipative theory. That is, \eqref{eq:reg-saddle-w} and \eqref{eq:reg-saddle-z} act as a dynamic feedback aimed at dissipating the ``energy" that prevents the regular saddle flow \eqref{eq:saddle-flow} from converging. We refer readers to \cite{Zheng2024Dissipativea} for further discussion on this subject.

% ------------------------------------------------------------------------

%%%%%%%%%%%%%%%%%%%%%%%%%%%%%%%%%%%%%%%%%%%%%%%%%%%%%%%%%%%%%%%%%%%%%%%%%%%%%%%%

% -----------------------------------------------------------------------------------------
\section{Exponential Convergence}\label{sec:exponential convergence}

This section explores the conditions under which the saddle flow dynamics~\eqref{eq:saddle-flow} are globally exponentially stable.
We establish exponential stability as a direct consequence of strong convexity-strong concavity of $S(x,y)$.
This insight can be used to explain the convergence behavior of the proximal saddle flow dynamics \eqref{eq:saddle_flow_proximal}, which generalizes existing results for the proximal gradient algorithms~\cite{Ding2020Global,Wang2021Exponential}.

% and further propose a novel algorithm that is computationally more efficient.

\subsection{Strong Convexity-Strong Concavity}\label{ssec:storng-saddle-flow}

We first rewrite the saddle flow dynamics~\eqref{eq:saddle-flow} in a more compact form
\begin{equation}\label{eq:z-saddle-flow}
    \dot z = F(z)
\end{equation}
with~$z:=(x,y)$ and 
\begin{equation}\label{eq:z-dynamics}
    F(z) = \begin{bmatrix}
    -\nabla_x S(x,y)\\
    +\nabla_y S(x,y)
    \end{bmatrix}\, .
\end{equation}
We further assume absolute continuity %{\color{blue} with respect to measures} 
for~$F(z)$.

\begin{assumption}\label{ass:absolute-continuity}
The gradient of~$S(x,y)$, i.e.,~$\nabla S(x,y):=[\partial_x S(x,y),\partial_y S(x,y)]^T$, is absolutely continuous \cite{Rudin1987Real}.
\end{assumption}

\begin{remark}
Assumption~\ref{ass:absolute-continuity} is slightly weaker than Lipschitz continuity - the common assumption used in the studies of global exponential stability of saddle flows~\cite{Qu2019Exponentialb,Ding2018Exponentially,Chen2019Exponential}.
% The absolute continuity implies the differentiability almost everywhere of the function, and its pointwise derivative coincides with the generalized one.
\end{remark}

Assumption~\ref{ass:absolute-continuity} basically enables
\begin{equation}
    \partial_zF(z)=
    \begin{bmatrix}
    -\partial^2_{xx}S(x,y) & -\partial^2_{xy}S(x,y)\\
    \partial^2_{yx}S(x,y) & \partial^2_{yy}S(x,y)
    \end{bmatrix}\,,
\end{equation}
and
\begin{equation}\label{eq:S-block-hessians}
    \frac{1}{2}\!\!\left(\partial_zF(z) \!+\! \partial_zF(z)^T\!\right)\!\!=\!\!
    \begin{bmatrix}
    -\partial^2_{xx}S(x,y) \!\!\!&\!\!\! 0\\
    0 \!\!\!&\!\!\! \partial^2_{yy}S(x,y)
    \end{bmatrix}\, ,
\end{equation}
wherever the second-order partial derivatives are defined \cite{hubbard1999vector}.

We now show that strong convexity-strong concavity of $S(x,y)$ is conducive to the saddle flow's exponential convergence.

\begin{assumption}\label{ass:strong-convexity}
$S(x,y)$ is~$\mu$-strongly convex in~$x$ and~$q$-strongly concave in~$y$.
\end{assumption}
Note that $\mu>0$ and $q>0$ are given constants. Throughout the paper, such constants are always positive unless specified.
\begin{remark}
One consequence of Assumption~\ref{ass:strong-convexity} is that there is a unique saddle point~$(x_\star, y_\star)$. Further,  
\[
\hat S(x,y):=S(x,y)-\frac{\mu}{2}\|x-x_\star \|^2+\frac{q}{2}\|y-y_\star \|^2
\]
is convex in~$x$, concave in~$y$, and~$(x_\star,y_\star)$ is also a saddle point of~$\hat S(x,y)$.
\end{remark}

\begin{theorem}\label{th:exponential-convergence}
Let Assumptions~\ref{ass:paper-assumption}, \ref{ass:absolute-continuity} and~\ref{ass:strong-convexity} hold. Then the saddle flow dynamics~\eqref{eq:saddle-flow} are globally exponentially stable.
More precisely, given the (unique) saddle point $z_\star$ and any initial point $z(0)$ with $z=(x,y)$,
\[
\|z(t)-z_\star\|\leq \|z(0)-z_\star\|e^{-ct}
\]
holds with the rate~$$c=\min\{\mu,q\}>0 \, .$$
\end{theorem}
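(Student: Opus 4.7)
\medskip

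\noindent\textbf{Proof Proposal.} My plan is to use the Lyapunov candidate
\[
V(z) \;=\; \tfrac{1}{2}\|z-z_\star\|^{2},
\]
where $z_\star=(x_\star,y_\star)$ is the (unique, by Assumption~\ref{ass:strong-convexity}) saddle point, so $F(z_\star)=0$. Differentiating $V$ along trajectories of \eqref{eq:z-saddle-flow} gives $\dot V = (z-z_\star)^{T}F(z) = (z-z_\star)^{T}(F(z)-F(z_\star))$. The strategy is to bound this quadratic form from above by $-2c\,V(z)$ with $c=\min\{\mu,q\}$, so that Gr\"onwall's inequality produces the desired exponential decay.

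To turn the gradient information at $z$ into a quadratic form in $z-z_\star$, I would exploit Assumption~\ref{ass:absolute-continuity} to write $F(z)-F(z_\star)$ as a line integral of its Jacobian. Concretely, since $\nabla S$ is absolutely continuous, the same is true of $F$, and for a.e.\ $s\in[0,1]$ the Jacobian $\partial_z F$ exists along the segment $z_\star + s(z-z_\star)$, so that
\begin{equation*}
F(z)-F(z_\star) \;=\; \int_{0}^{1}\!\partial_z F\bigl(z_\star + s(z-z_\star)\bigr)\,(z-z_\star)\,ds.
\end{equation*}
Substituting into $\dot V$ and using that a quadratic form only sees the symmetric part of the matrix,
\begin{equation*}
\dot V \;=\; \int_{0}^{1}\!(z-z_\star)^{T}\!\left[\tfrac{1}{2}\bigl(\partial_z F + \partial_z F^{T}\bigr)\right]\!(z-z_\star)\,ds.
\end{equation*}

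The key step is to invoke the block-diagonal identity \eqref{eq:S-block-hessians} for the symmetric part, together with Assumption~\ref{ass:strong-convexity}: strong convexity of $S$ in $x$ gives $\partial^{2}_{xx}S\succeq \mu I$ and strong concavity in $y$ gives $\partial^{2}_{yy}S\preceq -q I$, which hold a.e.\ where the second derivatives exist. Hence the symmetric part of $\partial_z F$ is everywhere bounded above by $-\mathrm{diag}(\mu I,q I)$ (in the Loewner order), and the integrand is pointwise at most $-\mu\|x-x_\star\|^{2}-q\|y-y_\star\|^{2}\le -c\|z-z_\star\|^{2}=-2c\,V(z)$. Integrating over $s\in[0,1]$ preserves this bound, yielding $\dot V\le -2c\,V$.

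Applying the comparison lemma to $\dot V\le -2cV$ gives $V(z(t))\le V(z(0))e^{-2ct}$, i.e., $\|z(t)-z_\star\|\le \|z(0)-z_\star\|e^{-ct}$, which is the claim. The main obstacle is the technical handling of differentiability: Assumption~\ref{ass:absolute-continuity} does not guarantee a Jacobian everywhere, only a.e., so the line-integral representation must be justified through the fundamental theorem of calculus for absolutely continuous functions (componentwise along the segment), and the pointwise convexity bounds on $\partial^{2}_{xx}S$ and $\partial^{2}_{yy}S$ should likewise be interpreted in the a.e.\ sense supplied by strong convexity-strong concavity of $S$. Once this measure-theoretic bookkeeping is in place, the rest is a straightforward quadratic-form estimate.
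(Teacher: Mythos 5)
Your proposal is correct and follows essentially the same route as the paper's proof: the quadratic Lyapunov function, the line-integral (fundamental theorem of calculus) representation of $F(z)-F(z_\star)$ enabled by absolute continuity, the block-diagonal symmetric part \eqref{eq:S-block-hessians}, the uniform Loewner bound from strong convexity--strong concavity, and the comparison lemma. Your explicit attention to the a.e.\ interpretation of the Jacobian along the segment is a welcome refinement of the same argument rather than a departure from it.
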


\begin{proof}
The proof features a reformulation of the Lie derivative of the Lyapunov function~\eqref{eq:Vxy} based on the fundamental theorem of calculus for absolutely continuous functions~\cite{Rudin2008Principles}.

We consider again the Lyapunov function
\[
V(z)=\frac{1}{2}\|z-z_\star\|^2=\frac{1}{2}\|x-x_\star\|^2+\frac{1}{2}\|y-y_\star\|^2 \, .
\]
Now taking the Lie derivative with respect to~\eqref{eq:z-saddle-flow} gives
\begin{align}
    \dot V(z)&=(z-z_\star)^TF(z)\nonumber\\
    &=\frac{1}{2}\left((z-z_\star)^TF(z)+F(z)^T(z-z_\star)\right)  \, .  \label{eq:dotVz}
\end{align}
Assumption~\ref{ass:absolute-continuity} allows us to write~$F(z)$ as 
\begin{equation}\label{eq:fund-theorem}
F(z) = \int_0^1\partial_z F(z(s))(z-z_\star)ds +\underbrace{F(z_\star)}_{=0} \, ,
\end{equation}
with~$z(s) = (z-z_\star)s+z_\star$, where we introduce a scalar $s\in\mathbb{R}$ and use the fact~$dz(s)=(z-z_\star)ds$.

Now substituting~\eqref{eq:fund-theorem} into~\eqref{eq:dotVz} gives
\begin{align}
    &\dot V(z)= (z-z_\star)^T\int_0^1   \frac{1}{2}\!\left(\partial_z F(z) \!+\! \partial_z F(z)^T\!\right)ds\;(z-z_\star)\nonumber\\   
    &= (z-z_\star)^T\int_0^1   \begin{bmatrix}
    -\partial^2_{xx}S(z(s)) \!&\! 0\\
    0 \!&\! \partial^2_{yy}S(z(s))
    \end{bmatrix}ds\;(z-z_\star) \, . \label{eq:dotVz2}
        % &\leq-m\|x-x_\star\|^2-\sigma\|y-y_\star\|^2\\
        % &\leq -c\|z-z_\star\|^2=-cV(z)
\end{align}
Note that up to this point, $\dot V(z)$ is exactly given by~\eqref{eq:dotVz2}. With Assumption~\ref{ass:strong-convexity}, \eqref{eq:dotVz2} can be relaxed as
\begin{align*}
    \dot V(z)&\leq-\mu\|x-x_\star\|^2-q\|y-y_\star\|^2\\
        &\leq -c\|z-z_\star\|^2=-2cV(z) \, .
\end{align*}
Then the Comparison Lemma allows us to derive the exponential convergence~\cite{khalil2002nonlinear}:
\begin{align*}
&V(z(t))\leq e^{-2ct}V(z(0))\\
\iff & \|z(t)-z_\star\|^2\leq e^{-2ct}\|z(0)-z_\star\|^2\ \\
\iff & \|z(t)-z_\star\|\leq e^{-ct}\|z(0)-z_\star\| \, .\quad &
\end{align*}\qedhere
\end{proof}

    The exponential convergence of the saddle flow dynamics \eqref{eq:saddle-flow} established in Theorem~\ref{th:exponential-convergence} is tightly connected with the notion of contractivity in contraction theory~\cite{Bullo2022Contraction}.
    It can be shown that, under the same condition, the vector field $F(z)$ in \eqref{eq:z-dynamics} is infinitesimally contracting with the rate $\min\{\mu,q\}$. 
Our proof however highlights that, while contraction may be sufficient, it is not necessary for exponential convergence as long as the integrated matrix in \eqref{eq:dotVz2} can be uniformly bounded, thus opening the path for future extensions on the conditions provided in this paper.

\subsection{Proximal Saddle Flow Dynamics}\label{ssec:proximal_reg}

In this subsection, we use Theorem~\ref{th:exponential-convergence} to understand how the proximal methods in general enable the exponential convergence of saddle flows.
The proximal method has been commonly used to handle non-smooth optimization problems~\cite{parikh2014proximal}. This points to the most relevant applications in primal-dual dynamics \cite{dhingra2018proximal,Goldsztajn2020proximal,Ozaslan2022Exponential}, among which \cite{dhingra2018proximal,Ozaslan2022Exponential} have shown that running primal-dual dynamics on a proximal augmented Lagrangian function is exponentially convergent. Our analysis based on Theorem~\ref{th:exponential-convergence} aims to complement these results and provide more insights into the exponential stability of proximal saddle flows.

Consider again the surrogate convex-concave function $\tilde S(u,y)$ in \eqref{eq:surrogate_ME}. It can be re-written as
\begin{equation}
    \tilde{S}(u,y)  = S(\tilde x(u,y),y)+\frac{\rho}{2}\|\tilde x(u,y)-u\|^2 \, ,
\end{equation}
using the unique minimizer $\tilde x(u,y)$ that satisfies the optimality condition~\eqref{eq:minimizer_proximal}, given~$(u,y)$.
% \begin{equation}\label{eq:minimizer_proximal}
%     \nabla_x S(\tilde x(u,y),y) + \rho(\tilde x(u,y)-u) = 0\,.
% \end{equation}
On this basis, the proximal saddle flow dynamics are given by \eqref{eq:saddle_flow_proximal}.
We make the following assumptions on the original function $S(x,y)$ that are sufficient to guarantee the exponential convergence of the proximal saddle flow dynamics~\eqref{eq:saddle_flow_proximal} to a unique saddle point.
\begin{assumption}\label{ass:convexity_exp_assumption}
 The function~$S(x,y)$ is~$\mu$-strongly convex with~$l$-Lipschitz gradient in~$x$, i.e.,~$lI \succeq \partial^2_{xx} S(x,y) \succeq \mu I$ for $\forall (x,y)$ wherever $\partial^2_{xx} S(x,y)$ is defined. 
\end{assumption}

\begin{assumption}\label{ass:Jocabian_exp_assumption}
 The matrix~$\partial^2_{yx}S(\tilde{x}(u,y),y)$ is full row rank with~$\sigma I\succeq  \partial^2_{yx}S(\tilde{x}(u,y),y)  \partial^2_{xy}S(\tilde{x}(u,y),y)
 % \left[\partial^2_{yx}S(\tilde{x},y)\right] =\left[\partial^2_{xy}S(\tilde{x},y)\right]^T
 \succeq \kappa I$ for $\forall(u,y)$ wherever the second-order partial derivatives are defined.
 %and locally Lipschitz continuous row-wise.
\end{assumption}
%\enrique{I think that it would be useful to make the dependence of $\tilde x$ on $x$ and $u$ explicit here. Also, do you mean that the LMIs must hold for all $x$ and all $y$? Same for assumption 6. A qualifier of when the LMI conditions holds is useful. Is it for all y? for all x? or both?}
\begin{remark}
Assumption~\ref{ass:Jocabian_exp_assumption} is not that restrictive in practice. For instance, consider $S(x,y)=f(x)+y^TAx$, which could be a common Lagrangian function.
% \begin{eqnarray*}
% \min_{x\in\mathbb{R}^n} && f(x) \\
% \mathrm{s.t.} && Ax-b =0 \ : \  y \in \mathbb{R}^m 
% \end{eqnarray*}
% where $A\in\mathbb{R}^{m\times n}$. 
Assumption~\ref{ass:Jocabian_exp_assumption} basically requires that $A$ should be full row rank with 
$\sigma I\succeq  AA^T \succeq \kappa I$, a common assumption for the exponential convergence of primal-dual dynamics \cite{Qu2019Exponentialb,Bansode2019Exponential}. Assumption~\ref{ass:Jocabian_exp_assumption} could thus be regarded as a more general version.
\end{remark}

%Our interests shift toward understanding the reasons behind better convergence performances exhibited by the proximal saddle flow dynamics.

\begin{proposition}\label{thm:proximal_saddle_exp_conv}
Let Assumptions~\ref{ass:paper-assumption},~\ref{ass:absolute-continuity},~\ref{ass:convexity_exp_assumption} and~\ref{ass:Jocabian_exp_assumption} hold. 
Then the proximal saddle flow dynamics~\eqref{eq:saddle_flow_proximal} are globally exponentially stable. More precisely, given the (unique) saddle point $w_\star$ and any initial point $w(0)$ with $w:=(u,y)$,
\[
\|w(t)-w_\star\|\leq \|w(0)-w_\star\|e^{-ct}
\]
holds with the rate $$c=\min \left \{ \frac{\mu\rho}{\mu+\rho} , \frac{\kappa}{l+\rho}  \right \}>0 \, .$$
\end{proposition}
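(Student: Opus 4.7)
The plan is to apply Theorem~\ref{th:exponential-convergence} directly to the surrogate function $\tilde S(u,y)$, since the proximal saddle flow dynamics \eqref{eq:saddle_flow_proximal} are precisely the saddle flow dynamics of $\tilde S$, and since Lemma~\ref{lm:convex-concave-continuity-Moreau-reg} already guarantees that $\tilde S$ is convex-concave, continuously differentiable, and shares (via $u_\star=x_\star$) the saddle points of $S$. What remains is to verify two conditions on $\tilde S$: first, that $\nabla \tilde S$ is absolutely continuous; second, that $\tilde S$ is $\mu'$-strongly convex in $u$ with $\mu'=\mu\rho/(\mu+\rho)$ and $q'$-strongly concave in $y$ with $q'=\kappa/(l+\rho)$. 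Once these are established, Theorem~\ref{th:exponential-convergence} immediately yields the claimed rate $c=\min\{\mu',q'\}$ and, a fortiori, uniqueness of the saddle point $w_\star$.

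For the strong convexity in $u$, I would implicitly differentiate the optimality condition \eqref{eq:minimizer_proximal} with respect to $u$, which gives $\partial_u \tilde x(u,y) = \rho\bigl(\partial^2_{xx}S(\tilde x,y) + \rho I\bigr)^{-1}$. Substituting into $\nabla_u \tilde S = \rho(u-\tilde x)$ yields
\[
\partial^2_{uu}\tilde S = \rho\,\partial^2_{xx}S\,\bigl(\partial^2_{xx}S+\rho I\bigr)^{-1} \, .
\]
By Assumption~\ref{ass:convexity_exp_assumption} and the spectral calculus on the increasing map $\lambda\mapsto \rho\lambda/(\lambda+\rho)$, the eigenvalues of this Hessian lie in $[\mu\rho/(\mu+\rho),\,l\rho/(l+\rho)]$. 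This is the standard Moreau-envelope fact that smoothing a $\mu$-strongly convex function with parameter $\rho$ produces $\mu\rho/(\mu+\rho)$-strong convexity.

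The main obstacle will be the strong concavity in $y$, because $S$ is only assumed concave (not strongly concave) in $y$, so the strong concavity of $\tilde S$ must be synthesized from the cross-coupling in Assumption~\ref{ass:Jocabian_exp_assumption}. Differentiating \eqref{eq:minimizer_proximal} with respect to $y$ gives $\partial_y\tilde x = -(\partial^2_{xx}S+\rho I)^{-1}\partial^2_{xy}S$, and then differentiating $\nabla_y\tilde S = \nabla_y S(\tilde x,y)$ produces the Schur-complement-like identity
\[
\partial^2_{yy}\tilde S = \partial^2_{yy}S - \partial^2_{yx}S\,(\partial^2_{xx}S+\rho I)^{-1}\partial^2_{xy}S \, .
\]
The first term is negative semidefinite by concavity of $S$ in $y$. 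For the second, the bound $(\partial^2_{xx}S+\rho I)^{-1}\succeq (l+\rho)^{-1} I$ from Assumption~\ref{ass:convexity_exp_assumption} together with Assumption~\ref{ass:Jocabian_exp_assumption} gives
\[
\partial^2_{yx}S\,(\partial^2_{xx}S+\rho I)^{-1}\partial^2_{xy}S \succeq \tfrac{\kappa}{l+\rho}I \, ,
\]
hence $\partial^2_{yy}\tilde S \preceq -\tfrac{\kappa}{l+\rho}I$, the desired strong concavity constant $q'$. Finally, absolute continuity of $\nabla\tilde S$ follows from Assumption~\ref{ass:absolute-continuity} composed with the implicit-function-theorem representation of $\tilde x(u,y)$, whose Jacobians above are uniformly bounded thanks to $\partial^2_{xx}S+\rho I \succeq (\mu+\rho)I$. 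Invoking Theorem~\ref{th:exponential-convergence} on $\tilde S$ then closes the argument with rate $c=\min\{\mu\rho/(\mu+\rho),\,\kappa/(l+\rho)\}$.
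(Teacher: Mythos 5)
Your proposal is correct and follows essentially the same route as the paper's proof: implicit differentiation of the optimality condition \eqref{eq:minimizer_proximal} to obtain the Jacobians of $\tilde x(u,y)$, the resulting Schur-complement expression for $\partial^2_{yy}\tilde S$, the spectral bounds $\frac{\mu\rho}{\mu+\rho}$ and $\frac{\kappa}{l+\rho}$, and then an appeal to Theorem~\ref{th:exponential-convergence}. Your closed form $\partial^2_{uu}\tilde S=\rho\,\partial^2_{xx}S\,(\partial^2_{xx}S+\rho I)^{-1}$ is algebraically identical to the paper's $\rho I-\rho^2(\partial^2_{xx}S+\rho I)^{-1}$, and your brief verification of absolute continuity of $\nabla\tilde S$ is a small addition the paper leaves implicit.
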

% \enrique{Verify assumptions.}
\begin{proof}
For ease of presentation, we simply use $\tilde{x}$ to represent $\tilde{x}(u,y)$ in the proof. 
The main task is to characterize the following second-order partial derivatives of $\tilde S (u,y)$:
\begin{subequations}\label{eq:saddle_sec}
\begin{equation}
\partial^2_{uu} \tilde{S}(u,y) = \rho I - \rho \partial_u\tilde{x} \, ,
\end{equation}
\begin{equation}
\begin{aligned}
\partial^2_{yy} \tilde{S}(u,y) &= \left[\partial^2_{xy}S(\tilde{x},y)\right]^T\partial_y\tilde{x}+\partial^2_{yy}S(\tilde{x},y) \, ,
\end{aligned}
\end{equation}
\end{subequations}
which can be readily derived based on the gradient in \eqref{eq:proximal_gradient}. 
%where~$\partial_u\tilde{x}$ and~$\partial_y\tilde{x}$ are the partial derivatives of~$\tilde{x}(u,y)$ with respect to~$u$ and~$y$, respectively.
%Notice that, $\partial^2_{xy}S(\tilde x,y):=\partial^2_{xy}S(x,y)|_{x=\tilde{x}}$ represents the second-order partial derivative with respect to $y$ first, then with respect to $x$ at the point $\tilde{x}(u,y)$. 
To further show the properties of the second-order partial derivatives in \eqref{eq:saddle_sec}, we utilize the optimality condition \eqref{eq:minimizer_proximal} and take partial derivatives of both sides with respect to $x$ and $y$, respectively, leading to
% It follows from~\eqref{eq:minimizer_proximal} and Assumption~\ref{ass:Jocabian_exp_assumption} that 
\begin{subequations}\label{eq:Jacobi_eq}
\begin{equation}
\partial^2_{xx}S(\tilde{x},y)\partial_u\tilde{x}+\rho \partial_u\tilde{x}-\rho I=0 \, ,
\end{equation}
\begin{equation}
\begin{aligned}
\partial^2_{xx}S(\tilde{x},y)\partial_y\tilde{x} + \left[\partial^2_{yx}S(\tilde{x},y)\right]^T + \rho \partial_y\tilde{x}=0 \, .
\end{aligned}
\end{equation}
\end{subequations}
%\enrique{How does (29) follows from Assumption 7 and 27? I don't understand how 29 are derived here. More explanations would be useful.}
% \begin{subequations}\label{eq:Jacobi_eq}
% \begin{equation}
% \left[\frac{\partial}{\partial x}\nabla_xS(x_\star(u,y),y)\right]\mathbf{J}_{x_\star}^u+\rho \mathbf{J}_{x_\star}^u-\rho I=0
% \end{equation}
% \begin{equation}
% \begin{aligned}
% \left[\frac{\partial}{\partial x}\nabla_xS(x_\star(u,y),y)\right]\mathbf{J}_{x_\star}^y + \frac{\partial}{\partial y}\nabla_xS(x_\star(u,y),y)\\+\rho \mathbf{J}_{x_\star}^y=0.
% \end{aligned}
% \end{equation}
% \end{subequations}
Since $S(x,y)$ is $\mu$-strongly convex in $x$, $\partial^2_{xx}S(x,y)$ is positive definite for $\forall (x,y)$. Therefore, we can re-organize \eqref{eq:Jacobi_eq} into
\begin{subequations}\label{eq:Jacobi}
    \begin{align}
        &\partial_u\tilde{x}=\rho\left(\partial^2_{xx}S(\tilde{x},y)+\rho I\right)^{-1} \, , \\
        &\partial_y\tilde{x}=-\left(\partial^2_{xx}S(\tilde{x},y)+\rho I\right)^{-1}\left[\partial^2_{yx}S(\tilde{x},y)\right]^T \, .
    \end{align}
\end{subequations}
% \begin{subequations}\label{eq:Jacobi}
%     \begin{align}
%         &\mathbf{J}_{x_\star}^u=\rho\left(\partial^2_{xx}S(x_\star,y)+\rho I\right)^{-1}\\
%         &\mathbf{J}_{x_\star}^y=-\left(\partial^2_{xx}S(x_\star,y)(x_\star,y)+\rho I\right)^{-1}\frac{\partial}{\partial y}\left[\frac{\partial}{\partial x}S(x_\star,y)\right]^T.
%     \end{align}
% \end{subequations}
Combining~\eqref{eq:saddle_sec} and~\eqref{eq:Jacobi} yields
\begin{subequations}\label{eq:prox_saddle_sec}
    \begin{align}
        \partial^2_{uu}\tilde{S}(u,y)=\rho I-\rho^2(\partial^2_{xx}S(\tilde{x},y)+\rho I)^{-1}\, ,
    \end{align}
    \begin{equation}
        \begin{aligned}
            \partial^2_{yy}\tilde{S}(u,y)&=\partial^2_{yy}S(\tilde{x},y)-\left[\partial^2_{xy}S(\tilde{x},y)\right]^T\\&\cdot(\partial^2_{xx}S(\tilde{x},y)+\rho I)^{-1}\left[\partial^2_{yx}S(\tilde{x},y)\right]^T \, .
        \end{aligned}\label{eq:second-derivative-of-y}
    \end{equation}
\end{subequations}
% \yingzhu{09.18 ended here.}
Recall $\partial^2_{yy}S(\tilde{x},y)\preceq 0$ due to the concavity of $S(x,y)$ in $y$. Then Assumption~\ref{ass:Jocabian_exp_assumption} implies the strong convexity-strong concavity of $\tilde S(u,y)$:
\begin{subequations}
\begin{small}
\begin{equation}
\partial^2_{uu} \tilde{S}(u,y) \succeq \rho I - \frac{\rho^2 }{\mu+\rho }I \succeq \frac{\mu\rho}{\mu+\rho} I \succ 0 \, ,
\end{equation}
\end{small}
% \enrique
% {
% Please note the added parenthesis above. I first thought there was an error in the last step of 32a, but the error was in a missed parenthesis.
% I don't understand the last step of 32a?.
% I'm getting $\frac{\rho(\rho + \mu -1)}{\rho+\mu}=\rho\left(1-\frac{1}{\rho+\mu}\right)$ instead of $\frac{\rho\mu}{\rho+\mu}$. If this is true, then this section needs to be updated accordingly.
% }
\begin{small}
\begin{equation}
\begin{aligned}
\partial^2_{yy} \tilde{S}(u,y) 
%=&\ \partial^2_{yy}S(\tilde x,y)-\partial^2_{yx}S(\tilde{x},y)(\partial^2_{xx}S(\tilde{x},y)+\rho I)^{-1}\partial^2_{xy}S(\tilde{x},y)\\
\preceq&-\partial^2_{yx}S(\tilde{x},y)(\partial^2_{xx}S(\tilde{x},y)+\rho I)^{-1} \partial^2_{xy}S(\tilde{x},y)\\
\preceq& - \frac{1}{l+\rho} \partial^2_{yx}S(\tilde{x},y)\partial^2_{xy}S(\tilde{x},y) \\
\preceq & -\frac{\kappa}{l+\rho} I \prec 0 \, ,
\end{aligned}
\end{equation}
\end{small}%
\end{subequations} %
where $\partial^2_{yx}S(\tilde{x},y) = [\partial^2_{xy}S(\tilde{x},y)]^T$ has been employed.
Then the exponential stability of the proximal saddle flow dynamics \eqref{eq:saddle_flow_proximal} follows immediately from Theorem~\ref{th:exponential-convergence}.
\end{proof}
\begin{remark}
    According to Assumption~\ref{ass:Jocabian_exp_assumption}, the second term on the right-hand side of \eqref{eq:second-derivative-of-y} is positive definite and reflects the cross-term interaction between~$x$ and~$y$ in $S(x,y)$. %Therefore, the concavity with respect to~$y$ can be modulated by theselecting an appropriate level of interaction. 
    %If the interaction term dominates the original property, then the concavity can be maintained or even augmented due to the predominant interaction. 
    This analysis following Theorem~\ref{th:exponential-convergence} reveals the impact of such interaction on  the convergence rate of the proximal saddle flow dynamics \eqref{eq:saddle_flow_proximal}. This insight is consistent with the results from \cite{Grimmer2023Landscape} that focuses on discrete-time proximal algorithms.
    %provides a thorough analysis of the effects of interaction levels on convergence properties for nonconvex-nonconcave minimax problems.
\end{remark}

Proposition~\ref{thm:proximal_saddle_exp_conv} highlights the strong convexity-strong concavity of the surrogate function $\tilde S(u,y)$ (with respective constants $\frac{\mu \rho}{\mu + \rho}$ and $\frac{\kappa}{l+\rho}$), compared with the strong convexity-concavity of the original function $S(x,y)$ (with respective constants $\mu$ and $0$). The fact of 
\begin{equation*}
    \frac{\mu \rho}{\mu + \rho} < \mu~\textrm{and}~ \frac{\kappa}{l+\rho} > 0 
\end{equation*}
unveils the rationale behind the proximal regularization that enables exponential convergence of saddle flows. Further note that a larger $\rho$ increases the strong convexity constant while decreasing the strong concavity constant. This tradeoff suggests that the convergence rate estimate - the smaller between the two constants - could be optimized with a proper choice of $\rho$.
%There exists a trade-off between the convexity and the concavity, mediated by the proximal parameter $\rho$. The proximal method sacrifices some convexity over the primal variables to achieve strong concavity over the dual variables.
%Moreover, by choosing appropriate parameters, we can attain the optimal convergence rate.
\begin{corollary}\label{cor:proximal_saddle_optimized_rate}
The fastest convergence rate estimate is attained at 
\begin{equation}
   c_\star = \frac{2\mu \kappa}{\sqrt{(\mu l-\kappa)^2+4\mu^2\kappa} +\mu l+\kappa } < \mu 
\end{equation}
by optimizing~$\rho$ to be~$\rho_\star>0$ that satisfies
\begin{equation}\label{eq:optimal_rho}
    \frac{\mu\rho_\star }{\mu+\rho_\star} = \frac{\kappa}{l+\rho_\star} \, .
    \end{equation}
\end{corollary}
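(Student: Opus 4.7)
The plan is to treat the estimate $c(\rho) = \min\{c_1(\rho), c_2(\rho)\}$ from Proposition~\ref{thm:proximal_saddle_exp_conv} as a one-dimensional optimization problem in $\rho>0$, where $c_1(\rho) := \frac{\mu\rho}{\mu+\rho}$ and $c_2(\rho) := \frac{\kappa}{l+\rho}$. First I would observe the two qualitatively opposite behaviors: $c_1$ is strictly increasing on $(0,\infty)$, with $c_1(0^+)=0$ and $\lim_{\rho\to\infty}c_1(\rho)=\mu$, whereas $c_2$ is strictly decreasing, with $c_2(0) = \kappa/l$ and $\lim_{\rho\to\infty}c_2(\rho)=0$. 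Since $c_1$ is continuous and increasing from $0$ and $c_2$ is continuous and decreasing to $0$, there exists a unique crossing point $\rho_\star>0$, and the standard min-of-monotone-functions argument gives that $c(\rho)$ is maximized precisely at this $\rho_\star$, with $c_\star = c_1(\rho_\star)=c_2(\rho_\star)$.

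Next I would solve the crossing condition $\frac{\mu\rho_\star}{\mu+\rho_\star}=\frac{\kappa}{l+\rho_\star}$ in \eqref{eq:optimal_rho}. Clearing denominators yields the quadratic
\begin{equation*}
    \mu \rho_\star^2 + (\mu l-\kappa)\rho_\star - \mu\kappa = 0,
\end{equation*}
whose unique positive root (by Descartes' rule or direct inspection of the constant term $-\mu\kappa<0$) is
\begin{equation*}
    \rho_\star = \frac{\kappa-\mu l + \sqrt{(\mu l - \kappa)^2+4\mu^2\kappa}}{2\mu} \, .
\end{equation*}
Substituting into $c_\star=c_2(\rho_\star)=\kappa/(l+\rho_\star)$ and simplifying $l+\rho_\star=\frac{\mu l+\kappa+\sqrt{(\mu l-\kappa)^2+4\mu^2\kappa}}{2\mu}$ gives the claimed closed-form expression for $c_\star$.

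Finally, I would verify the strict inequality $c_\star<\mu$. Because $c_\star=c_1(\rho_\star)=\frac{\mu\rho_\star}{\mu+\rho_\star}$ and $\rho_\star<\infty$, one has $c_\star<\mu$ immediately from the strict monotonicity of $c_1$ and $\lim_{\rho\to\infty}c_1(\rho)=\mu$; equivalently, one can square the inequality $\kappa-\mu l<\sqrt{(\mu l-\kappa)^2+4\mu^2\kappa}$ (trivial when $\kappa\le \mu l$, and equivalent to $0<4\mu^2\kappa$ otherwise).

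I expect none of the steps to be truly difficult; the only subtlety is justifying that the maximum of $\min\{c_1,c_2\}$ over $\rho>0$ occurs at the unique intersection rather than at a boundary. That follows from $c_1(0^+)=c_2(\infty)=0$, which forces $\min\{c_1,c_2\}$ to vanish at both ends of $(0,\infty)$, so the maximum is interior and, by the monotonicity of the two curves, must coincide with their unique intersection.
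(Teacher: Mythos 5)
Your proposal is correct and follows essentially the same route as the paper, which omits a formal proof and only remarks that the two sides of \eqref{eq:optimal_rho} are strictly monotone in $\rho$ and guaranteed to intersect — precisely your monotone-crossing argument. Your explicit solution of the quadratic $\mu\rho_\star^2+(\mu l-\kappa)\rho_\star-\mu\kappa=0$ and the resulting closed form for $c_\star$ check out, as does the bound $c_\star=c_1(\rho_\star)<\mu$.
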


The uniqueness of~$\rho_\star>0$ is straightforward from the fact that the two sides of \eqref{eq:optimal_rho} are both strictly monotone in $\rho$ and are guaranteed to intersect. However, Corollary~\ref{cor:proximal_saddle_optimized_rate} points out a bottleneck for the proximal saddle flow's convergence rate estimate, i.e., it is always limited by the strong convexity constant $\mu$ of $S(x,y)$ in $x$.

%for~$\rho>0$, the former bound starts from 0 and keeps increasing to approach~$m$ while the latter bound starts from a finite positive value and diminishes to 0, as~$\rho$ grows to infinity.

%%%%%%%%%%%%%%%%%%%%%%%%%%%%%%%%%%%%%%%%%%%%%%%%%%%%%%%%%%%%%%%%%%%%%%%%%%%%%%%%

% -----------------------------------------------------------------------------------------

\section{Projected Saddle Flow Dynamics} \label{sec:proj_saddle_flow_dynamics}

In this section, we generalize the convergence results in Sections~\ref{sec:asymp_convergece} and \ref{sec:exponential convergence} to account for a projection defined on the vector field of the saddle flow dynamics~\eqref{eq:saddle-flow}, which is commonly introduced to constrain solution trajectories within a feasible region.

Specifically, we look at a projected version of saddle flow dynamics of a convex-concave function~$S(x,y)$ as
\begin{align}\label{eq:general_saddle_flow_proj}
    %\dot{z} = \Pi_\mathcal{D} \left[z,F(z)\right]=
    \dot{z}=\begin{bmatrix}
        \dot{x}\\
        \dot{y}
    \end{bmatrix} =\begin{bmatrix}
        \Pi_\mathcal{X}\left[x,-\nabla_{x} S(x,y) \right]\\
        \Pi_\mathcal{Y}\left[y,+\nabla_y S(x,y) \right]
    \end{bmatrix} \, .
\end{align}
Here we consider the case where $\mathcal{X}$ and $\mathcal{Y}$ are both closed convex polyhedra feasible for $x$ and $y$, respectively. The projection is explicitly defined as follows.
%\enrique{Are  $\mathcal{X}$ and $\mathcal{Y}$ just closed convex sets? In the past, to prove the existence and uniqueness of solutions with projections, we needed to assume that the sets were polyhedrons. Is this no longer the case? I know we define the sets early on, but it would be good to remind again here. It's been a while since these sets were defined.}
\begin{definition}[Vector Field Projection \cite{pm2009ton}]
Given~$p \in\mathcal{P}\subseteq\mathbb{R}^n$ where $\mathcal{P}$ is a closed convex set and~$s\in\mathbb{R}^n$, the vector field projection~$\Pi_\mathcal{P}[p,s]$ of~$s$ at~$p$ with respect to~$\mathcal{P}$ is defined as~
\begin{equation}\label{eq:proj}
     \Pi_\mathcal{P}[p,s]:=\lim_{\delta\rightarrow 0^+}\frac{\Psi_\mathcal{P}[p+\delta s]-p}{\delta},
\end{equation}
with $\delta \in \mathbb{R}$ and $\Psi_\mathcal{P}[r]:=\arg\min_{\hat p\in\mathcal{P}}\|\hat p -r\|$ denoting the point in~$\mathcal{P}$ closest to $r\in\mathbb{R}^n$.
\end{definition}

With the projection, any trajectory of \eqref{eq:general_saddle_flow_proj} will remain in the feasible set~$\mathcal{D}=\mathcal{X} \times \mathcal{Y}$ as long as it starts with a feasible initial point. 
Meanwhile, the existence and uniqueness of the solutions of \eqref{eq:general_saddle_flow_proj} are guaranteed by \cite[Theorem 2.5]{Nagurney1996Projected} when the vector field of \eqref{eq:general_saddle_flow_proj} is Lipschitz continuous. Therefore, to work with the projected saddle flow dynamics \eqref{eq:general_saddle_flow_proj}, we enhance Assumptions~\ref{ass:paper-assumption} and \ref{ass:absolute-continuity} as follows.

\begin{assumption}
    \label{ass:Lipschitz-continuity}
The gradient of~$S(x,y)$, i.e.,~$\nabla S(x,y):=[\partial_x S(x,y),\partial_y S(x,y)]^T$, is Lipschitz continuous.
\end{assumption}

On this basis, a key enabler for generalizing the convergence results to such projected saddle flow dynamics is the following property of the projection.

\begin{lemma}\label{lm:general_projection_property}
Given any closed convex set~$\mathcal{P}\subset\mathbb{R}^n$, and~$p\in\mathcal{P}$, $p_\star \in\mathcal{P}$, $s\in\mathbb{R}^n$, the following inner product inequality always holds:
\[
\langle p_\star - p,s-\Pi_\mathcal{P}[p,s]\rangle\leq 0 \, . 
\]
\end{lemma}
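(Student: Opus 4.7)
The plan is to reduce the claim to the well-known variational characterization of the closest-point projection $\Psi_{\mathcal{P}}$ and then pass to the limit that defines the vector field projection $\Pi_{\mathcal{P}}$ in~\eqref{eq:proj}.

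First, I would invoke the standard obtuse-angle property of $\Psi_{\mathcal{P}}$: for any $r\in\mathbb{R}^n$ and any $q\in\mathcal{P}$, the minimizer $\Psi_{\mathcal{P}}[r]$ satisfies $\langle r-\Psi_{\mathcal{P}}[r],\, q-\Psi_{\mathcal{P}}[r]\rangle\leq 0$. This is the first-order optimality condition of the convex minimization problem defining $\Psi_{\mathcal{P}}[r]$, and it uses only that $\mathcal{P}$ is closed and convex. Setting $r=p+\delta s$ (with $\delta>0$) and $q=p_\star$, which are both admissible under the hypotheses, yields
\[
\langle p+\delta s-p_\delta,\; p_\star - p_\delta\rangle\leq 0,
\]
where $p_\delta := \Psi_{\mathcal{P}}[p+\delta s]$.

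Next, since $p\in\mathcal{P}$ implies $\Psi_{\mathcal{P}}[p]=p$, the definition~\eqref{eq:proj} gives the expansion $p_\delta = p+\delta\,\Pi_{\mathcal{P}}[p,s]+\delta\,\epsilon(\delta)$, with $\epsilon(\delta)\to 0$ as $\delta\to 0^+$. Substituting into the previous inequality and factoring $\delta$ from the first argument of the inner product,
\[
\delta\bigl\langle s-\Pi_{\mathcal{P}}[p,s]-\epsilon(\delta),\; p_\star-p-\delta\,\Pi_{\mathcal{P}}[p,s]-\delta\,\epsilon(\delta)\bigr\rangle\leq 0.
\]
Dividing through by $\delta>0$ and taking $\delta\to 0^+$, the second argument collapses to $p_\star-p$ and the perturbation $\epsilon(\delta)$ vanishes in the first, leaving $\langle s-\Pi_{\mathcal{P}}[p,s],\, p_\star-p\rangle\leq 0$, which is the claimed inequality after the trivial swap of arguments in the inner product.

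The main obstacle is ensuring that the passage to the limit is legitimate, i.e., that $\Pi_{\mathcal{P}}[p,s]$ is well-defined and the residual $\epsilon(\delta)$ is truly $o(1)$; this is guaranteed by the existence of the limit in~\eqref{eq:proj} for closed convex $\mathcal{P}$ (equivalently, by Lipschitz continuity of $\Psi_{\mathcal{P}}$, which ensures $p_\delta\to p$ as $\delta\to 0^+$), combined with the joint continuity of the Euclidean inner product. No strengthening of the assumptions beyond closedness and convexity of $\mathcal{P}$ is required.
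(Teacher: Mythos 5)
Your proposal is correct and follows essentially the same route as the paper: both start from the obtuse-angle variational inequality for the closest-point projection $\Psi_{\mathcal{P}}$, substitute $r=p+\delta s$, divide by $\delta>0$, and pass to the limit $\delta\to 0^+$ using the definition of $\Pi_{\mathcal{P}}$ in~\eqref{eq:proj}. Your explicit $\epsilon(\delta)$ bookkeeping and the appeal to nonexpansiveness of $\Psi_{\mathcal{P}}$ merely make precise the limit step that the paper states more tersely.
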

\begin{proof}
    We start with a variational inequality from \cite[Chapter 0.6, Corollary 1]{Aubin1984Differential}:
$$
\langle p_\star - \Psi_{\mathcal{P}}[r] , r -\Psi_{\mathcal{P}}[r] \rangle \leq 0 \, , \quad \forall r\in\mathbb{R}^n \, ,
$$
which is intuitive, considering the projection of a point onto a convex set. 

Apply the inequality to $r = p + \delta s$, and further divide both sides by $\delta>0$, then we arrive at
$$
\left\langle p_\star - \Psi_{\mathcal{P}}[p + \delta s] ,  \frac{p + \delta s -\Psi_{\mathcal{P}}[p + \delta s]}{\delta} \right \rangle \leq 0 \, .
$$
By taking the limit $\delta \rightarrow 0^{+}$, the first term of the inner product converges to $p_\star - p$ while the second term converges to $s-\Pi_\mathcal{P}[p,s]$. This completes the proof of the lemma.
\end{proof}

Following Lemma~\ref{lm:general_projection_property}, it can be readily verified that if any trajectory of \eqref{eq:saddle_flow_proj} does converges to an equilibrium point, it is also a saddle point of $S(x,y)$.

\begin{lemma}\label{lm:saddle_proj_equilibrium}
    Any equilibrium point of the projected saddle flow dynamics \eqref{eq:saddle_flow_proj} is a saddle point $(x_\star,y_\star)\in\mathcal{D}$ of $S(x,y)$, satisfying Definition~\ref{def:saddle-point}.
\end{lemma}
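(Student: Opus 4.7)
The plan is to combine the equilibrium condition with the variational inequality in Lemma~\ref{lm:general_projection_property} and then use the convexity–concavity of $S(x,y)$ to recover the saddle point inequalities in Definition~\ref{def:saddle-point}.

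First, I would unpack what it means for $(x_\star,y_\star)\in\mathcal{D}$ to be an equilibrium of \eqref{eq:general_saddle_flow_proj}. By the dynamics, equilibrium means
\[
\Pi_{\mathcal{X}}[x_\star,-\nabla_x S(x_\star,y_\star)]=0 \quad\text{and}\quad \Pi_{\mathcal{Y}}[y_\star,+\nabla_y S(x_\star,y_\star)]=0.
\]
Applying Lemma~\ref{lm:general_projection_property} with $\mathcal{P}=\mathcal{X}$, $p=x_\star$, $s=-\nabla_x S(x_\star,y_\star)$, and $p_\star$ replaced by an arbitrary $x\in\mathcal{X}$ gives $\langle x-x_\star,-\nabla_x S(x_\star,y_\star)\rangle\le 0$, i.e.,
\[
\langle \nabla_x S(x_\star,y_\star),\,x-x_\star\rangle \ge 0,\quad \forall x\in\mathcal{X}.
\]
By the analogous argument on $\mathcal{Y}$ with $s=+\nabla_y S(x_\star,y_\star)$, I obtain
\[
\langle \nabla_y S(x_\star,y_\star),\,y-y_\star\rangle \le 0,\quad \forall y\in\mathcal{Y}.
\]

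Second, I would invoke the convexity of $S(\cdot,y_\star)$ and the concavity of $S(x_\star,\cdot)$ (from Assumption~\ref{ass:paper-assumption}) to pass from these first-order variational inequalities to the saddle point inequalities. Convexity gives, for all $x\in\mathcal{X}$,
\[
S(x,y_\star)\ \ge\ S(x_\star,y_\star)+\langle \nabla_x S(x_\star,y_\star),\,x-x_\star\rangle\ \ge\ S(x_\star,y_\star),
\]
and concavity gives, for all $y\in\mathcal{Y}$,
\[
S(x_\star,y)\ \le\ S(x_\star,y_\star)+\langle \nabla_y S(x_\star,y_\star),\,y-y_\star\rangle\ \le\ S(x_\star,y_\star).
\]
Together these are exactly \eqref{eq:saddle-inequality}, so $(x_\star,y_\star)$ is a saddle point.

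There is essentially no hard obstacle here; the only subtlety is making sure Lemma~\ref{lm:general_projection_property} is applied correctly, noting that $x_\star\in\mathcal{X}$ and $y_\star\in\mathcal{Y}$ (so both serve legitimately as the base point $p$), and that feasibility of trajectories—ensured by the projection together with a feasible initial condition—makes the equilibrium automatically lie in $\mathcal{D}$. The argument is fully local at $(x_\star,y_\star)$ and does not require any strict convexity, strong convexity, or differentiability beyond what Assumption~\ref{ass:paper-assumption} already provides.
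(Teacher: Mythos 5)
Your proof is correct and follows essentially the same route as the paper: apply Lemma~\ref{lm:general_projection_property} at the equilibrium to obtain the first-order variational inequalities, then use convexity-concavity of $S$ to recover \eqref{eq:saddle-inequality}. The only cosmetic difference is that you treat both the $x$- and $y$-blocks explicitly, whereas the paper argues one block "without loss of generality."
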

\begin{proof}
    Consider an equilibrium point $(x_\star,y_\star)$ subject to 
$$
    \begin{bmatrix}
        \Pi_\mathcal{X}\left[x_\star,-\nabla_{x} S(x_\star,y_\star) \right]\\
        \Pi_\mathcal{Y}\left[y_\star,+\nabla_y S(x_\star,y_\star) \right]
    \end{bmatrix} = 0 \, .
$$
When either of the element-wise projection is inactive, it boils down to the case without the projection discussed in Section~\ref{sec:saddle-flows}. Therefore, we focus on active projections. Without loss of generality, suppose $\Pi_\mathcal{X}\left[x_\star,-\nabla_{x} S(x_\star,y_\star) \right]=0$ holds. Based on Lemma~\ref{lm:general_projection_property}, we can readily attain the saddle property from the following: 
\begin{align*}
& \langle x-x_\star,-\nabla_x S(x_\star,y_\star)-\Pi_\mathcal{X}[x_\star,-\nabla_x S(x_\star,y_\star) ]\rangle\leq 0 \, ,  \\
   \Rightarrow \ & \langle x-x_\star,-\nabla_x S(x_\star,y_\star) \rangle\leq 0 \, ,  \\
   \Rightarrow \ & S(x_\star,y_\star) - S(x,y_\star)  \leq 0 \, ,
\end{align*}
which holds for $x_\star\in \mathcal{X}$ and $\forall x\in\mathcal{X}$.
\end{proof}

Next, we formally generalize the sufficiency of observable certificates (Theorem~\ref{th:general-principle}) in Section~\ref{ssec:general_principle} and the sufficiency of strong convexity-strong concavity (Theorem~\ref{th:exponential-convergence}) in Section \ref{ssec:storng-saddle-flow} to prove the asymptotic and exponential convergence, respectively, of the projected saddle flow dynamics \eqref{eq:general_saddle_flow_proj}.

\subsection{Asymptotic Convergence of Projected Saddle Flows}

The sufficiency of observable certificates to guarantee the asymptotic convergence of the projected saddle flow dynamics~\eqref{eq:general_saddle_flow_proj} to a saddle point of~$S(x,y)$ in the feasible domain $\mathcal{D}$ is summarized as follows.
\begin{theorem}\label{th:general-principle_proj-obs}
Let Assumptions~\ref{ass:paper-assumption}, \ref{ass:auxiliary-function} and \ref{ass:Lipschitz-continuity} hold. Then starting from an arbitrary initial point $(x(0),y(0))\in\mathcal{D}$, the projected saddle flow dynamics~\eqref{eq:general_saddle_flow_proj} asymptotically converge to some saddle point~$(x_\star,y_\star)\in\mathcal{D}$ of~$S(x,y)$.
\end{theorem}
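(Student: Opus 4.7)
The plan is to mirror the structure of the proof of Theorem~\ref{th:general-principle}, adapting it to the projected dynamics via Lemma~\ref{lm:general_projection_property}. I would again use the candidate Lyapunov function $V(x,y) = \frac{1}{2}\|x-x_\star\|^2 + \frac{1}{2}\|y-y_\star\|^2$, where $(x_\star,y_\star)\in\mathcal{D}$ is the saddle point fixed by Definition~\ref{def:certificate}. Assumption~\ref{ass:Lipschitz-continuity} together with \cite[Theorem 2.5]{Nagurney1996Projected} guarantees that trajectories of \eqref{eq:general_saddle_flow_proj} from any feasible initial point exist, are unique, and remain in $\mathcal{D}$, so $V$ and the subsequent analysis are well posed along them.

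The core new step is to bound the Lie derivative of $V$ along \eqref{eq:general_saddle_flow_proj}. Applying Lemma~\ref{lm:general_projection_property} with $(\mathcal{P},p,p_\star,s) = (\mathcal{X},x,x_\star,-\nabla_x S(x,y))$ and analogously with $(\mathcal{Y},y,y_\star,\nabla_y S(x,y))$ yields
\[
(x-x_\star)^T \Pi_\mathcal{X}[x,-\nabla_x S(x,y)] \leq -(x-x_\star)^T \nabla_x S(x,y),
\]
together with the symmetric bound in $y$. Summing the two projected inner products therefore recovers exactly the expression that was upper-bounded in the proof of Theorem~\ref{th:general-principle}. Convexity-concavity of $S$ combined with the saddle inequality \eqref{eq:saddle-inequality} then gives $\dot V \leq S(x_\star,y) - S(x,y_\star) \leq 0$, with the same two-term non-positive decomposition as before.

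Next I would invoke LaSalle's invariance principle for locally Lipschitz systems with a $C^1$ Lyapunov function. Radial unboundedness of $V$ and $\dot V \leq 0$ imply that trajectories remain in the compact sub-level set $D_0(x(0),y(0)) \cap \mathcal{D}$ and converge to the largest invariant subset of $\{\dot V \equiv 0\}$. Because the upper bound on $\dot V$ is tight only when both entries of $h(x,y)$ vanish (via \eqref{eq:bounded-auxiliary-function}), $\dot V \equiv 0$ forces $h(x(t),y(t)) \equiv 0$, which by the observable-certificate property in Definition~\ref{def:certificate} forces the trajectory to be stationary; Lemma~\ref{lm:saddle_proj_equilibrium} then promotes each such equilibrium to a saddle point of $S$ in $\mathcal{D}$. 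Convergence to a single saddle point rather than merely to the invariant set is obtained by the same argument used at the end of the proof of Theorem~\ref{th:general-principle}: extract a convergent sub-sequence from the $\Omega$-limit set, re-centre $V$ at the limit $(\bar x,\bar y)$, and deduce $V(x(t),y(t)) \to 0$.

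The main obstacle I anticipate is reconciling Definition~\ref{def:certificate}, which is phrased for trajectories of the unprojected saddle flow \eqref{eq:saddle-flow}, with the projected trajectories considered here. The cleanest resolution is to re-interpret the second clause of Definition~\ref{def:certificate} as a property relative to whichever dynamics are under study, and then to verify it for the specific certificates constructed earlier: \eqref{eq:observable_cert_strict_cc}, \eqref{eq:auxiliary_function_proximal}, and the certificate in Proposition~\ref{th:regularization-asymptotic-convergence} all force, at $h=0$, algebraic identities (uniqueness of strict minimizers/maximizers, proximal-optimality, or $x=\hat x,\ y=\hat y$) that make the projected right-hand side of \eqref{eq:general_saddle_flow_proj} vanish by direct inspection via Lemma~\ref{lm:general_projection_property}. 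I expect no further technical difficulty beyond this observation.
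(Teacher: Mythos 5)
Your proposal follows essentially the same route as the paper: the same Lyapunov function $V$, Lemma~\ref{lm:general_projection_property} applied with $(\mathcal{X},x,x_\star,-\nabla_x S)$ and $(\mathcal{Y},y,y_\star,\nabla_y S)$ to absorb the projection terms into non-positive corrections, the same chain of inequalities ending in $\dot V \le S(x_\star,y)-S(x,y_\star)\le 0$, an invariance argument reducing the limit set to equilibria via the certificate, and the same convergent-subsequence/re-centering step to get convergence to a single saddle point. Your flagged concern about Definition~\ref{def:certificate} being phrased for the unprojected flow~\eqref{eq:saddle-flow} is legitimate; the paper itself only asserts that Assumption~\ref{ass:auxiliary-function} ``basically still implies'' $\mathbb{S}\subset\mathbb{H}\subset\mathbb{E}$, so your plan to re-verify the observability clause for the projected dynamics on each concrete certificate is, if anything, more careful than the published argument.

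The one concrete gap is your appeal to ``LaSalle's invariance principle for locally Lipschitz systems.'' The projected vector field $\Pi_\mathcal{X}[x,\cdot]$, $\Pi_\mathcal{Y}[y,\cdot]$ is discontinuous in the state: it switches abruptly when a trajectory reaches or leaves the boundary of the polyhedron, so the right-hand side of~\eqref{eq:general_saddle_flow_proj} is not locally Lipschitz (nor even continuous) and the classical LaSalle theorem does not apply. The paper handles this by invoking the invariance principle for Caratheodory/nonpathological systems~\cite{bacciotti2006nonpathological} and by defining the invariant set only over times outside the (measure-zero) set $\mathbb{T}$ of projection on--off switching epochs, i.e., requiring $\dot V(x(t),y(t))\equiv 0$ for $t\in\mathbb{R}_{\ge 0}\setminus\mathbb{T}$. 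Your proof would need this substitution to be rigorous; with it, the rest of your argument goes through unchanged.
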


\begin{proof}
Consider the same quadratic Lyapunov function~\eqref{eq:Vxy}. Taking its Lie derivative along the trajectory~$(x(t),y(t))$ of~\eqref{eq:general_saddle_flow_proj} yields
\begin{align*}
    \dot V ={}&(x-x_\star)^T\dot x+(y-y_\star)^T\dot y\\
    ={}&(x-x_\star)^T\Pi_\mathcal{X}\left[x,-\nabla_{x} S(x,y) \right]\\
    &+(y-y_\star)^T\Pi_\mathcal{Y}\left[y,+\nabla_y S(x,y) \right]\\
    ={}&(x_\star-x)^T\nabla_xS(x,y)-(y_\star-y)^T\nabla_yS(x,y) \\
     \quad{}& \;  + \underbrace{ (x_\star-x)^T \left( - \nabla_xS(x,y) -\Pi_\mathcal{X}\left[x,-\nabla_x S(x,y)\right] \right) }_{\leq 0}\\
     \quad{}& \;  + \underbrace{ (y_\star-y)^T \left( \nabla_yS(x,y)-\Pi_\mathcal{Y}\left[y,\nabla_y S(x,y)\right] \right) }_{\leq 0}\\
     \leq{}& (x_\star-x)^T\nabla_xS(x,y)-(y_\star-y)^T\nabla_yS(x,y)\\
    \leq{}&
    S(x_\star,y)-S(x,y)-
    (S(x,y_\star)-S(x,y))\\
    ={}&S(x_\star,y)-S(x,y_\star)
    \\
    ={}&  \underbrace{S(x_\star,y) -S(x_\star,y_\star)}_{\leq0} + \underbrace{S(x_\star,y_\star) - S(x,y_\star)}_{\leq0} \, , %\\    ={}& -\mathbf{1}^Th(x,y),
\end{align*}
where the key step is to use Lemma~\ref{lm:general_projection_property} in the first inequality.

We then define the largest invariant set between the on-off switches of the projection as 
\begin{small}
\beq
\mathbb{S} := D_0(x(0),y(0)) \cap\left\{ (x,y)  \ \vert \  \dot V(x(t),y(t)) \equiv 0  , ~t\in \mathbb{R}_{\ge 0}\backslash \mathbb{T} \right\} \, ,
\eeq
\end{small}%
where $\mathbb{T}$ consists of all the time epochs when the projection switches on and off. According to the invariance principle for Caratheodory systems~\cite{bacciotti2006nonpathological}, any trajectory of \eqref{eq:general_saddle_flow_proj} converges to $\mathbb{S}$. 
Then Assumption~\ref{ass:auxiliary-function} basically still implies
$\mathbb{S} \subset \mathbb{H} \subset \mathbb{E}$, i.e., the invariant set contains only equilibrium points. Following the same discussion in the proof of Theorem~\ref{th:general-principle}, any trajectory of \eqref{eq:general_saddle_flow_proj} indeed asymptotically converges to an equilibrium point, i.e., a saddle point of $S(x,y)$.
\end{proof}

%%%%%%%%%%%%%%%%%%%%%%%%%%%%%%%%%%%%%%%

\subsection{Exponential Convergence of Projected Saddle Flows}\label{ssec:exp_convergence_projected_saddle}

We then generalize the role of a strongly convex-strongly concave objective function in the exponential convergence of the projected saddle flow dynamics~\eqref{eq:general_saddle_flow_proj}.

\begin{theorem}\label{th:general_exponential-convergence_proj}
Let Assumptions~\ref{ass:paper-assumption},~\ref{ass:strong-convexity} and~\ref{ass:Lipschitz-continuity} hold. Then the projected saddle flow dynamics~\eqref{eq:general_saddle_flow_proj} are globally exponentially stable.
More precisely, given the (unique) saddle point $z_\star$ and any initial point $z(0)\in\mathcal{D}$ with $z=(x,y)$,
\[
\|z(t)-z_\star\|\leq \|z(0)-z_\star\|e^{-ct}
\]
holds with the rate~$$c=\min\{\mu,q\}>0\, .$$
\end{theorem}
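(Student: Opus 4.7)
The plan is to combine the Lyapunov argument from Theorem \ref{th:exponential-convergence} with the projection-handling technique from the proof of Theorem \ref{th:general-principle_proj-obs}. I would use the same quadratic Lyapunov function $V(z) = \frac{1}{2}\|z-z_\star\|^2$ centered at the unique saddle point $z_\star=(x_\star,y_\star)$, which is guaranteed by Assumption \ref{ass:strong-convexity} and must lie in $\mathcal{D}$ and coincide with the unique equilibrium of \eqref{eq:general_saddle_flow_proj}.

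The first step is to compute $\dot V$ along \eqref{eq:general_saddle_flow_proj} and decompose each projected term into the unprojected gradient plus a projection-correction term, exactly as in the proof of Theorem \ref{th:general-principle_proj-obs}. Applying Lemma \ref{lm:general_projection_property} with $p=x$, $p_\star=x_\star$, $s=-\nabla_x S(x,y)$, and analogously for $y$, shows that the correction terms are non-positive, yielding
\begin{equation*}
\dot V(z) \le (z-z_\star)^T F(z),
\end{equation*}
with $F(z)$ as in \eqref{eq:z-dynamics}. This replicates the starting expression of the Theorem \ref{th:exponential-convergence} proof up to one subtlety.

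The subtlety is that, unlike in the unprojected case, the unprojected field $F(z_\star)$ need not vanish when the projection is active at $z_\star$. The plan to overcome this is to invoke Lemma \ref{lm:general_projection_property} a second time, now pivoting at $z_\star$: since $z_\star$ is an equilibrium, $\Pi_\mathcal{X}[x_\star,-\nabla_x S(z_\star)]=0$ and $\Pi_\mathcal{Y}[y_\star,\nabla_y S(z_\star)]=0$, so the variational inequality yields $(z-z_\star)^T F(z_\star)\le 0$ for every $z\in\mathcal{D}$. Consequently,
\begin{equation*}
\dot V(z) \le (z-z_\star)^T F(z) \le (z-z_\star)^T\bigl(F(z)-F(z_\star)\bigr),
\end{equation*}
which is precisely the expression that the Theorem \ref{th:exponential-convergence} argument bounds. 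I expect this step, reconciling a possibly nonzero $F(z_\star)$ with the block-Hessian reasoning, to be the main obstacle, and the variational half of Lemma \ref{lm:general_projection_property} is what makes it cleanly go through.

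From here the proof proceeds verbatim along the lines of Theorem \ref{th:exponential-convergence}: since Assumption \ref{ass:Lipschitz-continuity} is stronger than absolute continuity, I apply the fundamental theorem of calculus to write $F(z)-F(z_\star) = \int_0^1 \partial_z F(z(s))(z-z_\star)\,ds$, symmetrize the integrand to recover the block-diagonal form \eqref{eq:S-block-hessians}, and invoke Assumption \ref{ass:strong-convexity} to obtain $\dot V(z) \le -\mu\|x-x_\star\|^2-q\|y-y_\star\|^2 \le -2cV(z)$ with $c=\min\{\mu,q\}$. The Comparison Lemma then delivers the stated exponential bound $\|z(t)-z_\star\|\le \|z(0)-z_\star\|e^{-ct}$.
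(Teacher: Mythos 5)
Your proposal is correct and follows the same route as the paper: the quadratic Lyapunov function, Lemma~\ref{lm:general_projection_property} to discard the projection-correction terms, the fundamental-theorem-of-calculus representation of the vector field, and the Comparison Lemma. The one place you go beyond the paper is the second application of Lemma~\ref{lm:general_projection_property} pivoted at $z_\star$ to establish $(z-z_\star)^T F(z_\star)\le 0$; the paper's proof simply says ``following the analysis in the proof of Theorem~\ref{th:exponential-convergence}'' and thereby implicitly reuses $F(z_\star)=0$, which need not hold when the projection is active at the saddle point (e.g., a Lagrangian with an inactive inequality constraint, where $\nabla_y S(z_\star)\ne 0$ on the boundary of $\mathbb{R}^m_{\ge0}$). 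Your extra step is exactly what is needed to make the reduction to the integral bound on $F(z)-F(z_\star)$ rigorous, so your write-up is, if anything, more complete than the paper's.
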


\begin{proof}
Lemma~\ref{lm:general_projection_property} allows the proof pipeline of Theorem~\ref{th:exponential-convergence} to still apply here. In particular, given $z=(x,y)$, consider again the quadratic Lyapunov function
\[
V(z)=\frac{1}{2}\|z-z_\star\|^2=\frac{1}{2}\|x-x_\star\|^2+\frac{1}{2}\|y-y_\star\|^2 \, .
\]
From the proof of Theorem~\ref{th:general-principle_proj-obs}, we already have its Lie derivative with respect to \eqref{eq:general_saddle_flow_proj} that satisfies
\begin{align*}
    \dot V(z) \leq{}& (x_\star-x)^T\nabla_xS(x,y)-(y_\star-y)^T\nabla_yS(x,y)\\
    \leq{} & (z-z_\star)^TF(z) \, .
\end{align*}
% where 
% \begin{equation}
%     F(z) := \begin{bmatrix}
%     -\nabla_x S(x,y)\\
%     +\nabla_y S(x,y)
%     \end{bmatrix}
% \end{equation}
% is defined following Theorem~\ref{th:exponential-convergence}. %The rest of the proof is exactly the same 
Following the analysis in the proof of Theorem~\ref{th:exponential-convergence}, we can directly obtain
\begin{small}
    \begin{equation*}
         \dot V(z)\leq-\mu\|x-x_\star\|^2-q\|y-y_\star\|^2
        \leq -c\|z-z_\star\|^2=-2cV(z) \, .
    \end{equation*}
\end{small}%
Using the Comparison Lemma defined on the Dini derivative of non-differentiable functions \cite{khalil2002nonlinear}, we can still derive the exponential convergence:
\begin{align*}
&V(z(t))\leq e^{-2ct}V(z(0))\\
\iff & \|z(t)-z_\star\|^2\leq e^{-2ct}\|z(0)-z_\star\|^2\ \\
\iff & \|z(t)-z_\star\|\leq e^{-ct}\|z(0)-z_\star\| \, .\quad &
\end{align*}\qedhere
\end{proof}
%\enrique{The proof in Theorem 10 requires the comparison Lemma. Is this lemma almost the same for the case where the dynamics are discontinuous? We probably need to use a comparison lemma based on the Dini derivative. }

Theorems \ref{th:general-principle_proj-obs} and \ref{th:general_exponential-convergence_proj} significantly broaden the use of the proposed conditions.
%allows the generalization of the algorithms proposed for equality-constrained optimization problems to convex programs with inequality constraints. 
In the next section we will demonstrate how they could be applied to algorithm analysis and design for solving constrained convex programs.
%%%%%%%%%%%%%%%%%%%%%%%%%%%%%%%%%%%%%%%%%%%%%%%%%%%%%%%%%%%%%%%%%%%%%%%%%%%%%%%%

% -----------------------------------------------------------------------------------------

\section{Applications to Constrained Convex Optimization}\label{sec:applications}

We present four algorithms for constrained convex optimization. Three of them are novel and inspired by the proposed convergence results. Their asymptotic/exponential stability either requires weaker conditions and/or exhibits superior performance, i.e., faster convergence rate estimates. For an existing algorithm based on proximal primal-dual dynamics, we use our convergence results to develop a novel bound on its convergence rate.
All the algorithms are rooted in different transformations of Lagrangian functions that aim to meet our convergence conditions while preserving saddle points. 
This provides a systematic approach to algorithm design.

%\enrique{The beginning of this paragraph makes me think that the four algorithms that we present are all new. However, we end section B stating that our results are similar to the results of [31]. Is the proximal algorithm new? My gut feeling is that it is not. 

%We should do a better job, in each section, stating whether the algorithm appeared before or not, and whether we provide more general conditions for convergence, or just a new way of proving their convergence.}

The Lagrangian of an inequality-constrained convex program is, in general, convex in~$x\in\mathbb{R}^n$ and linear in~$y\in\mathbb{R}^m_{\ge 0}$.  
Due to non-negative dual variables, we will, in particular, focus on the vector field projection of the form $[s]_{p}^+:= \Pi_{\mathbb{R}_{\ge 0}^{m}}[p,s]$.
% is $\mathbb{R}^m_{\ge 0}$. 
In this case, this projection can be implemented element-wise, i.e., 
% and can be simplified as $[s]_{p}^+:= \Pi_{\mathbb{R}_{\ge 0}^{m}}[p,s]$, 
$$
[s_j]_{p_j}^+  =     \left\{
    \begin{aligned}
    & s_j\, ,   \qquad\qquad \quad \ \textrm{if~}p_j > 0\, , \\
    & \max\left\{s_j, 0 \right\}\, ,  \quad \textrm{otherwise}\, ,
    \end{aligned}\right.
$$
for $j=1,2,\dots, m$, given $p\in \mathbb{R}_{\ge 0}^{m} $ and $s \in \mathbb{R}^{m}$.
Therefore, in this section we mainly look at the particular projected saddle flow dynamics:
\begin{subequations}\label{eq:saddle_flow_proj}
\begin{align}
    \dot x & = -\nabla_x S(x,y)\, , \\
    \dot y & = \left[+\nabla_y S(x,y) \right]^+_y\, .
\end{align}
\end{subequations}
Along any solution trajectories, $y(t)$ will be constrained to be non-negative as long as it starts with a non-negative initial point. 
%%%%%%%%%%%%%%%%%%%%%%%%%%%%%%%%%%%%%%%%%%%%%%

\subsection{Augmented Primal-Dual Dynamics}

Theorem~\ref{th:general-principle_proj-obs} allows the state-augmentation method in Section~\ref{ssec:augmentation_regularization} to work for the projected saddle flow dynamics \eqref{eq:saddle_flow_proj}, given the same observable certificate
\[ \hat{h}(x,\hat{x},y,\hat{y})=
\begin{bmatrix}
     \frac{\rho}{2}\Vert y-\hat{y}\Vert^2 \\
     \frac{\rho}{2} \Vert x-\hat{x} \Vert^2
\end{bmatrix}  \, ,
\]
which consists of only the regularization terms, to satisfy Assumption~\ref{ass:auxiliary-function}. 
One interesting application is a novel augmented primal-dual algorithm to solve linear programs. 

Consider the following problem: 
\begin{subequations}\label{eq:constrainedLP}
\begin{eqnarray}
\min_{x\in\mathbb{R}^n} && c^T x \\
\mathrm{s.t.} && Ax-b \le 0 \ : \ y \in\mathbb{R}^m_{\ge0}
\end{eqnarray}
\end{subequations}
where $c\in\mathbb{R}^n$, $A\in\mathbb{R}^{m\times n}$, and $b\in\mathbb{R}^m$ are given.
The problem \eqref{eq:constrainedLP} corresponds to a bi-linear Lagrangian
$$L(x,y):=c^Tx + y^T(Ax-b)\, .$$
We introduce virtual variables~$\hat{x}\in\mathbb{R}^n$,~$\hat{y}\in\mathbb{R}^m$ to define the following augmented Lagrangian:
\begin{small}
    \begin{equation*}
        \hat L(x,\hat{x},y,\hat{y}):= \frac{\rho}{2}\Vert x-\hat{x} \Vert^2 + c^Tx + y^T(Ax-b) - \frac{\rho}{2}\Vert y-\hat{y} \Vert^2\, .
    \end{equation*}
\end{small}%
From Proposition~\ref{th:regularization-asymptotic-convergence} and Theorem~\ref{th:general-principle_proj-obs}, the projected saddle flow dynamics \eqref{eq:saddle_flow_proj} of the augmented Lagrangian suggest an algorithm that globally asymptotically converges to one of its saddle points:
\begin{subequations}\label{eq:reg-saddle-flow_proj}
\begin{align}
    \dot x &
    % =-\,\nabla_x S(x,\hat{x}, y,\hat{y}) 
    =-c-A^Ty-\rho(x-\hat{x})
    \,,\label{eq:reg-saddle-x_proj}\\
    \dot{\hat{x}} &
    % = -\,\nabla_{z} S(x,z, y,w)
    =\rho(x-\hat{x})
    \,,\label{eq:reg-saddle-z_proj}\\
    \dot y &
    % = +\,\nabla_y S(x,z, y,w)
    = \left[Ax-b - \rho(y-\hat{y}) \right]_y^+
    \,,
    \label{eq:reg-saddle-y_proj}\\
    \dot{\hat{y}} &
    % =+\,\nabla_{w} S(x,z, y,w)
    = \rho(y-\hat{y})\,.
    \label{eq:reg-saddle-w_proj}
\end{align}
\end{subequations}
The augmented primal-dual dynamics \eqref{eq:reg-saddle-flow_proj} maintain the distributed structure where each agent~$i=1,2,\dots,n$ may locally manage
\begin{subequations}\label{eq:reg-saddle-distributed}
\begin{align}
    \dot x_i &
    % =-\,\nabla_x S(x,z, y,w) 
    =-c_i-A_i^T y-\rho(x_i-\hat{x}_i)
    \,,\\
    \dot{\hat{x}}_i &
    % = -\,\nabla_{z} S(x,z, y,w)
    =\rho(x_i-\hat{x}_i)
    \,,
\end{align}
and/or each dual agent~$j=1,2,\dots,m$ may locally manage
\begin{align}
    \dot y_j &
    % = +\,\nabla_y S(x,z, y,w)
    = \left[A_j x-b_j - \rho(y_j-\hat{y}_j) \right]_{y_j}^+
    \,,
    \\
    \dot{\hat{y}}_j &
    % =+\,\nabla_{w} S(x,z, y,w)
    = \rho(y_j-\hat{y}_j)\,,
\end{align}
\end{subequations}
with~$A_i$ and~$A_j$ denoting the~$i^{\text{th}}$ column and the~$j^{\text{th}}$ row of~$A$, respectively.
% the use of the augmented and projected version
Lemma~\ref{th:saddle-characterization} implies that any saddle point of~$\hat L (x,\hat{x},y,\hat{y})$ corresponds to a saddle point of~$L(x,y)$, i.e., an optimal primal-dual solution to the linear program~\eqref{eq:constrainedLP}. Therefore, the augmented primal-dual dynamics \eqref{eq:reg-saddle-flow_proj} can be used as a distributed linear programming solver.

 \subsection{Proximal Primal-Dual Dynamics}\label{ssec:proximal_primal_dual}

Likewise, Theorem~\ref{th:general_exponential-convergence_proj} suggests that the proximal saddle flow dynamics in Section~\ref{ssec:proximal_reg} can be extended to handle inequality-constrained convex programs, leading to the proximal primal-dual algorithm proposed in \cite{Goldsztajn2020proximal}. However, rather than the asymptotic stability established therein, our theories enable a novel analysis of the algorithm that provides a bound on its convergence rate.

Consider the following problem:
\begin{subequations}\label{eq:convex_constrained_cvx}
\begin{eqnarray}
\min_{x\in\mathbb{R}^n} && f(x) \\
\mathrm{s.t.} && g(x) \le 0 \ : \ y \in\mathbb{R}^m_{\ge 0}
\end{eqnarray}
\end{subequations}
Here~$f : \mathbb{R}^n \mapsto \mathbb{R}$ is continuously differentiable and convex. $g:\mathbb{R}^n \mapsto \mathbb{R}^m$ is a vector-valued function, and $g_j(x)$, $j=1,2,\dots,m$, is
convex with locally Lipschitz gradient.
We define the standard Lagrangian:
$$
L(x,y):=f(x)+ y^T g(x) \, ,
$$
and its surrogate function using proximal regularization:
\begin{equation}\label{eq:reduced_Lagrangian_convex_constrianed_cvx}
\begin{aligned}
    \tilde L(u,y) : =& \min_{x\in\mathbb{R}^n} \left\{ f(x)+ y^T g(x) + \frac{\rho}{2}\Vert x-u \Vert^2 \right \}\\
    = & f(\tilde x(u,y))+  y^T g(\tilde x(u,y))+ \frac{\rho}{2}\Vert \tilde x(u,y)-u \Vert^2  \, ,
\end{aligned}
\end{equation}
where~$\tilde x(u,y)$ is the unique minimizer or the unique solution to the following first-order optimality condition:
\begin{equation}
    H(x,u,y)  = 0 \, ,
\end{equation}
with%~$F(x,u,y)$ being an implicit function defined as 
\begin{equation}
H(x,u,y) : = \nabla f(x)  + [\partial_x g(x)]^T y + \rho(x-u)\, .
\end{equation}
Note that here $\partial_x g(x)$ is the Jacobian matrix of $g(x)$. 
%We further define~$H_x,H_u,H_y$ as the partial derivatives of~$H(x,u,y)$ with respect to~$x,u,y$, respectively.

%Following~\cite{Goldsztajn2020proximal}[Theorem 2], the convexity-concavity and continuous differentiability of~$S(u,y)$ still apply. 
The proximal primal-dual dynamics, i.e., the projected saddle flow dynamics \eqref{eq:saddle_flow_proj} of $\tilde L(u,y)$, 
\begin{subequations}\label{eq:semi-primal-dual}
    \begin{align}
        \dot u &= \rho u - \rho \tilde x(u,y) \, , \\
        \dot y &= \big [g(\tilde x(u,y))\big]_y^+ \, ,
    \end{align}
\end{subequations}
enjoy exponential convergence to a (unique) saddle point if the following assumption holds.
\begin{assumption}\label{ass:convex_constrained_primal_dual_exp_assumption}
 The function~$f(x)$ is~$\mu$-strongly convex with~$l$-Lipschitz gradient, i.e.,~$lI \succeq \nabla^2 f(x) \succeq \mu I$ wherever~$\nabla^2 f(x)$ is defined. 
 The Jacobian matrix~$\partial_x g(x)$ is full row rank with~$\sigma I\succeq \partial_x g(x)[\partial_x g(x)]^T \succeq \kappa I$, and locally Lipschitz continuous row-wise.
\end{assumption}

\begin{remark}
    $\partial_x g(x)$ being full row rank implies that the linear independence constraint qualification is satisfied, and the saddle point is unique.
\end{remark}

Given Assumption~\ref{ass:convex_constrained_primal_dual_exp_assumption}, we can derive the partial derivatives of~$\tilde x(u,y)$ as 
\begin{small}
\begin{subequations}
\beq
\begin{aligned}
    \partial_u \tilde x &= - (\partial_x H)^{-1}\partial_u H \\
    &= \rho \bigg(\nabla^2 f(x) + \rho I+ \sum_{j=1}^m y_j \nabla^2 g_j(x)\bigg)^{-1} \, ,
\end{aligned}
\eeq
\begin{equation}
\begin{aligned}
    \partial_y \tilde x &= - (\partial_x H)^{-1} \partial_y H \\
    & = - \bigg(\nabla^2 f(x) + \rho I+ \sum_{j=1}^m y_j \nabla^2 g_j(x)\bigg)^{-1} [\partial_x g(x)]^T  \, ,
\end{aligned}
\end{equation}
\end{subequations}
\end{small}%
which imply the second-order partial derivatives of~$\tilde L(u,y)$:
\begin{subequations}\label{eq:proj_proximal_second_order_derivative}
\beq
\partial^2_{uu} \tilde L = \rho I - \rho  \partial_u \tilde x    \succeq \frac{\mu\rho}{\mu+\rho} I \succ 0 \, ,
\eeq
\beq
\begin{aligned}
\partial^2_{yy} \tilde L & =  \partial_x g(x) \partial_y \tilde x \\
& \preceq -   \partial_x g(x) \bigg({l I+\rho I + \sum_{j=1}^m y_j \nabla^2 g_j(x) }\bigg)^{-1} [ \partial_x g(x)]^T \\
& \prec  0  \, .
\end{aligned}
\eeq
\end{subequations}
Here we have used the fact of~$y_j \ge 0$ and~$\nabla^2 g_j(x) \succeq 0$,~$\forall j=1,2,\dots,m$.

Recall the analysis in~\eqref{eq:dotVz2} used to establish saddle flows' exponential convergence. Given the saddle point $(u_\star,y_\star)$ of $\tilde L(u,y)$, \eqref{eq:proj_proximal_second_order_derivative} suffices to guarantee that the following quadratic Lyapunov function 
$$
    \tilde V(u,y) := \frac{1}{2} \Vert u-u_\star \Vert^2 + \frac{1}{2}\Vert y-y_\star\Vert^2 
$$
is non-increasing along any trajectories of the proximal primal-dual dynamics \eqref{eq:semi-primal-dual}. 
As a result, given an arbitrary initial point~$(u(0),y(0))$, the trajectory of~\eqref{eq:semi-primal-dual} is bounded and contained in an invariant domain
$$
    \tilde D_0(u(0),y(0)) := \left \{(u,y) \ | \  \tilde V(u,y) \le \tilde V(u(0),y(0)) \right\} \, ,
$$
which is compact and convex since~$\tilde V(u,y)$ is radially unbounded. 
Suppose
$$\Vert y \Vert \le  \zeta(u(0),y(0)) \, , $$ 
$$\nabla^2 g_j(x) \preceq \gamma(u(0),y(0)) I\, ,~\forall j =1,2, \dots, m\, , $$ hold for any point~$(u,y)\in \tilde D_0(u(0),y(0))$. Then along the trajectory starting from~$(u(0),y(0))$,
\begin{equation}
    \partial^2_{yy} \tilde L \preceq - \frac{\kappa}{l+\rho + m \zeta(u(0),y(0))  \gamma(u(0),y(0)) } I  \prec  0
\end{equation}
always holds.

Akin to Theorem~\ref{th:general_exponential-convergence_proj}, it can thus be shown that the proximal primal-dual dynamics \eqref{eq:semi-primal-dual} are semi-globally exponentially stable. In other words, given the unique saddle point $w_\star$ and any arbitrary initial point~$w(0)\in\mathbb{R}^n\times \mathbb{R}^m_{\ge 0}$ with $w=(u,y)$,
\[
\|w(t)-w_\star\|\leq \|w(0)-w_\star\|e^{-ct}
\]
holds with the rate 
\begin{equation}\label{eq:semi-EC-rate}
    c=\min \left \{\frac{\mu\rho}{\mu+\rho}, \frac{\kappa}{l+ \rho + m  \zeta(w(0)) \gamma(w(0))} \right \}>0 \, ,
\end{equation}
which is initial point-dependent.
Meanwhile, according to \cite[Theorem 5]{Goldsztajn2020proximal}, the unique saddle point of $\tilde L(u,y)$ is also the saddle point of $L(x,y)$, i.e., the unique optimal primal-dual solution to the convex program~\eqref{eq:convex_constrained_cvx}.

\subsection{Preconditioned Primal-Dual Dynamics}\label{ssec:exp_convergence_regularized_pdd}
In light of the strong convexity-strong concavity condition in Theorems~\ref{th:exponential-convergence} and \ref{th:general_exponential-convergence_proj}, we have discussed the use of the proximal methods in Sections~\ref{ssec:proximal_reg} and \ref{ssec:proximal_primal_dual} to achieve the exponential convergence of saddle flows. As observed in both cases, there is a bottleneck for the estimated convergence rate 
$$
c\le \frac{\mu \rho}{\mu +\rho} < \mu \, ,
$$
i.e., the strong convexity constant $\mu$.
%Note that the strong convexity constant $\mu$ is a bottleneck for the rate that cannot be reached by the proximal methods. 
We propose in this subsection a new algorithm that employs a change of variables to alter the condition number of the integrand (matrix) in \eqref{eq:dotVz2} such that an accelerated convergence rate as fast as $\mu$ can be achieved.

Consider the following problem:
\begin{subequations}\label{eq:constrained_cvx_ineq}
\begin{eqnarray}
\min_{x\in\mathbb{R}^n} && f(x) \\
\mathrm{s.t.} && Ax-b \leq0 \ : \  y \in \mathbb{R}^m_{\ge 0} 
\end{eqnarray}
\end{subequations}
where $f : \mathbb{R}^n \mapsto \mathbb{R}$ is continuously differentiable and convex, and $A\in\mathbb{R}^{m\times n}$ and $b\in\mathbb{R}^m$ are given in the affine constraints.
Define its (weighted) Lagrangian as 
\begin{equation}\label{eq:adjustedLagrangian_pddyn}
    L(x,y) : = f(x) + \eta y^T(Ax-b),
\end{equation}
where~$\eta > 0$ is a constant. 
We propose the following change of variables (with abuse of the notation $u$)
\begin{equation}\label{eq:change of var}
    u:= x+\alpha A^Ty
\end{equation}
to transform~$L(x,y)$ into 
\begin{equation}\label{eq:S(u,y)-flow}
     \tilde L(u,y) 
     %\bar L(u,y) 
     := f(u-\alpha A^Ty) + \eta y^T(Au-b) - \eta \alpha  \|A^Ty\|^2 \, ,
\end{equation}
where~$\alpha >0$ is also a constant. This creates an extra quadratic term in $y$ and obviously, $\tilde L(u,y)$ is still convex-concave. In fact, 
$(x_\star,y_\star)$ is a saddle point of~$L(x,y)$ if and only if $(u_\star,y_\star)$ is a saddle point of~$\tilde L(u,y)$ with~$u_\star= x_\star + \alpha A^T y_\star$, which follows from the equivalence below:
%\end{lemma}
\begin{align*}
\left\{
   \begin{aligned}
  & \nabla_x L(x_\star,y_\star)  =0  \\
  & \left[\nabla_y L(x_\star,y_\star)\right]_y^+  =0 
   \end{aligned} \right. 
 \iff
  \left\{
   \begin{aligned}
  & \nabla_u \tilde L(u_\star,y_\star)  =0 \\
  & \left[\nabla_y \tilde L(u_\star,y_\star)\right]_y^+  =0 
   \end{aligned} \right.  
\end{align*}

Since the problem \eqref{eq:constrained_cvx_ineq} is a special case of \eqref{eq:convex_constrained_cvx}, we show that given the same assumption below (Assumption~\ref{ass:convex_constrained_primal_dual_exp_assumption} for the problem \eqref{eq:constrained_cvx_ineq}), the projected saddle flow dynamics \eqref{eq:saddle_flow_proj} of $\tilde L(u,y)$,
\begin{subequations}\label{eq:transformed_pddyn}
\begin{align}
    \dot u &= - \nabla f(u-\alpha A^T y)   -  \eta A^T y \, ,    \\
    \dot y &= [-\alpha A \nabla f(u-\alpha A^T y) + \eta (A u-b) - 2 \eta\alpha AA^Ty]_y^+ \, ,
\end{align}
\end{subequations}
converge exponentially to a unique saddle point with a rate as fast as $\mu$.

\begin{assumption}\label{ass:primal_dual_exp_assumption}
 The function~$f(x)$ is~$\mu$-strongly convex with~$l$-Lipschitz gradient, i.e.,~$lI \succeq \nabla^2 f(x) \succeq \mu I$ wherever~$\nabla^2 f(x)$ is defined. The matrix~$A$ is full row rank with~$\sigma I\succeq   A A^T \succeq \kappa I$.
\end{assumption}
% \begin{remark}
%     $A$ is full row rank implies that LICQ is satisfied. Therefore, the uniqueness of the saddle point is guaranteed.
% \end{remark}

%Given the structure of~$S(u,y)$, the following theorem characterizes a sufficient condition that guarantees it to be a strong saddle function.

Given Assumption~\ref{ass:primal_dual_exp_assumption}, we can readily verify the strong convexity-strong concavity of $\tilde L(u,y)$, as along as the tunable constants $\eta$ and $\alpha$ are properly chosen subject to $2\eta> l\alpha$:
\begin{subequations}\label{eq:conditioned_primal_dual_strong}
\begin{equation}
\partial^2_{uu} \tilde L = \nabla^2 f(u-\alpha A^Ty) \succeq \mu I \succ 0 \, ,
\end{equation}
\begin{equation}
\begin{aligned}
\partial^2_{yy} \tilde L &= \alpha^2   A\nabla^2 f(u-\alpha A^Ty)A^T - 2\eta\alpha  A A^T  \\
&=  A \left( \alpha^2 \nabla^2 f(u-\alpha A^T y) - 2\eta\alpha I  \right) A^T \\
&\preceq  A ( l\alpha^2 -2\eta\alpha)IA^T \\
& \preceq -\left(2{\eta}{\alpha} - l\alpha^2\right) \kappa I \\
& \prec 0  \, .
\end{aligned}
\end{equation}\end{subequations}
% \end{proof}
Following Theorem~\ref{th:general_exponential-convergence_proj}, \eqref{eq:conditioned_primal_dual_strong} implies that the preconditioned primal-dual dynamics \eqref{eq:transformed_pddyn} are globally exponentially stable, i.e., given the unique saddle point $w_\star$ and any arbitrary initial point~$w(0)\in\mathbb{R}^n\times \mathbb{R}^m_{\ge 0}$ with $w=(u,y)$, %\begin{corollary}\label{cor:regularized_primal_dual_strong_saddle}
%Suppose~$S(u,y)$ has at least one saddle point~$(u_\star,y_\star)$, then its projected saddle flow dynamics
%are globally exponentially stable. More precisely, 
\[
\|w(t)-w_\star\|\leq \|w(0)-w_\star\|e^{-ct}
\]
holds with the rate~$$c=\min \left \{\mu,\left( 2{\eta}\alpha - l\alpha^2\right) \kappa \right \}>0\, .$$

\begin{remark}
We can always pick~$\eta$ and $\alpha$ to satisfy~$2\eta > l \alpha + \frac{\mu}{\kappa\alpha}$, i.e., $\left(2{\eta}{\alpha} - l\alpha^2\right) \kappa > \mu$, such that 
$$
 c =\min \left \{\mu,( 2 \eta\alpha-  l\alpha^2 )\kappa \right \} = \mu
$$
holds. In this way, the preconditioned primal-dual dynamics \eqref{eq:transformed_pddyn} 
are guaranteed to converge with a faster rate than the one obtained for the proximal primal-dual dynamics \eqref{eq:semi-primal-dual}.
\end{remark}

Due to the linear coordinate transformation, we can also directly work with the original primal-dual variables $(x,y)$ by converting the preconditioned primal-dual dynamics \eqref{eq:transformed_pddyn} to the following:
\begin{subequations}\label{eq:regularized_pddyn}
\begin{align}
        \dot x =& -\alpha A^T[-\alpha A \left( \nabla f(x)  +  \eta A^Ty\right) + \eta (A x-b)]_y^+   \\&-\nabla f(x)-\eta A^Ty \, ,
         \\
    \dot y =& [-\alpha A \left( \nabla f(x)  +  \eta A^Ty\right) + \eta (A x-b)]_y^+ \, .
\end{align}
\end{subequations}
Essentially \eqref{eq:regularized_pddyn}, inspired by
$$\dot x : = \dot u - \alpha A^T \dot y \, ,$$
yields a trajectory $(x(t),y(t))$ that satisfies 
$$x(t) \equiv u(t) -\alpha A^T y(t) $$
for any trajectory $(u(t),y(t))$ of the preconditioned primal-dual dynamics \eqref{eq:transformed_pddyn}. 
Thus, given Assumption~\ref{ass:primal_dual_exp_assumption}, \eqref{eq:regularized_pddyn} exponentially converges to the unique saddle point $(x_\star,y_\star)$ of $L(x,y)$ with $x_\star = u_\star - \alpha A^T y_\star$. Note that $(x_\star,\eta y_\star)$ is the unique optimal primal-dual solution to the convex program \eqref{eq:constrained_cvx_ineq}.

\begin{proposition}\label{th:exp-stability-primal-dual}
Let Assumption \ref{ass:primal_dual_exp_assumption} hold.
Then the dynamics~\eqref{eq:regularized_pddyn} are globally exponentially stable, given any~$\eta>0$ and $\alpha >0$ that satisfy~$2\eta> l\alpha+\frac{\mu}{\kappa \alpha}$. More precisely, given the (unique) saddle point $z_\star$ and any initial point $z(0)\in\mathbb{R}^n \times \mathbb{R}^m_{\ge 0}$ with $z=(x,y)$,
\[
\|z(t)-z_\star\|\leq K\|z(0)-z_\star\|e^{-\mu t}
\]
holds with $K:=\max\{2,2\sigma\alpha^2+1\}$.
\end{proposition}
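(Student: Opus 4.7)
The plan is to transfer the exponential convergence of the preconditioned primal-dual dynamics \eqref{eq:transformed_pddyn} in the $(u,y)$ coordinates (already established at rate $\mu$ in the discussion preceding the proposition) to the original $(x,y)$ coordinates through the linear change of variables $x = u - \alpha A^T y$. The key facts I would rely on from the excerpt are: (i) the choice $2\eta > l\alpha + \mu/(\kappa\alpha)$ guarantees $(2\eta\alpha - l\alpha^2)\kappa > \mu$, so the rate $c = \min\{\mu,(2\eta\alpha - l\alpha^2)\kappa\} = \mu$; (ii) Theorem~\ref{th:general_exponential-convergence_proj} together with the strong convexity-strong concavity bounds \eqref{eq:conditioned_primal_dual_strong} for $\tilde L(u,y)$ gives the $(u,y)$-estimate $\|w(t) - w_\star\| \le \|w(0) - w_\star\|e^{-\mu t}$; and (iii) any trajectory $(x(t),y(t))$ of \eqref{eq:regularized_pddyn} corresponds to a trajectory $(u(t),y(t))$ of \eqref{eq:transformed_pddyn} via $x(t) \equiv u(t) - \alpha A^T y(t)$, with saddle points related by $x_\star = u_\star - \alpha A^T y_\star$.

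Given these, the remaining task is to establish a two-sided norm equivalence between $z-z_\star = (x-x_\star, y-y_\star)$ and $w-w_\star = (u-u_\star, y-y_\star)$. Writing $x - x_\star = (u - u_\star) - \alpha A^T(y - y_\star)$ and using $\|A^T \Delta y\|^2 \le \sigma\|\Delta y\|^2$ from Assumption~\ref{ass:primal_dual_exp_assumption}, together with the elementary bound $\|a+b\|^2 \le 2\|a\|^2 + 2\|b\|^2$, I obtain
\begin{equation*}
\|z(t) - z_\star\|^2 \le 2\|u(t)-u_\star\|^2 + (2\sigma\alpha^2+1)\|y(t)-y_\star\|^2 \le K\,\|w(t) - w_\star\|^2,
\end{equation*}
with $K = \max\{2, 2\sigma\alpha^2+1\}$. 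An identical argument with the roles of $x$ and $u$ swapped, using $u - u_\star = (x - x_\star) + \alpha A^T(y - y_\star)$, yields the companion inequality $\|w(0) - w_\star\|^2 \le K\,\|z(0) - z_\star\|^2$.

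Chaining these together with the $(u,y)$-rate bound gives
\begin{equation*}
\|z(t) - z_\star\|^2 \le K\,\|w(t) - w_\star\|^2 \le K e^{-2\mu t}\,\|w(0) - w_\star\|^2 \le K^2 e^{-2\mu t}\,\|z(0) - z_\star\|^2,
\end{equation*}
and taking square roots delivers $\|z(t) - z_\star\| \le K\,\|z(0) - z_\star\|e^{-\mu t}$ with the stated $K$. There is no substantive obstacle: the proof is essentially a coordinate-change reduction plus a standard two-sided norm equivalence. The only bookkeeping is to note that the equivalence constant is applied once at time $t$ and once at time $0$, and that taking square roots turns the accumulated factor $K^2$ into $K = \max\{2, 2\sigma\alpha^2+1\}$, matching the statement exactly.
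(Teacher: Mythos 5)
Your proposal is correct and follows essentially the same route as the paper: invoke the rate-$\mu$ exponential stability of the preconditioned dynamics in $(u,y)$, then transfer it to $(x,y)$ via the two-sided norm equivalence obtained from $x-x_\star=(u-u_\star)-\alpha A^T(y-y_\star)$, the bound $\|a+b\|^2\le 2\|a\|^2+2\|b\|^2$, and $\|A^T\Delta y\|^2\le\sigma\|\Delta y\|^2$. The only cosmetic difference is that you chain the inequalities in squared norms and take a square root at the end, whereas the paper carries $\sqrt{K}$ factors through norm-level inequalities; the constants and logic are identical.
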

% \enrique{What's $\sigma$ here?}
%{\color{blue} The largest singular value of $AA^T$ defined in Assumption~\ref{ass:primal_dual_exp_assumption}.}
\begin{proof}
We start with the following inequalities:
\beq\label{eq:relaxation_1}
\begin{aligned}
&\ \| x(t) -x_\star \|^2 \\
=&\ \|( u(t) -u_\star) -\alpha A^T( y(t)-y_\star)  \|^2 \\
\le &\ 2\|  u(t) -u_\star\|^2  + 2\alpha^2 \|A^T ( y(t)-y_\star) \|^2 \\
\le &\  2\|  u(t) -u_\star \|^2 + 2\sigma \alpha^2 \| y(t)-y_\star \|^2
\end{aligned}
\eeq
and
\beq\label{eq:relaxation_2}
\begin{aligned}
& \ \|  u(t) -u_\star \|^2 \\
=&\ \| ( x(t) -x_\star ) +\alpha A^T ( y(t)-y_\star) \|^2 \\
\le & \ 2\|  x(t) -x_\star \|^2  + 2\alpha^2 \|A^T ( y(t)-y_\star) \|^2 \\
\le & \ 2\| x(t) -x_\star \|^2 + 2\sigma\alpha^2  \|y(t)-y_\star\|^2 \, .
\end{aligned}
\eeq
Recall that here $\sigma$ is the largest eigenvalue of $AA^T$ from Assumption~\ref{ass:primal_dual_exp_assumption}.
Then given $w=(u,y)$, we are able to derive
\[
\begin{aligned}
& \ \|z(t)-z_\star\|\\
 \le & \  \sqrt{ 2\| u(t)-u_\star\|^2 + (2\sigma\alpha^2 +1) \|y(t)-y_\star\|^2 } \\
 \le & \  \sqrt{K} \| w(t)-w_\star\| \\
 \le & \  \sqrt{K} \|w(0)-w_\star \| e^{-\mu t} \\
 \le & \  \sqrt{K} \sqrt{ 2\| x(0)-x_\star\|^2 + (2\sigma\alpha^2 +1) \|y(0)-y_\star\|^2 } e^{-\mu t}  \\  
 \le & \  K \|z(0)-z_\star \| e^{-\mu t}  \, ,
\end{aligned}
\]
where the first inequality uses \eqref{eq:relaxation_1}, the third inequality uses the exponential stability of the preconditioned primal-dual dynamics \eqref{eq:transformed_pddyn}, and the fourth inequality uses \eqref{eq:relaxation_2}.
\end{proof}

\begin{remark}
   Despite the strongly convex-linear Lagrangian $L(x,y)$, the coordinate transformation 
   $$
   \begin{bmatrix}
       u\\y
   \end{bmatrix}
    = \begin{bmatrix}
       I & \alpha A^T \\ 0 & I
   \end{bmatrix}
   \begin{bmatrix}
       x\\y
   \end{bmatrix}
   $$
   renders strong convexity-strong concavity of the function in the $(u,y)$ space, and thus establishes the exponential stability of the corresponding saddle flow dynamics. When the trajectories are converted back to the original $(x,y)$ space, they are still exponentially convergent with the same rate, but the distance to equilibrium is not monotonically decreasing due to the constant $K$.
\end{remark}

\subsection{Reduced Primal-Dual Dynamics}\label{sec:reduced_primal_dual}

To show the versatility of our condition for saddle flows' exponential convergence, we further propose a reduced primal-dual algorithm to solve the convex program \eqref{eq:constrained_cvx_ineq} in Section~\ref{ssec:exp_convergence_regularized_pdd} if it admits the following separable structure:  
\begin{subequations}\label{eq:sep_cvx}
\begin{eqnarray}
\min_{x\in\mathbb{R}^{n}} && f(x) := f_s(x_s) + f_c(x_c) \\
\mathrm{s.t.} && A_s x_s + A_c x_c - b \le 0 \ : \ y \in \mathbb{R}^m_{\ge 0}
\end{eqnarray}
\end{subequations}
where $f : \mathbb{R}^n \mapsto \mathbb{R}$ is additively separable into $f_s:\mathbb{R}^{n_s} \mapsto \mathbb{R}$ and $f_c:\mathbb{R}^{n_c} \mapsto \mathbb{R}$. 
Accordingly, $x_s\in\mathbb{R}^{n_s}$ and~$x_c\in\mathbb{R}^{n_c}$ are the two separated subsets of variables with~$x=[x_s^T, x_c^T]^T \in\mathbb{R}^n$ and~$n=n_s+n_c$. 
Both~$f_s(x_s)$ and $f_c(x_c)$ are assumed to be continuously differentiable and convex. $A\in\mathbb{R}^{m\times n}$ and $b\in\mathbb{R}^m$ are given. $A_s\in\mathbb{R}^{m\times n_s}$ and~$A_c\in\mathbb{R}^{m\times n_c}$ are the submatrices of~$A=[A_s,A_c]$ that consist of the columns corresponding to~$x_s$ and~$x_c$, respectively. 

We make the following assumption to proceed.
\begin{assumption}\label{ass:reduced_primal_dual_dynamics}
  The function~$f_s(x)$ is~$\mu_s$-strongly convex with~$l_s$-Lipschitz gradient, i.e.,~$l_sI \succeq \nabla^2 f_s(x) \succeq \mu_sI$ wherever~$\nabla^2 f_s(x)$ is defined. Similarly,~$l_cI \succeq \nabla^2 f_c(x) \succeq \mu_cI$ holds wherever $\nabla^2 f_c(x)$ is defined. 
  The matrix~$A_s$ is full row rank with~$\sigma_s I\succeq A_sA_s^T\succeq \kappa_s I$. 
  %where~$\sigma_s:=\lambda_{\rm max}(A_sA_s^T)~$ and~$\kappa_s:=\lambda_{\rm min}(A_sA_s^T)$ are the largest and smallest eigenvalues of~$A_sA_s^T$, respectively.
\end{assumption}
%Note that Assumption~\ref{ass:reduced_primal_dual_dynamics} implicitly requires~$n_s \ge m$.
\noindent
Instead of introducing regularization, we use the standard Lagrangian
\begin{equation}
    L(x,y) := f_s(x_s) + f_c(x_c) + y^T( A_s x_s + A_c x_c - b )\, ,
\end{equation}
but minimize it over~$x_s$ to attain a reduced Lagrangian 
\begin{equation}
\begin{aligned}
    \bar L (x_c,y)  := & \min_{x_s\in\mathbb{R}^{n_s}} L(x,y) \\ 
     = & f_s(\bar x_{s}(y)) + f_c(x_{c}) + y^T( A_s \bar x_{s}(y) + A_c x_c - b )
\end{aligned}
\end{equation}
with~$\bar x_{s}(y)$ being the unique minimizer given~$y$ such that 
\begin{equation}\label{eq:partial_minimizer_Lagrangian}
    \nabla f_s (\bar x_{s}(y))  + A_s^T y=0
\end{equation}
holds.

The reduced Lagrangian $\bar L(x_c,y)$ can be exploited due to the properties below.
\begin{lemma}\label{lm:convex_concave_reduced_Lagrangian}
Let Assumption~\ref{ass:reduced_primal_dual_dynamics} hold. Then $\bar L(x_c,y)$ is strongly convex-strongly concave with the Lipschitz gradient:
\begin{subequations}
\begin{align}
    \nabla_{x_c} \bar L(x_c,y) & =  \nabla f_c(x_c) + A_c^T y \, ,\\
    \nabla_y \bar L(x_c,y) & = A_s \bar x_{s}(y) +A_c x_c -b \, .
\end{align}
\end{subequations}
Moreover, a point~$(x_\star,y_\star)$ is a saddle point of~$L(x,y)$ if and only if~$(x_{c\star},y_\star)$ is a saddle point of~$\bar L(x_c,y)$ with~$ [\bar x_{s}^T(y_\star), x^T_{c\star}]^T = x_\star$.
\end{lemma}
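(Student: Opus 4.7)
The plan is to first compute the gradients and second-order derivatives of $\bar L(x_c,y)$ using the envelope theorem and the implicit function theorem, then use these to derive strong convexity--strong concavity and the Lipschitz gradient property, and finally establish the saddle-point correspondence via first-order optimality conditions.

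First, I would compute the gradients. Since $\nabla^2 f_s \succeq \mu_s I \succ 0$ under Assumption~\ref{ass:reduced_primal_dual_dynamics}, for every $y$ the inner minimization $\min_{x_s} L(x_s,x_c,y)$ admits a unique minimizer $\bar x_s(y)$ characterized by \eqref{eq:partial_minimizer_Lagrangian} (note the minimizer is independent of $x_c$). By Danskin's envelope theorem, differentiating $\bar L(x_c,y)=L(\bar x_s(y),x_c,y)$ yields
\[
\nabla_{x_c} \bar L(x_c,y) = \nabla f_c(x_c) + A_c^T y, \quad \nabla_y \bar L(x_c,y) = A_s \bar x_s(y) + A_c x_c - b,
\]
where the terms involving $\partial_y \bar x_s(y)$ cancel because $\nabla_{x_s} L(\bar x_s(y),x_c,y) = 0$.

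Next, I would derive the Hessian blocks of $\bar L$. Clearly $\partial^2_{x_c x_c}\bar L = \nabla^2 f_c(x_c) \succeq \mu_c I$, giving $\mu_c$-strong convexity in $x_c$. For the $y$-block, applying the implicit function theorem to \eqref{eq:partial_minimizer_Lagrangian} yields
\[
\partial_y \bar x_s(y) = -\left[\nabla^2 f_s(\bar x_s(y))\right]^{-1} A_s^T,
\]
so that
\[
\partial^2_{yy}\bar L(x_c,y) = A_s \partial_y \bar x_s(y) = - A_s \left[\nabla^2 f_s(\bar x_s(y))\right]^{-1} A_s^T.
\]
Using $\nabla^2 f_s \preceq l_s I$ together with $A_s A_s^T \succeq \kappa_s I$, this Schur-complement-style expression is upper bounded by $-\tfrac{\kappa_s}{l_s} I \prec 0$, establishing $\tfrac{\kappa_s}{l_s}$-strong concavity in $y$. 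The Lipschitz gradient property then follows from $\nabla f_c$ being $l_c$-Lipschitz, $A_c^T$ being bounded, and $\bar x_s(y)$ being Lipschitz in $y$ with constant $\|A_s\|/\mu_s$ (obtained from the bound $\|\partial_y \bar x_s(y)\|\le \|A_s\|/\mu_s$).

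Finally, for the saddle-point correspondence, I would argue through first-order conditions. A point $(x_\star,y_\star)\in\mathbb{R}^n\times\mathbb{R}^m_{\ge 0}$ is a saddle point of $L$ iff
\[
\nabla f_s(x_{s\star})+A_s^T y_\star = 0, \quad \nabla f_c(x_{c\star})+A_c^T y_\star = 0, \quad [A_s x_{s\star}+A_c x_{c\star}-b]^+_{y_\star} = 0.
\]
The first equation is exactly $x_{s\star} = \bar x_s(y_\star)$. Substituting this into the remaining two and using the gradient formulas above shows that these are precisely the first-order optimality conditions for $(x_{c\star},y_\star)$ being a saddle point of $\bar L(x_c,y)$. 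Conversely, given a saddle point $(x_{c\star},y_\star)$ of $\bar L$, defining $x_{s\star}:=\bar x_s(y_\star)$ and reversing the argument produces a saddle point of $L$. The main obstacle I anticipate is being careful with the projection and non-negativity of $y$ when writing the equivalence through the complementarity condition, but convexity--concavity of both $L$ and $\bar L$ (established above for $\bar L$) ensures that the first-order conditions are both necessary and sufficient, so the equivalence is clean.
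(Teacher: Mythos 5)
Your proposal is correct and follows essentially the same route as the paper: the paper likewise differentiates the inner optimality condition \eqref{eq:partial_minimizer_Lagrangian} to obtain the Jacobian of $\bar x_s(y)$ (writing $\mathbf{J}_s=[\nabla^2 f_s]^{-1}$ implicitly via $\nabla^2 f_s\,\mathbf{J}_s\equiv I$), computes the same gradient via chain-rule cancellation, derives the identical Hessian blocks and bounds $\nabla^2 f_c\succeq\mu_c I$ and $-A_s[\nabla^2 f_s]^{-1}A_s^T\preceq-\tfrac{\kappa_s}{l_s}I$, and establishes the saddle-point correspondence through the equivalence of the projected first-order conditions. The only differences are cosmetic (invoking Danskin explicitly and spelling out the Lipschitz constant of $\bar x_s$).
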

\begin{proof}
It follows from~\eqref{eq:partial_minimizer_Lagrangian} and the strict convexity of~$f_s$ that 
\begin{equation}
    \bar x_{s}(y) = ( \nabla f_s )^{-1} (-A_s^T y) 
\end{equation}
is Lipschitz. Denote the Jacobian matrix of $( \nabla f_s )^{-1}$ as $\textbf{J}_{s}$ wherever it is defined. 
Taking the derivative with respect to $y$ on both sides of \eqref{eq:partial_minimizer_Lagrangian} leads to the following:
\begin{equation}\label{eq:psd_Jacobian}
    ( - \nabla^2 f_s (\bar x_s(y)) \textbf{J}_s + I) A_s^T \equiv 0 
 \  \iff  \  \nabla^2 f_s (\bar x_s(y)) \textbf{J}_s  \equiv I  
\end{equation}

Then the gradient of~$\bar L(x_c,y)$ can be obtained using the chain rule:
\begin{subequations}
\beq
        \nabla_{x_c} \bar L(x_c,y)  =  \nabla f_c(x_c) + A_c^T y \, , \\
\eeq
\beq
\begin{aligned}
    \nabla_y \bar L(x_c,y) &  = - A_s \textbf{J}^T_{s}\nabla f_s(\bar x_{s}(y))  - A_s\textbf{J}^T_{s} A_s^T y  \\
    & \ \ \, \  + A_s \bar x_{s}(y) +A_c x_c -b \\
    & = A_s \bar x_{s}(y) +A_c x_c -b \, .
\end{aligned}
\eeq
\end{subequations}
This allows us to further compute the second-order partial derivatives as 
\begin{subequations}\label{eq:second-order_deriv_reduced}
\beq
        \partial^2_{x_c x_c} \bar L(x_c,y)  =  \nabla^2 f_c(x_c)  \succeq \mu_c  \, ,
\eeq
\beq
    \partial^2_{yy} \bar L(x_c,y)  =  -A_s\textbf{J}^T_{s} A^T_s \preceq -\frac{1}{l_s}A_s A^T_s\preceq -\frac{\kappa_s}{l_s} \, ,
\eeq
\end{subequations}
where $\textbf{J}^T_{s} \succeq \frac{1}{l_s}$ follows from \eqref{eq:psd_Jacobian} with $\nabla^2_{x_s} f_s(x_s)  \preceq l_s$. 

Given the strong convexity-strong concavity of $\bar L(x_c,y)$ above, the correspondence between saddle points of $L(x,y)$ and $\bar L(x_c,y)$ is straightforward from
\begin{align*}
\left\{
   \begin{aligned}
   & \nabla_x L(x_\star,y_\star)  =0 \\&
   \left[\nabla_y L(x_\star,y_\star)\right]^+_y  =0 
   \end{aligned} \right. 
 \iff
  \left\{
   \begin{aligned}
   & \nabla_{x_c} \bar L (x_{c\star},y_\star)  = 0 \\
    & \nabla_y \bar L \left[(x_{c\star},y_\star)\right]_y^+  = 0 
   \end{aligned} \right.  
% &\nabla_x L(x_\star,y_\star) = 0 \iff \nabla_u S(u_\star,y_\star) = 0 \\
% &\nabla_y L(x_\star,y_\star) = 0 \iff \nabla_y S(u_\star,y_\star) = 0 
\end{align*}
\end{proof}

Lemma~\ref{lm:convex_concave_reduced_Lagrangian} allows us to focus on the projected saddle flow dynamics \eqref{eq:saddle_flow_proj} of $\bar L(x_c,y)$:
\begin{subequations}\label{eq:reduced_primal_dual_dynamics}
\begin{align}
    \dot x_c & = - \nabla_{x_c} \bar L(x_c,y)\, , \\
    \dot y & = \left[+ \nabla_y \bar L(x_c,y)\right]_y^+\, ,
\end{align}
\end{subequations}
since such reduced primal-dual dynamics \eqref{eq:reduced_primal_dual_dynamics} converge exponentially to its unique saddle point according to Theorem~\ref{th:general_exponential-convergence_proj}.
In other words, given the unique saddle point $\bar z_\star$ and any initial point $\bar z(0)\in \mathbb{R}^{n_c}\times \mathbb{R}^m_{\ge 0}$ with $\bar z:=(x_c,y)$, 
\[
\|\bar z(t)-\bar z_\star\|\leq \|\bar z(0)-\bar z_\star\|e^{-ct}
\]
holds with the rate~$$c=\min \left \{  \mu_c , \frac{\kappa_s}{l_s} \right \}>0\, .$$
\begin{remark}
The key enabler for the reduced primal-dual dynamics \eqref{eq:reduced_primal_dual_dynamics} is the separability of~$x$ into~$x_s$ and~$x_c$ such that all the necessary assumptions hold. It is possible that multiple ways of separation exist, then the convergence rate could be further optimized based on how the variables are split.
In this case, comparing the reduced primal-dual dynamics \eqref{eq:reduced_primal_dual_dynamics} with the preconditioned primal-dual dynamics  \eqref{eq:transformed_pddyn} might also suggest a better option between the two since in general neither of them dominates the other in terms of the convergence rate. 
\end{remark}

\section{Simulation Results}\label{sec:simulation}
\begin{figure*}[htbp]
    \centering

    \begin{subfigure}[t]{0.25\textwidth}
        \centering
        \includegraphics[width=\linewidth]{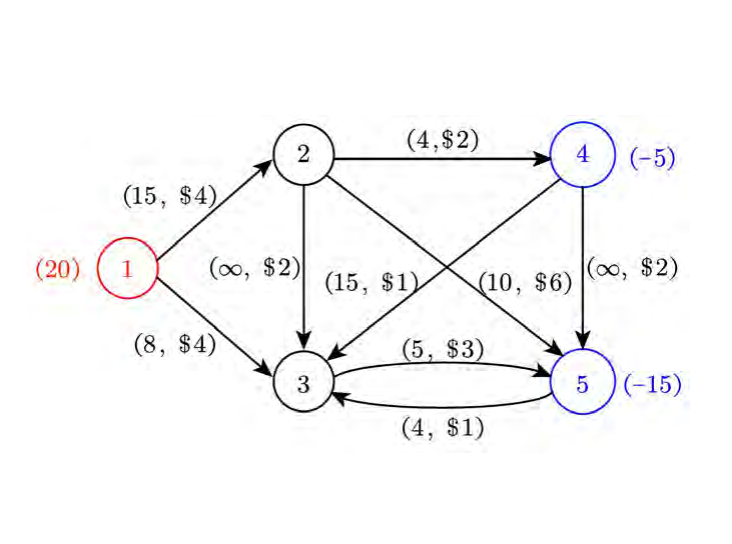}
        \caption{}
        \label{fig: 1a}
    \end{subfigure}%
    \hfill
    \begin{subfigure}[t]{0.25\textwidth}
        \centering
        \includegraphics[width=\linewidth]{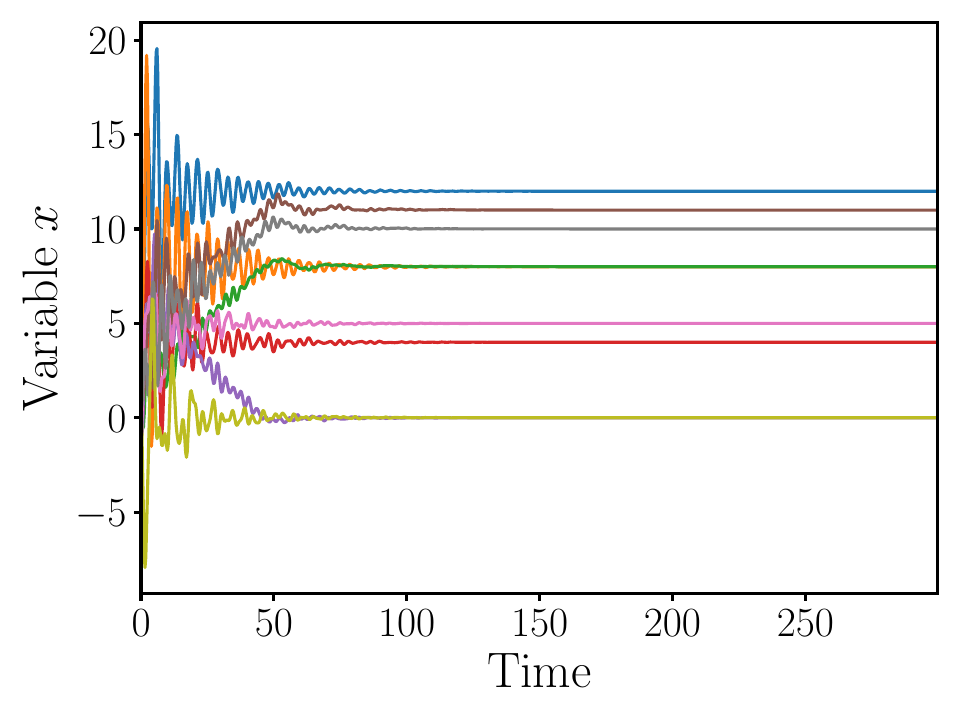}
        \caption{}
        \label{fig: 1b}
    \end{subfigure}%
    \hfill
    \begin{subfigure}[t]{0.25\textwidth}
        \centering
        \includegraphics[width=\textwidth]{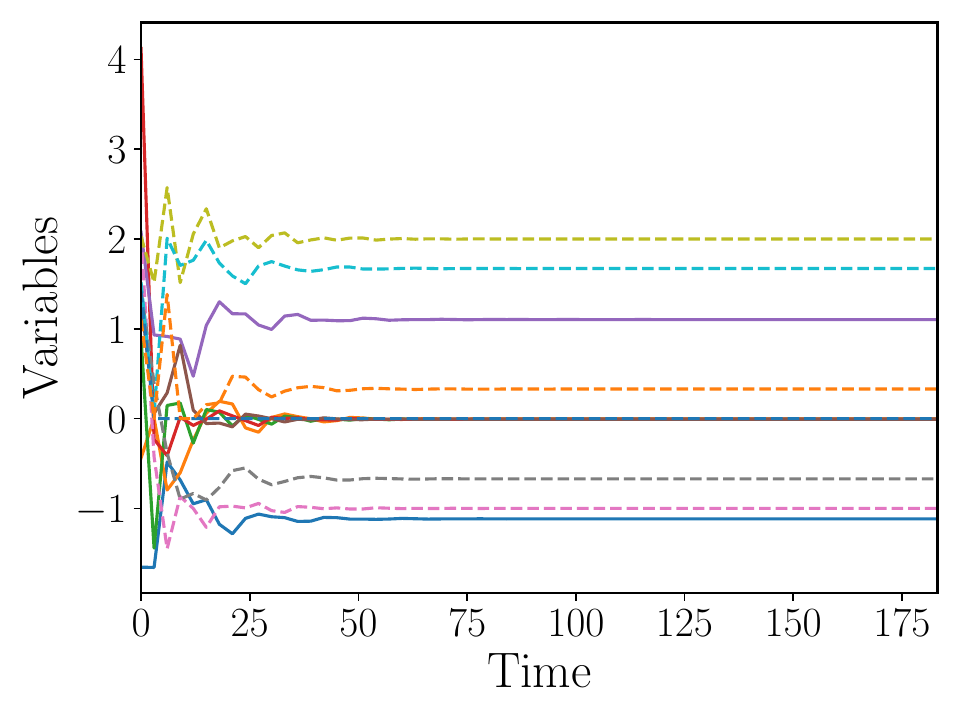}
        \caption{}
        \label{fig: 2a}
    \end{subfigure}%
    \hfill
    \begin{subfigure}[t]{0.25\textwidth}
        \centering
        \includegraphics[width=\linewidth]{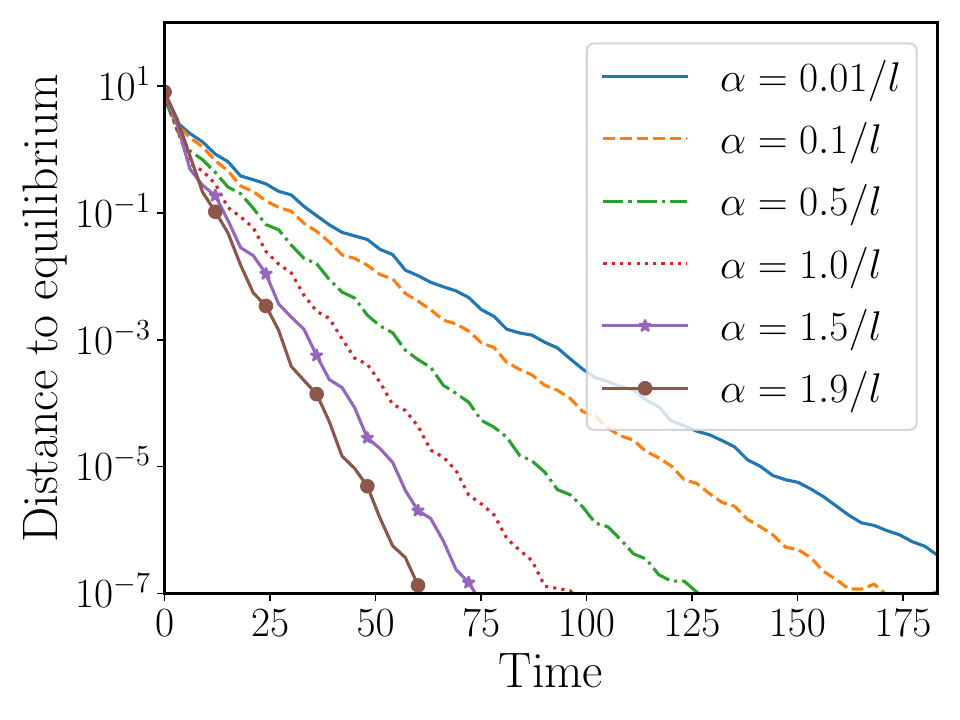}
        \caption{}
        \label{fig: 2b}
    \end{subfigure}

    \caption{(a) Network specification. The nodes are parameterized by injection (positive) or withdrawal (negative), and the edges are parameterized by capacity and unit cost. (b) Asymptotic convergence of trajectories for primal variable~$x$ with $\rho=0.05$. (c) Exponential convergence of trajectories for variables $u$ (solid lines) and $v$ (dashed lines) with $\alpha = \frac{0.01}{l}$. (d) Exponential decay of distance to equilibrium $\sqrt{\Vert u(t) - u_\star \Vert^2 + \Vert v(t) - v_\star \Vert^2  }$ with varying $\alpha$.}
    \label{fig:simulation-results}
\end{figure*}

\subsection{Network Flow Optimization}
We apply the augmented primal-dual dynamics \eqref{eq:reg-saddle-flow_proj} to a network flow optimization problem, which is commonly formulated as a linear program.
Consider a minimum-cost flow problem with a network specified in Fig.~\ref{fig: 1a}.
Let $\mathcal{V}$ and $\mathcal{E}$ denote the sets of the nodes and the directed edges, respectively.
$x:=(x_{ij},{(i,j)\in\mathcal{E}})\in\mathbb{R}^{|\mathcal{E}|}$ is the variable defined as the amount of flow on each edge $(i,j)$ from node~$i$ to node~$j$.
$c\in\mathbb{R}^{|\mathcal{E}|}$ and $b\in\mathbb{R}^{2|\mathcal{E}|}$ denote the unit cost of flow transmission and the capacity for all the edges, respectively. Note that $b$ is the concatenation of upper bounds (given in Fig.~\ref{fig: 1a}) and lower bounds (set to 0).
$d\in\mathbb{R}^{|\mathcal{V}|}$ denotes the nodal (net) injection.
Then given the incidence matrix $B\in\mathbb{R}^{|\mathcal{V}|\times|\mathcal{E}|}$ and the matrix $A:=\textrm{diag}(I_{|\mathcal{E}|},-I_{|\mathcal{E}|})$,
the minimum-cost flow problem is formulated as follows:
\begin{subequations}\label{eq:minimum-cost}
\begin{eqnarray}
         \min\limits_{x\in\mathbb{R}^{|\mathcal{E}|}} && c^Tx   \\
         \text{s.t.} && Bx-d=0 \ :  \ y^E\in\mathbb{R}^{|\mathcal{V}|}\\
         && Ax-b\leq 0 \ :  \ y^I\in\mathbb{R}^{2|\mathcal{E}|}_{\ge 0}
\end{eqnarray}
\end{subequations}
where the equality constraints enforce the flow conservation while the inequality constraints enforce the flow capacity.

Given the linear programming formulation~\eqref{eq:minimum-cost}, the augmented primal-dual dynamics~\eqref{eq:reg-saddle-flow_proj} can be applied to attain an optimal solution with asymptotic convergence guarantee. Note the projection is only defined on $y^I$.
Fig.~\ref{fig: 1b} shows three stages of the convergence process.
Initially, the trajectories exhibit large-amplitude oscillations in the first 10 seconds, as the system searches around for the region of optimal solutions.
The oscillations decay over the following 40 seconds. After approximately 50 seconds, the trajectories start to center around the equilibrium, and finally asymptotically converge to an optimal solution of the minimum flow cost problem \eqref{eq:minimum-cost}. % \enrique{Can you state the time duration of each stage above?}
\subsection{Lasso Regression}

We use a classical Lasso regression problem to test the performance of an algorithm that combines the preconditioning method in Section~\ref{ssec:exp_convergence_regularized_pdd} with proximal regularization. Consider the following problem
\begin{equation}\label{eq:Lasso}
    \min_{\hat x\in\mathbb{R}^n} \ \hat f(\hat x)+\lambda\Vert \hat x\Vert_1
\end{equation}
where $\lambda$ is a constant. 
% effect rather than definition
$\hat f:\mathbb{R}^n \mapsto \mathbb{R}$ is convex with $l$-Lipschitz gradient, i.e.,~$lI \succeq \nabla^2 \hat f(\hat x) \succeq 0$ wherever~$\nabla^2 \hat f(\hat x)$ is defined.
% \begin{assumption}\label{ass:lasso_assumption}
%  The function~$f(x)$ is convex with $l$-Lipschitz gradient, i.e.,~$lI \succeq \nabla^2 f(x) \succeq 0$ whenever~$\nabla^2 f(x)$ is defined.
% \end{assumption}

To handle the non-smooth term~$\Vert \hat x\Vert_1$, we define $\bar x_i^+:=\max\{\hat x_i,0\}$ and~$\bar x_i^-=\max\{-\hat x_i,0\}$ for $\forall i=1,2,\dots,n$, such that
\begin{subequations}\label{eq: change of x}
    \begin{align}
        \hat x_i = \bar x_i^+ - \bar x_i^-, \\
        \vert \hat x_i\vert = \bar x_i^+ + \bar x_i^-,
    \end{align}
\end{subequations}
hold. 
% equivalence
Denote $\bar x^{+}:= (\bar x_i^+,i=1,2,\dots,n) \in\mathbb{R}^n$, $\bar x^{-}:= (\bar x_i^-,i=1,2,\dots,n) \in\mathbb{R}^n$, and $\bar x := [(\bar x^{+})^T, (\bar x^{-})^T]^T$.
The Lasso regression problem~\eqref{eq:Lasso} can be transformed equivalently into
\begin{subequations}\label{eq:modified_Lasso}
    \begin{eqnarray}
    \min_{\hat x\in\mathbb{R}^n,\ \bar x\in\mathbb{R}^{2n}} && \hat f(\hat x) + \lambda\mathbf{1}^T\bar x\\
    \mathrm{s.t.} \ \ \ \ && \hat x+C \bar x = 0 \ : \  \hat y \in \mathbb{R}^{n}\\
    && \bar x\geq 0 \ : \ \bar y \in \mathbb{R}^{2n}_{\ge 0}
    \end{eqnarray}
\end{subequations}
with~$C=[-I_n, I_n]$.
Given~$x := [\hat x^T,\bar x^T]^T$ and $y := [\hat y^T,\bar y^T]^T$,
we can define its Lagrangian as
\begin{equation}\label{eq:Lasso_Lagrangian}
    \begin{aligned}
        L(x,y) :&=  \hat f(\hat x) + \lambda\mathbf{1}^T \bar x + \hat y^T(\hat x+C\bar x)-\bar y ^T \bar x\\
        &= \underbrace{\hat f(\hat x) + \lambda\mathbf{1}^T\bar x}_{=:f(x)} + y^T\underbrace{\begin{bmatrix}
            I_n & C\\
            0 & -I_{2n}
        \end{bmatrix}}_A x \, .
    \end{aligned}
\end{equation}
% , we attain the Lagrangian gievn by
% \begin{align}
%     \tilde L(u,y)&= g(u) +  v^TCu,
% \end{align}
% where $g(s)$. 
Note that the matrix~$A$ is full row rank. Thus we bound the eigenvalues of $AA^T$ by $\sigma I\succeq   AA^T \succeq \kappa I$.
%Next, we plan to combine the change of variable and the proximal operation together to handle the convex function.

The Lagrangian \eqref{eq:Lasso_Lagrangian} is only convex-linear. %which cannot directly render the exponential convergence for its saddle flow dynamics. 
We propose an algorithm that implements the primal-dual dynamics of a transformed function. The transformation involves two steps: first use the change of variables \eqref{eq:change of var} to attain a convex-strongly concave function and then apply a more general version of proximal regularization on the dual variable $y$ to attain the final function (recall our analysis of the proximal method in Section~\ref{ssec:proximal_reg} showing the ``shift" of strong convexity).
 
%\paragraph{Conditioned Primal-Dual Dynamics}
%The Lagrangian \eqref{eq:Lasso_Lagrangian} is affine in~$v$. In order to increase the concavity, 
More specifically, with the change of variables \eqref{eq:change of var}, the Lagrangian \eqref{eq:Lasso_Lagrangian} is transformed into 
\begin{equation}\label{eq:regularied_Lasso_Lagrangian}
     L^C (u,y): = f(u-\alpha A^Ty) +  y^TAu-\alpha \|A^Ty\|^2 \, .
\end{equation}
%where we replace $y$ by $v$ to reduce confusion in following analysis.
By properly choosing the constant $\alpha>0$ such that $\alpha<\frac{2}{l}$, we can verify the convexity-strong concavity of ${L}^C(u,y)$ with
\begin{subequations}
    \begin{equation}
        \partial^2_{uu} {L}^C(u,y) = \nabla^2 f(u-\alpha A^Ty) \succeq  0 \, ,
    \end{equation}
    \begin{equation}
        \begin{aligned}
            \partial^2_{yy} {L}^C(u,y) &= \alpha^2   A\nabla^2 f(u-\alpha A^Ty)A^T - 2\alpha  A A^T  \\
            &=  A \left( \alpha^2 \nabla^2 f(u-\alpha A^Ty) - 2\alpha I  \right) A^T \\
            &\preceq  A ( l\alpha^2 -2\alpha)A^T \\
            & \preceq -\left(2{\alpha} - l\alpha^2\right) \kappa I \\
            & \prec 0 \, .
        \end{aligned}
    \end{equation}
\end{subequations}

In terms of $L^C(u,y)$, we apply the following proximal method on $y$
to attain the final transformed function:
\begin{small}
\begin{equation}\label{eq:Lasso_saddle}
\begin{aligned}L^P(u,v):=&\max\limits_{y\in\mathbb{R}^n\times\mathbb{R}^{2n}_{\geq 0}} \big\{ {L}^C(u,y)-\frac{\rho}{2}\|y-v\|^2 \big \} \\
        =&f(u-\alpha A^T\tilde{y})+\tilde{y}^TAu-\alpha\|A^T\tilde{y}\|^2-\frac{\rho}{2}\|\tilde{y}-{v}\|^2 \, ,
\end{aligned}
\end{equation}
\end{small}%
where~$\tilde{y}$ is a shorthand of $\tilde{y}(u,v)$, denoting the unique maximizer given~$(u,v)$.

We thus employ the saddle flow dynamics of~$L^P(u,v)$, i.e.,
\begin{subequations}\label{eq:saddle_flow_lasso}
\begin{align}
    \dot {u} & = -\nabla f\left(u-\alpha A^T\tilde{y}(u,v)\right)- A^T\tilde{y}(u,v) \, ,\\
    \dot {v} & =  \rho \tilde{y}(u,v)-\rho v \, .
\end{align}
\end{subequations}
to solve the Lasso regression problem \eqref{eq:Lasso}, since whenever \eqref{eq:saddle_flow_lasso} converges to an equilibrium point $(u_\star,v_\star)$, we can easily recover $(x_\star=u_\star - \alpha A^Tv_\star,y_\star=v_\star)$ as an optimal solution to \eqref{eq:Lasso}.
Note that the maximizer $\tilde{y}(u,v)$ can be efficiently computed using numerical methods since $\nabla f (\cdot)$ is strictly monotone.

In particular, we specify $\hat f(\hat x):=\frac{1}{2}\Vert A\hat{x}-b\|^2$, where $A\in\mathbb{R}^{m\times n}$ and $b\in\mathbb{R}^m$ could represent a collection of known input and output data of size $m$, respectively.

Fig.~\ref {fig: 2a} shows the trajectories of \eqref{eq:saddle_flow_lasso} that converge rapidly in approximately 40 seconds. Fig.~\ref{fig: 2b} further shows the exponentially decaying distance to the equilibrium as we implement the dynamics \eqref{eq:saddle_flow_lasso}. Moreover, the decaying rate increases in the tunable constant $\alpha$.

While the more general version of proximal regularization \eqref{eq:Lasso_saddle} goes beyond our analysis in Section~\ref{ssec:proximal_reg} due to the constrained feasible region $\mathbb{R}^n\times \mathbb{R}^{2n}_{\ge0}$ for $y$, the numerical results suggest that the desirable exponential stability of the saddle flow dynamics \eqref{eq:saddle_flow_lasso} may still hold, thus opening the path for future extensions to further relax our conditions.

\section{Conclusion}\label{sec:conclusion}
This paper focuses on the convergence behavior of saddle flow dynamics and provides a unified analysis that leads to two novel conditions for asymptotic and exponential convergence.
The first condition is an observable certificate which, if identified for a convex-concave function, guarantees the saddle flow dynamics to converge asymptotically. It generalizes some existing conditions and further inspires the design of a novel augmented algorithm that only requires minimal assumptions on convexity-concavity for asymptotic convergence and even works for bilinear functions. 
The second condition is the strong convexity-strong concavity of an objective function which suffices to guarantee saddle flows' global exponential stability and provides a convenient lower-bound estimate of the rate of convergence.
The insight also reveals how proximal regularization on a strongly convex-concave function leads to exponential convergence by ``shifting" some of the strong convexity to make the resulting function also strongly concave. 
Our analysis and results can be extended to a projected version of saddle flow dynamics, the trajectories of which are constrained within a closed convex polyhedron.
Such an extension allows the immediate application of our theory to develop and analyze primal-dual algorithms for constrained convex optimization. This leads to three novel algorithm designs and one novel bound on the convergence rate of proximal primal-dual algorithms, highlighting weaker convergence conditions and/or faster convergence rates.
%enlightening four primal-dual algorithms as immediate applications for constrained convex programs.
Finally, we run extensive numerical experiments with our new algorithms applied to solve network flow optimization and Lasso regression. 
The simulations validate our theoretical development and also suggest potential extensions.

%\addtolength{\textheight}{-12cm}   % This command serves to balance the column lengths
                                  % on the last page of the document manually. It shortens
                                  % the textheight of the last page by a suitable amount.
                                  % This command does not take effect until the next page
                                  % so it should come on the page before the last. Make
                                  % sure that you do not shorten the textheight too much.

%%%%%%%%%%%%%%%%%%%%%%%%%%%%%%%%%%%%%%%%%%%%%%%%%%%%%%%%%%%%%%%%%%%%%%%%%%%%%%%%

%%%%%%%%%%%%%%%%%%%%%%%%%%%%%%%%%%%%%%%%%%%%%%%%%%%%%%%%%%%%%%%%%%%%%%%%%%%%%%%%

%%%%%%%%%%%%%%%%%%%%%%%%%%%%%%%%%%%%%%%%%%%%%%%%%%%%%%%%%%%%%%%%%%%%%%%%%%%%%%%%
%\section*{APPENDIX}

%%%%%%%%%%%%%%%%%%%%%%%%%%%%%%%%%%%%%%%%%%%%%%%%%%%%%%%%%%%%%%%%%%%%%%%%%%%%%%%%

%\appendix

%\subsection{Proof of Lemma \ref{lm:Lasso_regularization}}

\bibliographystyle{IEEEtran}
\bibliography{IEEEabrv,mybibfile}

\begin{IEEEbiography}[{\includegraphics[width=1in,height=1.25in,clip,keepaspectratio]{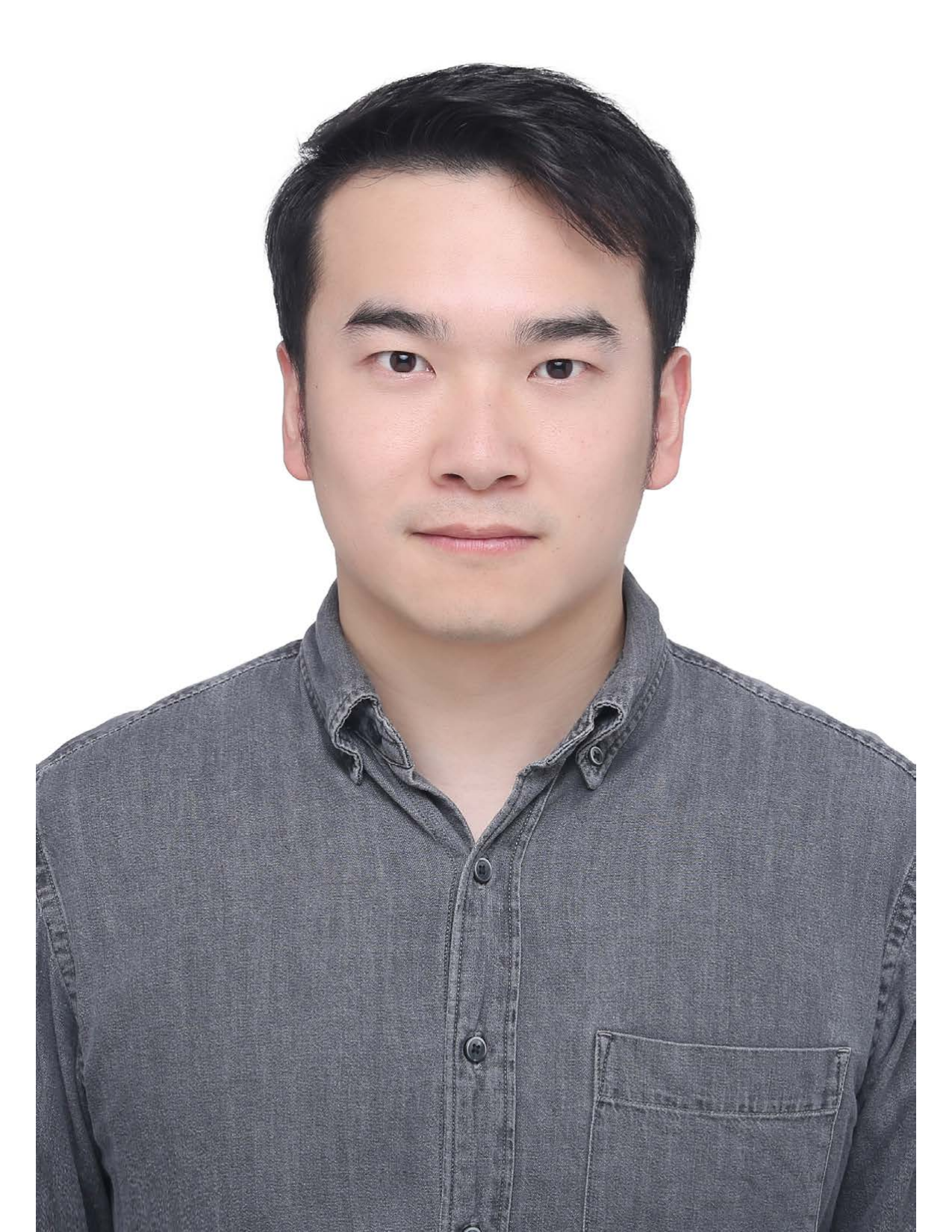}}]{Pengcheng You}
(S’14-M’18) is an Assistant Professor at the Department of Industrial Engineering and Management, Peking University. He also holds a joint appointment at the National Engineering Laboratory for Big Data Analysis and Applications, Peking University. 
Prior to joining PKU, he was a Postdoctoral Fellow at the ECE and ME Departments, Johns Hopkins University. 
He earned his Ph.D. and B.S. degrees both from Zhejiang University, China. 
During the graduate studies, he was a visiting student at Caltech and a research intern at PNNL.
His research interests include control, optimization, reinforcement learning and market mechanism, with main applications in power and energy systems.
\end{IEEEbiography}

\begin{IEEEbiography}
	[{\includegraphics[width=1in,height=1.25in, clip,keepaspectratio]{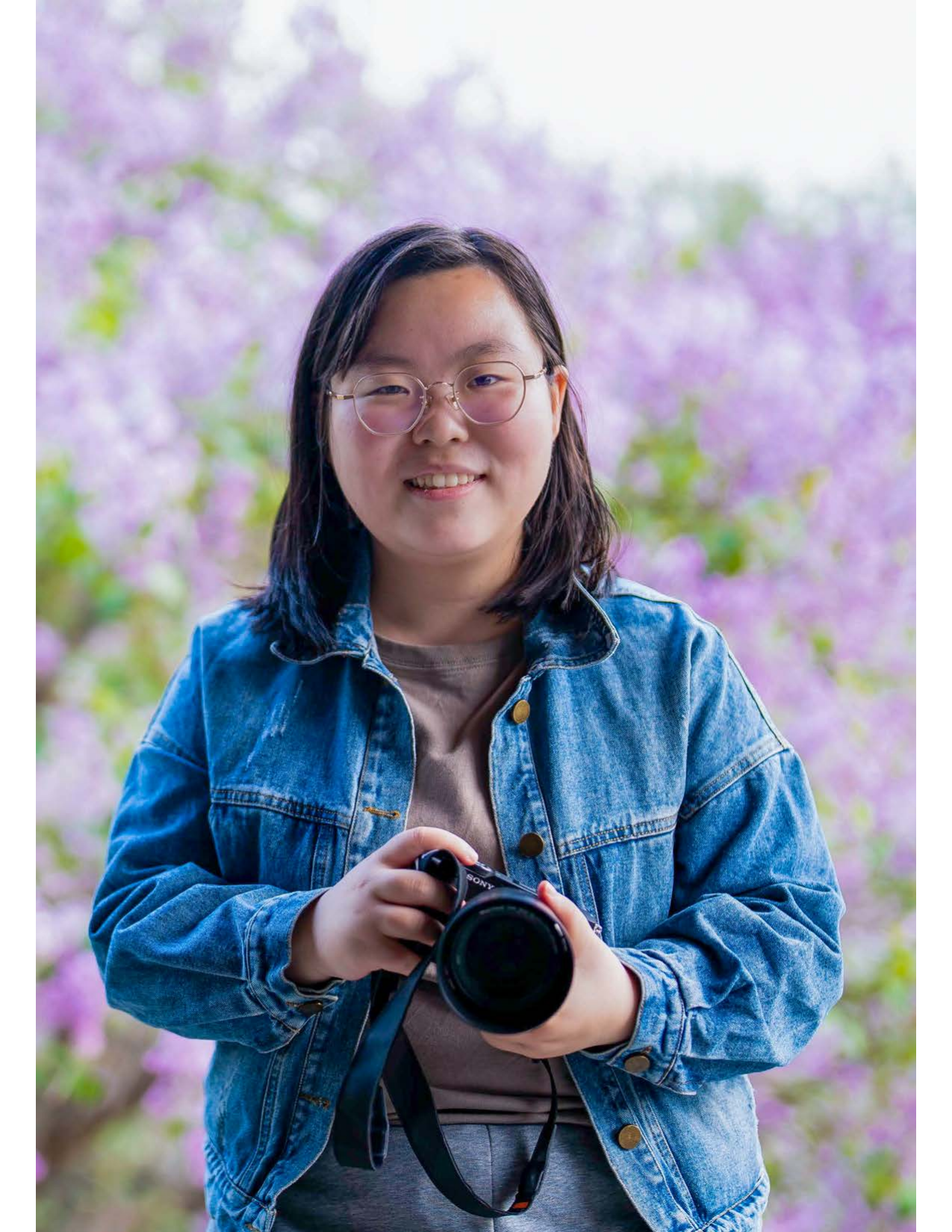}}]{Yingzhu Liu} (S'24) received her B.S. degree in mechanical engineering from %the Department of Mechanics and Engineering Science, College of Engineering, 
    Peking University, Beijing, China, in 2022. She is currently working toward the Ph.D. degree in Dynamical Systems and Control with the Department of Mechanics and Engineering Science, College of Engineering, Peking University, Beijing, China.

    Her research interests include saddle flow dynamics, minimax optimization, control theory, and reinforcement learning.
\end{IEEEbiography}

\begin{IEEEbiography}[{\includegraphics[width=1in,height=1.25in,clip,keepaspectratio]{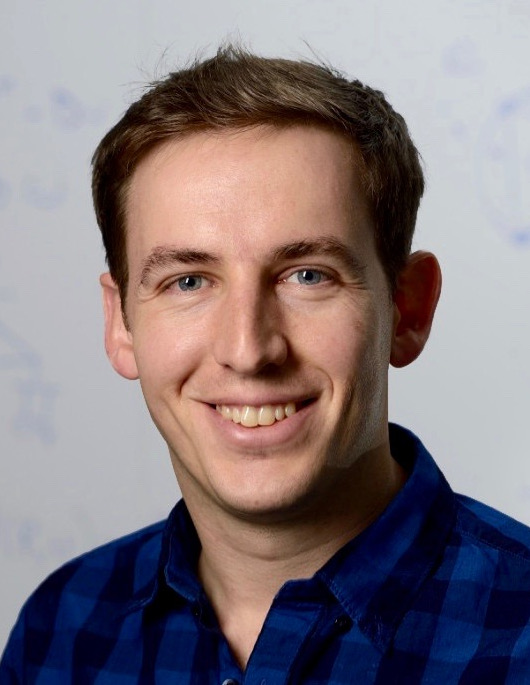}}]
{Enrique Mallada} (S'09-M'13-SM'19) is an Associate Professor of Electrical and Computer Engineering at Johns Hopkins University. He was an Assistant Professor in the same department from 2016 to 2022. Prior to joining Hopkins, he was a Post-Doctoral Fellow in the Center for the Mathematics of Information at Caltech from 2014 to 2016. He received his Ingeniero en Telecomunicaciones degree from Universidad ORT, Uruguay, in 2005 and his Ph.D. degree in Electrical and Computer Engineering with a minor in Applied Mathematics from Cornell University in 2014. 
Dr. Mallada was awarded 
the NSF CAREER award in 2018, 
the ECE Director’s PhD Thesis Research Award for his dissertation in 2014, 
the Center for the Mathematics of Information (CMI) Fellowship from Caltech in 2014, 
and the Cornell University Jacobs Fellowship in 2011. 
His research interests lie in the areas of control, dynamical systems, optimization, machine learning, with applications to engineering networks such as power systems and the Internet.
\end{IEEEbiography}

\end{document}